\newtheorem{theorem}{Theorem}[section]
\newtheorem{lemma}[theorem]{Lemma}
\newtheorem{corollary}[theorem]{Corollary}
\newtheorem{question}[theorem]{Question}
\theoremstyle{definition}
\newtheorem{definition}[theorem]{Definition}
\newtheorem{proposition}[theorem]{Proposition}
\theoremstyle{remark}
\begin{document}

\title[A note on rectifiable spaces]
{A note on rectifiable spaces}

\author{Fucai Lin}
\address{(Fucai Lin): Department of Mathematics and Information Science,
Zhangzhou Normal University, Zhangzhou 363000, P. R. China}
\email{linfucai2008@yahoo.com.cn}
\author{Chuan Liu*}
\address{(Chuan Liu): Department of Mathematics,
Ohio University Zanesville Campus, Zanesville, OH 43701, USA}
\email{liuc1@ohio.edu}
\author{Shou Lin}
\address{(Shou Lin): Institute of Mathematics, Ningde Teachers' College, Ningde, Fujian
352100, P. R. China} \email{linshou@public.ndptt.fj.cn}

\thanks{Supported by the NSFC (No. 10971185) and the Educational Department of Fujian Province (No. JA09166) of China.\\
* corresponding author}

\keywords{rectifiable spaces; paratopological groups; locally compact; Fr$\acute{e}$chet-Urysohn; $k$-networks; sequential spaces; metrizable; compactifications; remainders.}
\subjclass[2000]{54A25; 54B05; 54E20; 54E35}

\begin{abstract}
In this paper, we firstly discuss the question: Is $l_{2}^{\infty}$ homeomorphic to a rectifiable space or a paratopological group? And then, we mainly discuss locally compact rectifiable spaces, and show that a locally compact rectifiable space with
the Souslin property is $\sigma$-compact, which gives an affirmative answer to A.V. Arhangel'ski\v{i}  and M.M. Choban's question [On remainders of rectifiable spaces, Topology Appl.,
157(2010), 789-799]. Next, we show that a rectifiable space $X$ is strongly Fr$\acute{e}$chet-Urysohn if and only if $X$ is an $\alpha_{4}$-sequential space. Moreover, we discuss the metrizabilities of rectifiable spaces, which gives a partial answer for a question posed in \cite{LFC2009}. Finally, we consider the remainders of  rectifiable spaces, which improve some results in \cite{A2005, A2007, A2009, Liu2009}.
\end{abstract}

\maketitle

\section{Introduction}
Recall that a {\it topological group} $G$ is a group $G$ with a
(Hausdorff) topology such that the product maps of $G \times G$ into
$G$ is jointly continuous and the inverse map of $G$ onto itself
associating $x^{-1}$ with arbitrary $x\in G$ is continuous. A {\it
paratopological group} $G$ is a group $G$ with a topology such that
the product maps of $G \times G$ into $G$ is jointly continuous. A
topological space $G$ is said to be a {\it rectifiable space} \cite{C1987}
provided that there are a surjective homeomorphism $\varphi :G\times
G\rightarrow G\times G$ and an element $e\in G$ such that
$\pi_{1}\circ \varphi =\pi_{1}$ and for every $x\in G$ we have
$\varphi (x, x)=(x, e)$, where $\pi_{1}: G\times G\rightarrow G$ is
the projection to the first coordinate. If $G$ is a rectifiable
space, then $\varphi$ is called a {\it rectification} on $G$. It is
well known that rectifiable spaces and paratopological groups are
all good generalizations of topological groups. In fact, for a
topological group with the neutral element $e$, then it is easy to
see that the map $\varphi (x, y)=(x, x^{-1}y)$ is a rectification on
$G$. However, there exists a paratopological group which is not a
rectifiable space; Sorgenfrey line (\cite[Example
1.2.2]{E1989}) is such an example. Also, the 7-dimensional sphere $S_{7}$ is
rectifiable but not a topological group \cite[$\S$ 3]{V1990}.
Further, it is easy to see that paratopological groups and
rectifiable spaces are all homogeneous.

By a remainder of a space $X$ we understand the subspace
$bX\setminus X$ of a Hausdorff compactification $bX$ of $X$.

In section 3, we show that $l_{2}^{\infty}$ is homeomorphic to no rectifiable space or paratopological group, where $l_{2}^{\infty}$ is the separable Hilbert space, which extends a result of T. Banakh in \cite{BT}. In section 4, we mainly discuss locally compact rectifiable spaces, and show that a locally compact and separable rectifiable space is $\sigma$-compact, which give an affirmative answer for a question of A.V. Arhangel'ski\v{i}  and M.M. Choban's. Moreover, we prove that under the set theory assumption a locally compact rectifiable space with the $\alpha_{4}$-properties is metrizable.
In section 5, we show that a rectifiable space $X$ is strongly Fr$\acute{e}$chet-Urysohn if and only if $X$ is an $\alpha_{4}$-sequential space. In section 6, we mainly discuss the metrizability of rectifiable spaces which have a point-countable $k$-network. In section 7, we mainly consider the question: When does a
Tychonoff rectifiable space $G$ have a
Hausdorff compactification $bG$ with a remainder belonging to the class of separable and metrizable spaces?
\maketitle

\section{Preliminaries}
In \cite{PE}, E. Pentsak
studied the topology of the direct limit $X^{\infty}=\underset{\longrightarrow}{\lim}X^{n}$ of the sequence
$$X\subset X\times X\subset X\times X\times X\subset\cdots,$$
where $(X, \star)$ was a ``nice'' pointed space and $X^{n}$ was identified with the subspace $X^{n}\times \{\star\}$ of $X^{n+1}$.

A space $X$ is called an {\it $S_{2}$}-{space} ({\it Arens' space})  if
$X=\{\infty\}\cup \{x_{n}: n\in \mathbb{N}\}\cup\{x_{n}(m): m, n\in
\mathbb{N}\}$ and the topology is defined as follows: Each
$x_{n}(m)$ is isolated; a basic neighborhood of $x_{n}$ is
$\{x_{n}\}\cup\{x_{n}(m): m>k\}$ for some $k\in \mathbb{N}$;
a basic neighborhood of $\infty$ is $\{\infty\}\cup (\bigcup\{V_{n}:
n>k\})$ for some $k\in \mathbb{N}$, where $V_{n}$ is a
neighborhood of $x_{n}$.

T. Banakh define the space $K$ \cite{BT}.

Let $$K=\{(0, 0)\}\cup\{(\frac{1}{n}, \frac{1}{nm}): n,
m\in\mathbb{N}\}\subset \mathbb{R}^{2}.$$ The space $K$ is non-locally
compact and metrizable. Also, the space $K$ is a minimal space with
these properties in the sense that each metrizable non-locally
compact space contains a closed copy of $K$. For convenience, put
$x_{0}=(0, 0)$ and $x_{n, m}=(\frac{1}{n}, \frac{1}{nm})$ for any
$n, m\in\mathbb{N}$.

The space $X$ is called $S_{\omega}$ if $X$ is obtained by identifying all the
limit points of $\omega$ many convergent sequences.

If $A$ is a subset of a space $X$, then $[A]^{Seq}$
denotes the {\it sequential closure} of $A$, i.e. the set of limits of convergent sequences in
$A$. Clearly, we have $A\subset [A]^{seq}$. By induction on $\alpha\in\omega_{1}+1$, we can define $[A]_{\alpha}$ as follows:
$[A]_{0}=A, [A]_{\alpha+1}=[[A]_{\alpha}]^{seq}$ and $[A]_{\alpha}=\cup\{[A]_{\beta}: \beta<\alpha \}$ for a limit order $\alpha$. One
can easily verify that $[A]_{\omega_{1}+1}=[A]_{\omega_{1}}$, and that a space $X$ is sequential iff
$\overline{A}=[A]_{\omega_{1}}$ for every $A\subset X$. For a sequential space $X$ we define $so(X)$, the {\it sequential
order} of $X$, by $so(X)=\min\{\alpha\in\omega_{1}+1: \overline{A}=[A]_{\alpha}\ \mbox{for each}\ A\subset X\}$.

\begin{definition}
A space $X$ is said to be {\it Fr$\acute{e}$chet-Urysohn} if, for
each $x\in \overline{A}\subset X$, there exists a sequence
$\{x_{n}\}$ such that $\{x_{n}\}$ converges to $x$ and $\{x_{n}:
n\in\mathbb{N}\}\subset A$. A space $X$ is said to be {\it strongly
Fr$\acute{e}$chet-Urysohn} if the following condition is satisfied

(SFU) For every $x\in X$ and each sequence $\eta =\{A_{n}: n\in
\mathbb{N}\}$ of subsets of $X$ such that $x\in\bigcap_{n\in
\mathbb{N}}\overline{A_{n}}$, there is a sequence $\zeta =\{a_{n}:
n\in \mathbb{N\}}$ in $X$ converging to $x$ and intersecting
infinitely many members of $\eta$.
\end{definition}

Obviously, a strongly Fr$\acute{e}$chet-Urysohn space is
Fr$\acute{e}$chet-Urysohn. However, the space $S_{\omega}$ is
Fr$\acute{e}$chet-Urysohn and non-strongly
Fr$\acute{e}$chet-Urysohn.

Let $X$ be a space. For $P\subset X$, the set $P$ is a {\it sequential
neighborhood} of $x$ in $X$ if every sequence converging to $x$ is
eventually in $P$.

\begin{definition}
Let $\mathscr{P}=\bigcup_{x\in X}\mathscr{P}_{x}$ be a cover of a
space $X$ such that for each $x\in X$, (a) if $U,V\in
\mathscr{P}_{x}$, then $W\subset U\cap V$ for some $W\in
\mathscr{P}_{x}$; (b) the family $\mathscr{P}_{x}$ is a network of $x$ in $X$,
i.e., $x\in\bigcap\mathscr{P}_x$, and if $x\in U$ with $U$ open in
$X$, then $P\subset U$ for some $P\in\mathscr P_x$.

The family $\mathscr{P}$ is called a {\it weak base} for $X$ \cite{A1966} if, for every $G\subset X$, the set $G$ must be open in $X$ whenever for each $x\in G$ there exists $P\in
\mathscr{P}_{x}$ such that $P\subset G$.
The space $X$ is {\it weakly first-countable} if the family $\mathscr{P}$ is a weak base for $X$ such that each $\mathscr{P}_{x}$ is countable.
\end{definition}

The following theorem for the first time
there was announced in \cite{C1987}, and the readers can see the proof in \cite{C1992, G1996, V1989}.

\begin{theorem}\cite{C1987}\label{t9}
A topological space $G$ is rectifiable if and only if there exists $e\in G$ and two
continuous maps $p: G^{2}\rightarrow G$, $q: G^{2}\rightarrow G$
such that for any $x\in G, y\in G$ the next
identities hold:
$$p(x, q(x, y))=q(x, p(x, y))=y, q(x, x)=e.$$
\end{theorem}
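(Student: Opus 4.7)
The plan is to observe that the condition $\pi_1 \circ \varphi = \pi_1$ forces the rectification to fix the first coordinate, so everything reduces to reading off the second coordinate.

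For the forward direction, suppose $G$ is rectifiable with rectification $\varphi : G\times G \to G \times G$ and distinguished point $e$. Since $\pi_1\circ \varphi = \pi_1$, the homeomorphism $\varphi$ has the form $\varphi(x,y) = (x, q(x,y))$ for a uniquely determined function $q: G^2 \to G$; continuity of $\varphi$ together with continuity of $\pi_2$ gives continuity of $q$, and $\varphi(x,x)=(x,e)$ translates directly to $q(x,x)=e$. Now $\varphi^{-1}$ is also a homeomorphism, and because $\pi_1\circ\varphi = \pi_1$ implies $\pi_1\circ\varphi^{-1}=\pi_1$, the same reasoning produces a continuous $p:G^2\to G$ with $\varphi^{-1}(x,y)=(x,p(x,y))$. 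The identities $\varphi\circ\varphi^{-1}=\mathrm{id}_{G^2}=\varphi^{-1}\circ\varphi$ unwind, on comparing second coordinates, precisely to $q(x,p(x,y))=y$ and $p(x,q(x,y))=y$.

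For the converse, given continuous maps $p,q:G^2\to G$ and a point $e\in G$ satisfying the displayed identities, I would simply define $\varphi(x,y)=(x,q(x,y))$ and $\psi(x,y)=(x,p(x,y))$. Both are continuous since their coordinate functions are, and the identities $p(x,q(x,y))=q(x,p(x,y))=y$ say exactly that $\psi\circ\varphi = \varphi\circ\psi = \mathrm{id}_{G\times G}$, so $\varphi$ is a homeomorphism with inverse $\psi$. By construction $\pi_1\circ\varphi=\pi_1$, and $q(x,x)=e$ yields $\varphi(x,x)=(x,e)$, so $\varphi$ is a rectification on $G$ with neutral element $e$.

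There is no real obstacle here: the proof is a mechanical unpacking of the definition once one notices that condition $\pi_1\circ \varphi = \pi_1$ means $\varphi$ is entirely encoded by its second coordinate map, and that the homeomorphism property is equivalent, in terms of that second coordinate, to the existence of a continuous two-sided ``inverse relative to $x$'', which is exactly the role of $p$. The only subtlety worth noting is that uniqueness of $q$ given $\varphi$ (and similarly $p$ given $\varphi^{-1}$) is needed to make the identification rigorous, but this is immediate from the definition of a function.
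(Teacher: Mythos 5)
Your proof is correct: the paper does not reproduce an argument for this theorem (it cites the proof to other sources), but immediately after the statement it notes that one may take $p=\pi_{2}\circ\varphi^{-1}$ and $q=\pi_{2}\circ\varphi$, which is exactly the identification your argument makes in both directions. The only step you state without detail, that $\pi_{1}\circ\varphi=\pi_{1}$ forces $\pi_{1}\circ\varphi^{-1}=\pi_{1}$, is indeed immediate from bijectivity of $\varphi$, so nothing is missing.
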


In fact, we can assume that $p=\pi_{2}\circ \varphi^{-1}$ and
$q=\pi_{2}\circ \varphi$ in Theorem~\ref{t9}. Fixed a point $x\in
G$, then $f_{x}, g_{x}: G\rightarrow G$ defined with $f_{x}(y)=p(x,
y)$ and $g_{x}(y)=q(x, y)$, for each $y\in G$, are homeomorphism,
respectively. We denote $f_{x}, g_{x}$ with $p(x, G), q(x, G)$,
respectively.

Let $G$ be a rectifiable space, and let $p$ be the multiplication on
$G$. Further, we sometime write $x\cdot y$ instead of $p(x, y)$ and
$A\cdot B$ instead of $p(A, B)$ for any $A, B\subset G$. Therefore,
$q(x, y)$ is an element such that $x\cdot q(x, y)=y$; since $x\cdot
e=x\cdot q(x, x)=x$ and $x\cdot q(x, e)=e$, it follows that $e$ is a right neutral
element for $G$ and $q(x, e)$ is a right inverse for $x$. Hence a
rectifiable space $G$ is a topological algebraic system with
operation $p, q$, 0-ary operation $e$ and identities as above. It is
easy to see that this algebraic system need not to satisfy the
associative law about the multiplication operation $p$. Clearly,
every topological loop is rectifiable.

All spaces are $T_1$ and regular unless stated otherwise.
The notation $\mathbb{N}$ denotes the set of all positive natural numbers. The letter $e$
denotes the neutral element of a group and the right neutral element
of a rectifiable space, respectively. Readers may refer to
\cite{A2008, E1989, Gr1984} for notations and terminology not
explicitly given here.
\bigskip

\section{$l_{2}^{\infty}$ is homeomorphic to no rectifiable space or paratopological groups}
In this section, by a modification of the proof of Theorem 1 in \cite{BT}, we show that $l_{2}^{\infty}$ is homeomorphic to no rectifiable space or paratopological group.

We call a subset $A$ of a rectifiable space (resp. paratopological group) $G$ {\it multiplicative}
if for any $a, b\in A$ we have $p(a, b)=a\cdot b\in A$ (resp. $ab\in A$).

We denote by conv($K$)=$\{(0, 0)\}\cup\{(x, y): 0<y\leq x\leq 1\}$, where
conv($K$) is the convex hull of $K$ in $\mathbb{R}^{2}$.

In this section, we may assume that $S_{\omega}=\{y_{0}\}\cup\{y_{n, m}=(n,
\frac{1}{m}): n, m\in\mathbb{N}\}$, where, for each $n\in
\mathbb{N}$, the sequence $\{y_{n, m}\}\rightarrow y_{0}$ as
$m\rightarrow\infty$. For each $k\in \mathbb{N}$, let
$V_{k}=\{y_{0}\}\cup\{y_{n, m}: n\leq k, m\in \mathbb{N}\}$. It is easy to see that $S_{\omega}$ has the direct
limit topology with respect to the sequence $V_{1}, V_{2}, \cdots.$

\begin{theorem}\label{t0}
Let $X$ be a normal $k$-space. If $X$ contains closed copies of $S_{\omega}$ and
$K$, then it is homeomorphic to no closed multiplicative subset of a
rectifiable space $G$ such that $y_{0}$ is
the right neutral element of $G$.
\end{theorem}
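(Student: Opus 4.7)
Suppose for contradiction that $X$ is homeomorphic to a closed multiplicative subset of a rectifiable space $G$ whose right neutral element is $y_0$. After identification, $S_\omega=\{y_0\}\cup\{y_{n,m}\}$ and $K=\{x_0\}\cup\{x_{n,m}\}$ both lie closed in $X\subset G$ and $y_0=e$.

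First I would exploit Theorem~\ref{t9}: for every $x\in G$ the map $p(x,\cdot)\colon G\to G$ is a homeomorphism sending $e=y_0$ to $x$. Restricting to $S_\omega$ produces, for each $x\in X$, a closed homeomorphic copy $x\cdot S_\omega$ of $S_\omega$ with hub $x$, and multiplicativity of $X$ places this copy inside $X$. Because $K\subset X$ is closed and non-locally compact at $x_0$, for each fixed $n$ the column $\{x_{n,m}\colon m\in\mathbb{N}\}$ is a closed discrete subset of $X$; it hosts the disjoint family of $S_\omega$-fans $x_{n,m}\cdot S_\omega$, while the hubs $x_{n,m}$ themselves accumulate at $x_0$ along appropriate diagonals of $K$.

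Following the blueprint of Theorem~1 of \cite{BT}, the main step is to construct a set
\[
A=\{x_{n,m}\cdot y_{\sigma(n,m),\tau(n,m)}\colon n,m\in\mathbb{N}\}\subset X
\]
for functions $\sigma,\tau\colon\mathbb{N}^{2}\to\mathbb{N}$ calibrated so that (a) $x_0\in\overline{A}$, which follows from joint continuity of $p$ once a diagonal with $x_{n,m}\to x_0$ and $y_{\sigma(n,m),\tau(n,m)}\to e$ is arranged, and (b) no sequence in $A$ converges to $x_0$, as any such sequence would, after being pulled back through left translations, be forced to contradict the $S_\omega$-topology. Property (b) says $x_0$ is a non-sequential limit of $A$, so $A\cup\{x_0\}$ fails to be first countable at $x_0$ and, in fact, contains a closed copy of Arens' space $S_2$ near $x_0$. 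A further normality argument separates this $S_2$-copy cleanly from $S_\omega$ inside $X$ and exploits the $k$-space hypothesis to deliver the required contradiction.

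The main obstacle is (b): arranging $\sigma$ and $\tau$ so that no diagonal subsequence of $A$ converges to $x_0$. In Banakh's original argument \cite{BT} for topological groups, both left and right translations are homeomorphisms, so a hypothetical convergent diagonal in $A$ can be transported to $S_\omega$ via right translation and shown to violate the $S_\omega$ fan topology. In a rectifiable space only the left translations $p(x,\cdot)$ are homeomorphisms, so the transport must be rerouted through the partial inverse $q(x,\cdot)$ from Theorem~\ref{t9}, making essential use of the identities $p(x,q(x,y))=q(x,p(x,y))=y$ and $q(x,x)=e$. This substitution is precisely the ``modification'' alluded to at the beginning of the section.
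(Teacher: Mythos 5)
Your opening moves match the paper's blueprint (translate the fan by points of $K$ via $p$, calibrate the second indices, use joint continuity of $p$ at $(x_0,y_0)$ to see that every neighborhood of $x_0$ meets the calibrated set, and replace the missing group inverse by $q$ through the identities of Theorem~\ref{t9}), but the decisive part of the argument is missing, and the sketch you offer in its place would not close the proof. The paper does not argue via ``$x_0\in\overline{A}$ yet no sequence in $A$ converges to $x_0$'': in a space that is only a $k$-space, (a)$+$(b) is not a contradiction, and your proposed repair --- that $A\cup\{x_0\}$ ``contains a closed copy of $S_2$'' which a ``further normality argument separates from $S_\omega$'' --- is a placeholder rather than an argument ($A$ has exactly one point per index $(n,m)$, so it contains no convergent sequences toward intermediate hubs, and it is not explained what separating an $S_2$-copy from $S_\omega$ would give you). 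What the paper actually proves is the stronger statement that the calibrated set $Z=\{p(x_{n,m},y_{n,k(n,m)}):n,m\in\mathbb{N}\}$ is \emph{closed} in $X$; then $X\setminus Z$ is a neighborhood of $x_0=p(x_0,y_0)$, and joint continuity of $p$ (your step (a)) immediately yields the contradiction.

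Moreover, the two hypotheses are used in specific places your plan does not identify. Normality is used once, at the start, to extend $g^{-1}\colon g(K)\to K\subset\mathrm{conv}(K)$ to a continuous map $h\colon X\to\mathrm{conv}(K)$, and the indices $k(n,m)$ are chosen so that $h(p(x_{n,m},y_{n,k(n,m)}))$ lands in pairwise disjoint windows $W_{n,m}$ around $x_{n,m}$. This $h$ is exactly what controls the first coordinates of points of $Z$: your own justification of (b) (pull a convergent sequence back through $q$ and contradict the fan topology) already presupposes that the first coordinates $x_{n_i,m_i}$ of the hypothetical convergent subsequence tend to $x_0$, which is not automatic --- a sequence $p(x_{n_i,m_i},y_{\cdot})$ could a priori converge to $x_0$ with the $x_{n_i,m_i}$ staying on finitely many columns --- and ruling this out is precisely what the $h$-calibration is for. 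The $k$-space hypothesis is used to prove that $Z$ is closed: for a compact $F\subset X$, the image $h(F)$ meets only a controlled family of windows $W_{n,m}$, the closed embedding $(\pi_1,f)\colon K\times S_\omega\to K\times X$ pulls $F$ back to a compact subset of $K\times S_\omega$, and since $S_\omega$ is the direct limit of the subfans $V_n$ that compact set uses only finitely many spokes, so $F\cap Z$ is finite, hence closed in $F$. Without these two devices (the extension $h$ with the windows $W_{n,m}$, and the closedness-of-$Z$ argument via the $k$-space property and the closed embedding), your plan has no mechanism for turning (a) and (b) into a contradiction, so as written there is a genuine gap.
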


\begin{proof}
Suppose not, let $X$ be a closed multiplicative subset of a
rectifiable space $G$. Now, we define a map $f: K\times
 S_{\omega}\rightarrow X$ with $f(x, y)=p(x, y)$ for each $(x, y)\in
K\times S_{\omega}$. Then the following (1) and
(2) hold:

(1) the map $(\pi_{1}, f): K\times S_{\omega}\rightarrow K\times X$
is a closed embedding, where $(\pi_{1}, f)(x, y)=(x, f(x, y))$ for
each $(x, y)\in K\times S_{\omega}$;

(2) the map $g: K\rightarrow X$ defined by $g(x)=p(x, y_{0})=x$ (that is, $g$ is the
identity map), for each $x\in K$, is a closed embedding.

Indeed, the statement (2) is obvious. Moreover, it is easy to see that the map $(\pi_{1}, f): K\times S_{\omega}\rightarrow K\times X$
is injective continuous. We only show that the map $(\pi_{1}, f)$ is relatively open. For each open subset $U\times V$ of $K\times S_{\omega}$, we have $(\pi_{1}, f)(U\times V)=\cup\{\{x\}\times (x\cdot V): x\in U\}$, where $U$ and $V$ are open in $K$ and $S_{\omega}$ respectively. Since $V$ is open in $S_{\omega}$, there exists an open subset $W$ of $X$ such that $W\cap S_{\omega}=V$. Therefore, we have $$(\pi_{1}, f)(U\times V)=\cup\{\{x\}\times p(x, V): x\in U\}=(U\times (U\cdot W))\cap (\pi_{1}, f)(K\times S_{\omega}).$$ Since $p(U, W)=\cup\{p(x, W): x\in U\}$ is open in $X$, the set $U\times p(U, W)$ is open in $K\times X.$

By the normality of $X$, let $h: X\rightarrow\mbox{conv}(K)$ be a
continuous extension of the map $g^{-1}: g(K)\rightarrow K$.

For each $n, m\in \mathbb{N}$, let $\delta_{n,
m}=\frac{1}{2nm(m+1)}$, and put
$$W_{n, m}=\mbox{conv}(K)\cap ((\frac{1}{n}-\delta_{n,
m}, \frac{1}{n}+\delta_{n, m})\times (\frac{1}{nm}-\delta_{n, m},
\frac{1}{nm}+\delta_{n, m})).$$ Obviously, the collection $\{W_{n, m}: n,
m\in\mathbb{N}\}$ consists of pairwise disjoint neighborhoods
of the points $x_{n, m}$ in conv($K$). Since $y_{n, m}\rightarrow
y_{0}$ as $m\rightarrow\infty$ and $h\circ f(x_{n, m}, y_{0})=h(p(x_{n, m}, y_{0}))=x_{n,
m}=(\frac{1}{n}, \frac{1}{nm})$, for any $n, m\in \mathbb{N}$, there
exists a $k(n, m)\in \mathbb{N}$ such that $h\circ f(x_{n, m},
y_{n, k(n, m)})\in W_{n, m}$. Without loss of generality, we may assume
that $k(n, m+1)>k(n, m)$ for any $n, m\in \mathbb{N}$. Put
$$Z=\{p(x_{n, m}, y_{n, k(n, m)}): n, m\in \mathbb{N}\}.$$ For each $n, m\in \mathbb{N}$, it
follows from $h\circ f(x_{0}, y_{0})\not\in W_{n, m}$ that $f(x_{0},
y_{0})\not\in Z$.

Claim: $Z$ is closed in $X$.

Since $X$ is a $k$-space, it suffices to prove that for each compact
subset $F$ of $X$ the intersection $F\cap Z$ is closed in $F$. Let
$$F_{1}=\{x_{0}\}\cup\{x_{n, m}:
h(F)\cap W_{n, m}\neq\emptyset , n, m\in \mathbb{N}\}\ \mbox{and}\
F_{2}=\pi_{2}((\pi_{1}, f)^{-1}(F_{1}\times F)).$$ Since
$h(F)\subset \mbox{conv}(K)$ is compact, the set $F_{1}$ is compact. It
follows from (1) that $$(\pi_{1}, f)^{-1}(F\times F_{1})\subset
K\times S_{\omega}$$ is compact, and hence $F_{2}$ is also compact.
Because $S_{\omega}=\underset{\longrightarrow}{\lim}V_{n}$, there exists an
$n_{0}\in \mathbb{N}$ such that $F_{2}\subset V_{n_{0}}$. Since
$F\cap Z\subset p(F_{1}, F_{2})$, we have $F\cap Z\subset \{p(x_{n, m}, y_{n, k(n,
m)}): n\leq n_{0}, y_{n, m}\in F_{1}\}$. By the
compactness of $F_{1}$, it is easy to see that $\{p(x_{n,
m}, y_{n, k(n, m)}): n\leq n_{0}, x_{n, m}\in F_{1}\}$ is finite. Therefore, the set $F\cap
Z$ is closed in $F$.

Since $p(x_{0}, y_{0})=x_{0}\not\in Z$ and $p$ is continuous, it follows
from the Claim that there exist open neighborhoods $V(x_{0})\subset K$ and $U(y_{0})\subset
S_{\omega}$ of $x_{0}$ and $y_{0}$ respectively such that
$p(V(x_{0}), U(y_{0}))\cap Z=\emptyset$. For every $m\in
\mathbb{N}$, we can fix an $n\in \mathbb{N}$ such that $x_{n, m}\in
V(x_{0})$. Since $\{y_{n ,m}\}_{m=1}^{\infty}$ converges to $y_{0}$
and $\{k(n, m)\}_{m=1}^{\infty}$ is increasing, there is an $m\in
\mathbb{N}$ such that $y_{n, k(n, m)}\in U(y_{0}).$ Then $p(x_{n, m}, y_{n,
k(n, m)})\in p(V(x_{0}), U(y_{0}))\cap Z$, which is a
contradiction.
\end{proof}

\begin{theorem}\label{t2}
Let $X$ be a normal $k$-space. If $X$  contains closed copies of $S_{\omega}$ and
$K$, then it is homeomorphic to no closed multiplicative subset of a
paratopological group.
\end{theorem}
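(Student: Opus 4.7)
The plan is to parallel the proof of Theorem~\ref{t0}, with the group multiplication $\cdot$ replacing the rectification $p$ and the role of the identity $p(x, y_0) = x$ taken over by the right translation $R_{y_0}$, which is a homeomorphism of $G$. Suppose for contradiction that $X$ is a closed multiplicative subset of a paratopological group $G$, and let $y_0 \in X$ be the limit point of the fixed closed copy of $S_\omega \subset X$. Define $f : K \times S_\omega \to X$ by $f(x, y) = x \cdot y$ and $g : K \to X$ by $g(x) = x \cdot y_0$. The key preliminary observation, bypassing the absence of continuous inversion in $G$, is that for every fixed $a \in G$ the left translation $L_a$ and the right translation $R_a$ are homeomorphisms of $G$: each is continuous by joint continuity of multiplication, and its two-sided inverse $L_{a^{-1}}$, respectively $R_{a^{-1}}$, is continuous for the same reason. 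In particular $g = R_{y_0}|_K$ is a closed embedding of $K$ into $X$.

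Using the normality of $X$, I would extend $g^{-1} \colon g(K) \to K \subset \mathrm{conv}(K)$ to a continuous map $h : X \to \mathrm{conv}(K)$ via Tietze's theorem, and construct the disjoint neighborhoods $W_{n,m}$ of $x_{n,m}$ exactly as in Theorem~\ref{t0}. Since $h \circ f(x_{n,m}, y_0) = h(g(x_{n,m})) = x_{n,m} \in W_{n,m}$ and $y_{n, l} \to y_0$ as $l \to \infty$, I would choose $k(n, m)$ strictly increasing in $m$ with $h \circ f(x_{n, m}, y_{n, k(n, m)}) \in W_{n,m}$, and set
$$Z = \{x_{n, m} \cdot y_{n, k(n, m)} : n, m \in \mathbb{N}\}.$$
Because $h$ sends $f(x_0, y_0) = g(x_0)$ to $x_0 \notin \bigcup_{n, m} W_{n, m}$, while sending every element of $Z$ into some $W_{n, m}$, we have $f(x_0, y_0) \notin Z$.

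To show $Z$ is closed in the $k$-space $X$, it suffices to verify that $F \cap Z$ is finite for every compact $F \subset X$. With $F_1 = \{x_0\} \cup \{x_{n, m} : h(F) \cap W_{n, m} \neq \emptyset\}$ finite as in Theorem~\ref{t0}, the key computation is
$$(\pi_1, f)^{-1}(F_1 \times F) = \bigcup_{x \in F_1} \{x\} \times (L_{x^{-1}}(F) \cap S_\omega),$$
a finite union of compact sets, because each $L_{x^{-1}}(F)$ is compact (continuous image of the compact $F$) and $S_\omega$ is closed in $X$. Hence $F_2 = \pi_2((\pi_1, f)^{-1}(F_1 \times F))$ is a compact subset of $S_\omega$ and lies in some $V_{n_0}$, and the same counting as in Theorem~\ref{t0} shows that $F \cap Z$ is finite. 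With $Z$ closed, joint continuity of multiplication provides open neighborhoods $V(x_0), U(y_0)$ with $V(x_0) \cdot U(y_0) \cap Z = \emptyset$; choosing $n$ large enough that $x_{n, m} \in V(x_0)$ for all $m$ and then $m$ with $y_{n, k(n, m)} \in U(y_0)$ (possible since $\{y_{n, k(n, m)}\}_m$ is a subsequence of $\{y_{n, l}\}_l \to y_0$) produces an element of $V(x_0) \cdot U(y_0) \cap Z$, the desired contradiction.

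The main obstacle is the lack of continuous inversion in $G$, which prevents one from realizing $(\pi_1, f)$ as a closed embedding via a global homeomorphism of $K \times G$, as would be natural in the topological-group or rectifiable setting. The remedy is to bypass the closed-embedding claim altogether and replace it by the direct compactness computation of $(\pi_1, f)^{-1}(F_1 \times F)$ above; this uses only that each individual translation $L_{x^{-1}}$ is a homeomorphism of $G$, a property that is available in every paratopological group without any hypothesis on the continuity of inversion.
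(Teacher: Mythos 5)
Your construction of $g$, $h$, the sets $W_{n,m}$, $Z$, and the final contradiction all parallel the paper correctly, and you are right to flag that the paper's claim that $(\pi_1,f)$ is a closed embedding is not obvious in a paratopological group. But your substitute for that claim contains a genuine gap: you assert that $F_1=\{x_0\}\cup\{x_{n,m}: h(F)\cap W_{n,m}\neq\emptyset\}$ is ``finite as in Theorem~\ref{t0}''. In Theorem~\ref{t0} the set $F_1$ is only shown to be \emph{compact}, and in general it is not finite: compactness of $h(F)\subset \mathrm{conv}(K)$ forces, for each fixed $n$, only finitely many $m$ with $h(F)\cap W_{n,m}\neq\emptyset$ (since the points of $W_{n,m}$ accumulate at $(1/n,0)\notin \mathrm{conv}(K)$), but $h(F)$ may meet $W_{n,1}$ for every $n$ (e.g.\ if $h(F)\supset\{x_0\}\cup\{x_{n,1}: n\in\mathbb{N}\}$), so $F_1$ can be an infinite compact set clustering at $x_0$. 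Consequently your identity $(\pi_1,f)^{-1}(F_1\times F)=\bigcup_{x\in F_1}\{x\}\times(L_{x^{-1}}(F)\cap S_\omega)$ exhibits a possibly \emph{infinite} union of compact sets, and neither its compactness nor the compactness of $F_2=\pi_2((\pi_1,f)^{-1}(F_1\times F))$ follows; but that compactness is exactly what you need to find $n_0$ with $F_2\subset V_{n_0}$ and conclude that $F\cap Z$ is finite. Without it, the closedness of $Z$ is unproved and the whole argument stalls.

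Moreover, this is precisely where the paratopological difficulty you identified actually lives, so your remedy relocates the obstacle rather than removing it. To show that $F_2=S_\omega\cap\bigcup_{x\in F_1}x^{-1}F$ meets only finitely many spines of $S_\omega$ one needs uniform control over the whole family of translations $\{L_{x^{-1}}: x\in F_1\}$ with $F_1$ infinite compact, i.e.\ essentially the joint continuity of $(x,z)\mapsto x^{-1}z$ on $F_1\times F$; this is available for topological groups, and for rectifiable spaces via the global homeomorphism $\varphi^{-1}(x,y)=(x,p(x,y))$ (which is how claim (1) of Theorem~\ref{t0} is really justified), but it is exactly what fails when inversion is discontinuous. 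Separate continuity of each individual $L_{x^{-1}}$ gives nothing here: if $x_j\in F_1$ clusters at $x$ and $x_jy_j\in F$, one cannot pass to a cluster point of $y_j=x_j^{-1}(x_jy_j)$. The paper's own proof papers over this by declaring (1) ``obvious'' and reusing the Theorem~\ref{t0} construction; your write-up correctly diagnoses that weakness but does not repair it — to complete the proof you would have to establish compactness of $(\pi_1,f)^{-1}(F_1\times F)$ (or some other bound forcing $F_2\subset V_{n_0}$) by an argument using only the paratopological structure, and that step is missing.
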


\begin{proof}
Suppose not, let $X$ be a closed multiplicative subset of a
rectifiable space $G$. Now, we define a map $f: K\times
 S_{\omega}\rightarrow X$ with $f(x, y)=xy$ for each $(x, y)\in
K\times S_{\omega}$. Obviously, we can obtain the following (1) and
(2):

(1) the map $(\pi_{1}, f): K\times S_{\omega}\rightarrow K\times X$
is a closed embedding, where $(\pi_{1}, f)(x, y)=(x, xy)$ for
each $(x, y)\in K\times S_{\omega}$;

(2) the map $g: K\rightarrow X$ defined by $g(x)=xy_{0}$, for each $x\in K$, is a closed embedding.

By the normality of $X$, let $h: X\rightarrow\mbox{conv}(K)$ be a
continuous extension of the map $g^{-1}: g(K)\rightarrow K$.

By the proof of Theorem~\ref{t0}, we can define the neighborhoods $W_{n, m}$ of the points $x_{n, m}$ in conv($K$) and the closed set $Z$ with
$x_{0}y_{0}\not\in Z$.

Since $f(x_{0},
y_{0})=x_{0}y_{0}\not\in Z$, $G$ is joint continuous and $Z$ is closed, it follows
that there exist open neighborhoods $V(x_{0})\subset K$ and $U(y_{0})\subset
S_{\omega}$ of $x_{0}$ and $y_{0}$ respectively such that
$(V(x_{0})\times U(y_{0}))\cap Z=\emptyset$. For every $m\in
\mathbb{N}$, we can fix an $n\in \mathbb{N}$ such that $x_{n, m}\in
V(x_{0})$. Since $\{y_{n ,m}\}_{m=1}^{\infty}$ converges to $y_{0}$
and $\{k(n, m)\}_{m=1}^{\infty}$ is increasing, there is an $m\in
\mathbb{N}$ such that $y_{n, k(n, m)}\in U(y_{0}).$ Then $x_{n, m}y_{n,
k(n, m)}\in (V(x_{0})\times U(y_{0}))\cap Z$, which is a
contradiction.
\end{proof}

It is well known that a space $X$ contains a closed copy of $S_{\omega}$, provided $X$ can be written as a direct
limit of a sequence
$$X_{1}\subset X_{2}\subset\cdots,$$where each $X_{n}$ is a closed metrizable subset of $X$, nowhere dense in $X_{n+1}$. In
particular, the space $l_{2}^{\infty}$ contains a topological closed copy of $S_{\omega}$. Moreover, the space $l_{2}^{\infty}$ is a normal $k$-space and contains a topological closed copy of $K$.
Therefore, by the topological homogeneity of $l_{2}^{\infty}$ and Theorems~\ref{t0} and~\ref{t2}, we have the following theorem.

\begin{theorem}
$l_{2}^{\infty}$ is homeomorphic to no rectifiable space or a paratopological group.
\end{theorem}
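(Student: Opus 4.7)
My plan is to reduce each case to Theorems~\ref{t0} and~\ref{t2} by exploiting the topological homogeneity of $l_{2}^{\infty}$. The three topological ingredients, already recorded in the paragraph preceding the theorem, are that $l_{2}^{\infty}$ is a normal $k$-space and contains topological closed copies of both $S_{\omega}$ and $K$; I will cite these directly rather than reprove them.

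For the paratopological group case, suppose for contradiction that $l_{2}^{\infty}$ is homeomorphic to a paratopological group $G$, and identify them via the homeomorphism. Then $G$ itself is trivially a closed multiplicative subset of the paratopological group $G$, and by the ingredients above it is a normal $k$-space containing closed copies of $S_{\omega}$ and $K$. Theorem~\ref{t2} applied with $X=G$ yields the desired contradiction. No use of homogeneity is needed in this case, because the statement of Theorem~\ref{t2} imposes no relation between the limit point of the $S_{\omega}$-copy and any distinguished element of the group.

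For the rectifiable case, suppose $l_{2}^{\infty}$ is homeomorphic to a rectifiable space $G$; via the homeomorphism, transport the rectifiable structure back so that $l_{2}^{\infty}$ itself becomes a rectifiable space with some right neutral element $e$. Fix a closed copy $S\subset l_{2}^{\infty}$ of $S_{\omega}$ with limit point $p$. By the topological homogeneity of $l_{2}^{\infty}$, choose a self-homeomorphism $\sigma$ of $l_{2}^{\infty}$ with $\sigma(p)=e$. Then $\sigma(S)$ is again a closed copy of $S_{\omega}$ in $l_{2}^{\infty}$ whose limit point is now exactly $e$, while the image under $\sigma$ of any fixed closed copy of $K$ is still a closed copy of $K$. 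Hence $l_{2}^{\infty}$, viewed as a closed multiplicative subset of the rectifiable space $l_{2}^{\infty}$ itself, satisfies the hypotheses of Theorem~\ref{t0} with $y_{0}=e$, which delivers the contradiction.

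The only step that requires any actual work is the alignment $y_{0}=e$ in the rectifiable case, and this is precisely where the topological homogeneity of $l_{2}^{\infty}$ enters; once this alignment is set up, Theorem~\ref{t0} applies verbatim. Everything else is bookkeeping: both reductions use the fact that an abstract rectifiable space (resp.\ paratopological group) $G$ is, after identification with $l_{2}^{\infty}$, always a closed multiplicative subset of itself, so one never needs to manufacture a nontrivial candidate for the multiplicative subset $X$ appearing in the two earlier theorems.
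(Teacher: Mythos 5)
Your proposal is correct and follows essentially the same route as the paper: the theorem is deduced from Theorems~\ref{t0} and~\ref{t2} together with the facts that $l_{2}^{\infty}$ is a normal $k$-space containing closed copies of $S_{\omega}$ and $K$, with the topological homogeneity of $l_{2}^{\infty}$ invoked exactly to move the limit point of the $S_{\omega}$-copy onto the right neutral element $e$ in the rectifiable case. Your explicit remark that homogeneity is not needed in the paratopological group case, since Theorem~\ref{t2} imposes no condition tying $y_{0}$ to a distinguished element, is a correct clarification of a step the paper leaves implicit.
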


\begin{corollary}
$l_{2}^{\infty}$ is homeomorphic to no topological loop.
\end{corollary}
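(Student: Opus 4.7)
The plan is to reduce this corollary directly to the theorem just proved, rather than repeat any of the machinery involving the spaces $K$ and $S_{\omega}$. The key observation, which is already flagged in the preliminaries of this paper (``Clearly, every topological loop is rectifiable''), is that the class of topological loops sits inside the class of rectifiable spaces: given a topological loop $L$ with two-sided identity $e$ and continuous left division, the map $\varphi\colon L\times L\to L\times L$ defined by $\varphi(x,y)=(x,x\backslash y)$ (where $x\backslash y$ is the unique solution of $x\cdot z=y$) is a homeomorphism satisfying $\pi_{1}\circ\varphi=\pi_{1}$ and $\varphi(x,x)=(x,e)$, so it is a rectification on $L$.

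Given this, the proof of the corollary proceeds by contradiction in a single step. Suppose $l_{2}^{\infty}$ were homeomorphic to some topological loop $L$. Then, via the rectification $\varphi$ above, $L$ is a rectifiable space, and consequently $l_{2}^{\infty}$ would be homeomorphic to a rectifiable space. This contradicts the preceding theorem, which asserts exactly that $l_{2}^{\infty}$ is homeomorphic to no rectifiable space.

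There is no genuine obstacle here; the only point worth making explicit is that the rectification structure on a topological loop is canonical and uses only the continuity of the division operation built into the definition of ``topological loop'', so no additional hypothesis on $L$ is needed. Since the heavy lifting (the argument involving closed copies of $K$ and $S_{\omega}$, the auxiliary set $Z$, and the continuous extension $h\colon X\to\mathrm{conv}(K)$) was already carried out in Theorem~\ref{t0} and in the statement that $l_{2}^{\infty}$ is homeomorphic to no rectifiable space, the corollary is essentially a one-line consequence of the inclusion of classes topological loops $\subset$ rectifiable spaces.
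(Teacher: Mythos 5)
Your proposal is correct and matches the paper's intent exactly: the corollary is an immediate consequence of the theorem that $l_{2}^{\infty}$ is homeomorphic to no rectifiable space, together with the observation (already noted in the preliminaries) that every topological loop is rectifiable via $\varphi(x,y)=(x,x\backslash y)$. Nothing more is needed.
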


\begin{corollary}\cite{BT}
$l_{2}^{\infty}$ is homeomorphic to no topological group.
\end{corollary}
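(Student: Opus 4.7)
The plan is to apply Theorems~\ref{t0} and~\ref{t2} with $X := G$ itself, once homogeneity has been used to reposition a copy of $S_\omega$ so that its distinguished limit point coincides with the (right) neutral element.

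Suppose, for contradiction, that $l_2^\infty$ is homeomorphic to a rectifiable space (respectively, a paratopological group) $G$. The conditions of being a normal $k$-space and of containing closed copies of $S_\omega$ and $K$ are topological invariants, and as noted just before the statement, $l_2^\infty$ satisfies all of them. Hence so does $G$. Fix a closed copy of $S_\omega$ inside $G$ and let $y_0 \in G$ denote the image of its distinguished limit point. Trivially, $G$ is a closed multiplicative subset of itself, so all the structural hypotheses of Theorem~\ref{t0} (resp.~\ref{t2}) are in place except possibly the one saying that $y_0 = e$.

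To arrange this last requirement, invoke homogeneity: the introduction records that both rectifiable spaces and paratopological groups are homogeneous, so there is a self-homeomorphism $\phi\colon G \to G$ with $\phi(y_0) = e$. Then $\phi(S_\omega)$ is again a closed copy of $S_\omega$ in $G$ whose distinguished limit point is the neutral element $e$, and a closed copy of $K$ still sits inside $G$ (either the original one or its image under $\phi$). Now apply Theorem~\ref{t0} (resp.~\ref{t2}) with $X = G$: the conclusion is that $G$ is homeomorphic to no closed multiplicative subset of any rectifiable space (resp.\ paratopological group). Yet $G$ is itself such a space and is a closed multiplicative subset of itself, a contradiction.

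There is essentially no serious obstacle once Theorems~\ref{t0} and~\ref{t2} are in hand; the only conceptual point worth flagging is the use of homogeneity to match the distinguished basepoint $y_0$ of the embedded $S_\omega$ with the right neutral element $e$ of $G$, without which Theorem~\ref{t0} does not apply. The stated corollaries on topological loops and topological groups are then immediate, since every topological loop is rectifiable and every topological group is both rectifiable and paratopological.
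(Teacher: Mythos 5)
Your argument is correct and follows the paper's own route: the paper likewise deduces the corollary from Theorems~\ref{t0} and~\ref{t2} applied to $X=G$ (a closed multiplicative subset of itself), using the facts recorded before the theorem that $l_2^\infty$ is a normal $k$-space containing closed copies of $S_\omega$ and $K$, together with homogeneity to place the limit point of the embedded $S_\omega$ at the neutral element, and then notes that a topological group is rectifiable (and paratopological). Your only cosmetic deviation is invoking the homogeneity of rectifiable spaces/paratopological groups rather than the topological homogeneity of $l_2^\infty$, which is the same repositioning step.
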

\bigskip

\section{locally compact rectifiable spaces}

In \cite{A2009}, A.V. Arhangel'ski\v\i \ and M.M. Choban posed the following question:

\begin{question}\cite[Problem 5.10]{A2009}\label{q3}
Is every rectifiable $p$-space with a countable Souslin number
Lindel$\ddot{\mbox{o}}$f? What if we assume the space to be
separable? Separable and locally compact?
\end{question}

Now, we give an affirmative answer for Questions~\ref{q3} of the case of separable and locally compact rectifiable spaces.

\begin{lemma}\label{l15}
Let $G$ be a rectifiable space. If $Y$ is a dense subset of $G$ and $U$ is an open neighborhood of the right neutral element $e$ of $G$, then $G=Y\cdot U$.
\end{lemma}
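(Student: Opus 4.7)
The plan is to mimic the standard argument for topological groups, using the rectifiable-space identity $p(y, q(y, x)) = y \cdot q(y, x) = x$ from Theorem~\ref{t9} in place of the group identity $y \cdot (y^{-1} x) = x$.

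Fix an arbitrary $x \in G$. My goal is to produce $y \in Y$ and $u \in U$ with $p(y, u) = x$. Define the continuous map $f \colon G \to G$ by $f(z) = q(z, x)$; continuity follows because $q \colon G^{2} \to G$ is continuous and $f$ is its composition with the continuous map $z \mapsto (z, x)$. Then
\[
f(x) = q(x, x) = e \in U,
\]
so $f^{-1}(U)$ is an open neighborhood of $x$ in $G$.

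Because $Y$ is dense in $G$, we may choose $y \in Y \cap f^{-1}(U)$. Setting $u := q(y, x) \in U$, the defining identity of Theorem~\ref{t9} gives
\[
p(y, u) = p(y, q(y, x)) = x,
\]
that is, $x = y \cdot u \in Y \cdot U$. Since $x$ was arbitrary, $G \subseteq Y \cdot U$, and the reverse inclusion is automatic, so $G = Y \cdot U$.

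There is no real obstacle here; the only point worth emphasizing is bookkeeping of which of the two operations $p, q$ one uses at each step. The argument hinges on $q(x,x) = e$ (so that $x$ itself lies in the preimage $f^{-1}(U)$, making density of $Y$ applicable) together with the identity $p(y, q(y, x)) = x$ (so that a $y \in Y$ close enough to $x$ in the sense that $q(y,x) \in U$ actually yields the desired factorization). No assumption on $U$ beyond being an open neighborhood of $e$ is used, and no separation axiom beyond what is already assumed for $G$.
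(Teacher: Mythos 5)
Your proof is correct and follows essentially the same route as the paper: both arguments rest on the continuity of $q$ in its first variable together with the identities $q(x,x)=e$ and $p(y,q(y,x))=x$, the only cosmetic difference being that the paper writes the relevant neighborhood of the fixed point as $z\cdot V$ while you use the preimage $f^{-1}(U)$ directly. No gaps.
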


\begin{proof}
Fix an arbitrary $z\in G$. Since $q(z, z)=e\in U$, there exists an open neighborhood $V$ of $e$ such that $q(z\cdot V, z)\subset U$. Put $W=z\cdot V$. Then $W$ is an open neighborhood of $z$ in $G$. Since $Y$ is a dense subset of $G$, we have $W\cap Y\neq\emptyset$. Take a point $y\in W\cap Y=z\cdot V\cap Y$. Then  $y=z\cdot v$ for some $v\in V$. $$z=p(z\cdot v, q(z\cdot v, z))=p(y, q(z\cdot v, z))\in p(y, q(z\cdot V, z))\subset p(y, U)=y\cdot U\subset Y\cdot U.$$

By the choice of arbitrary point of $z$, we have $G=Y\cdot U$.
\end{proof}

It follows from Lemma~\ref{l15}, we have the following results, which give an answer for Question~\ref{q3}.

\begin{theorem}
If $G$ is a locally $\sigma$-compact\footnote{A space $X$ is {\it locally $\sigma$-compact} if, for each point $x$ of $X$, there exists an open neighborhood $U_{x}$ of $x$ such that $U_{x}$ can be cover by a countably many compact subsets of $X$.} rectifiable space with
the Souslin property, then $G$ is $\sigma$-compact.

\end{theorem}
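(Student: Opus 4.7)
The plan is to manufacture a dense $\sigma$-compact subset $Y$ of $G$ and then feed it together with a suitable open neighborhood $U$ of $e$ into Lemma~\ref{l15}; this will give $G = Y\cdot U = p(Y\times U) \subset p(Y\times \overline{U})$, which is a continuous image of a $\sigma$-compact product, hence $\sigma$-compact.

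The first step promotes the footnoted form of ``locally $\sigma$-compact'' into a local property with good closure behavior. The hypothesis supplies an open $U_0 \ni e$ with $U_0 \subset \bigcup_n K_n$, each $K_n$ compact. By regularity I choose an open $U \ni e$ with $\overline{U} \subset U_0$; then $\overline{U}=\bigcup_n (\overline{U}\cap K_n)$ writes $\overline{U}$ as a countable union of sets closed in the compacta $K_n$, so $\overline{U}$ is $\sigma$-compact. Because the left translations $L_z(y)=p(z,y)$ are homeomorphisms with continuous inverses $q(z,\cdot)$, the same conclusion transfers to every point of $G$, and a second application of regularity shows that $G$ has a base $\mathcal{B}$ of open sets with $\sigma$-compact closure.

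Next I run the familiar Souslin/Zorn routine. Zorn's lemma produces a maximal pairwise disjoint subfamily $\mathcal{F} \subset \mathcal{B}$; by the Souslin property $\mathcal{F}=\{V_n : n\in\mathbb{N}\}$ is countable. Maximality combined with $\mathcal{B}$ being a base forces $\bigcup_n V_n$ to be dense in $G$, since any nonempty open set contains a member of $\mathcal{B}$ which would otherwise enlarge $\mathcal{F}$. Set $Y := \bigcup_n \overline{V_n}$; it is $\sigma$-compact as a countable union of $\sigma$-compact sets, and dense since it contains the dense set $\bigcup_n V_n$. Lemma~\ref{l15} then gives $G = Y\cdot U \subset p(Y\times \overline{U})$, the continuous image of a $\sigma$-compact product, and we conclude that $G$ is $\sigma$-compact.

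The only mildly delicate point I expect is the promotion step---verifying that the footnoted definition upgrades, via regularity, to ``$\overline{U}$ is $\sigma$-compact'' so that $Y$ can be assembled from closures of basic open sets. Beyond that, the argument is exactly the standard ``maximal Souslin family plus dense subset'' proof used for locally compact topological groups, with Lemma~\ref{l15} playing the role that the group identity $G = YVV^{-1}$ plays in the classical case.
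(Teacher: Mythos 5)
Your proof is correct and follows essentially the same route as the paper: a maximal disjoint family of ``small'' open sets via Zorn's lemma, countability from the Souslin property, density of its union from maximality, and then Lemma~\ref{l15} with a $\sigma$-compact neighborhood of $e$ to write $G$ as a countable union of continuous images of products of compacta. The only (harmless) cosmetic difference is that you upgrade the local hypothesis via regularity to closures that are genuinely $\sigma$-compact, whereas the paper works directly with the countable compact covers.
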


\begin{proof}
Let $G$ be a locally $\sigma$-compact rectifiable space with
the Souslin property. For each $\alpha\in \Gamma$, let $\mathcal{A}_\alpha$ be the family consisting of disjoint open subsets of $G$ such that each element of $\mathcal{A}_\alpha$ is covered by countably many compact subsets (since $G$ is locally $\sigma$-compact).  $\{\mathcal{A}_\alpha, \alpha\in \Gamma\}$ is a set with partial order by inclusion.
It is easy to see that every chain of $\{\mathcal{A}_\alpha, \alpha\in \Gamma\}$ has an upper bound, by Zorn's Lemma, there is a maximal element $\mathcal{A} \in \{\mathcal{A}_\alpha, \alpha\in \Gamma\}$. Since $G$ has Souslin property, we have $|\mathcal{A}|\leq \omega$, and hence we write $\mathcal{A}=\{A_i\}$, $A_i\subset \cup\{K_{i, j}\}$, where each $K_{i, j}$ is a compact subset of $G$. By maximality of $\mathcal{A}$, $\cup \{K_{i, j}\}$ is a dense subset of $G$. Let $U$ be an open neighborhood of $e$, which is covered by countably many compact subsets $\{H_l\}$. By Lemma ~\ref{l15}, $G=(\cup \{K_{i, j}\})\cdot (\cup\{H_l\})=\cup (K_{i, j}\cdot H_l)$, each $K_{i, j}\cdot H_l$ is compact, hence $G$ is $\sigma$-compact.
\end{proof}

\begin{corollary}
If $G$ is a locally compact and separable rectifiable space, then $G$ is $\sigma$-compact (and, hence, Lindel\"{o}f).
\end{corollary}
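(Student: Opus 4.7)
The plan is to deduce this corollary directly from the preceding theorem by verifying that the two hypotheses ``locally compact'' and ``separable'' respectively imply the stronger-looking hypotheses ``locally $\sigma$-compact'' and ``Souslin property,'' and then to observe that $\sigma$-compactness yields Lindel\"ofness in general.

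First, I would note that local compactness trivially implies local $\sigma$-compactness: if $x \in G$ admits an open neighborhood $U_x$ with $\overline{U_x}$ compact, then $U_x$ is covered by the single compact set $\overline{U_x}$, which is a degenerate countable cover by compact sets. Second, I would check that separability implies the Souslin property (countable cellularity). Fix a countable dense set $D \subset G$. Given any family $\mathcal{U}$ of pairwise disjoint nonempty open subsets of $G$, each $U \in \mathcal{U}$ meets $D$ by density, and since the members of $\mathcal{U}$ are pairwise disjoint, the assignment $U \mapsto U \cap D$ is a set-valued injection from $\mathcal{U}$ into the nonempty subsets of a countable set. Hence $|\mathcal{U}| \leq \omega$, so $G$ has the Souslin property.

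With these two observations, the hypotheses of the preceding theorem are satisfied, and one concludes that $G$ is $\sigma$-compact. For the parenthetical addendum, I would recall the standard fact that any compact space is Lindel\"of and that a countable union of Lindel\"of spaces is Lindel\"of; hence $\sigma$-compactness immediately yields that $G$ is Lindel\"of.

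The main obstacle here is essentially nonexistent: all the real content lies in the preceding theorem, whose proof combines Zorn's Lemma, the Souslin-property bound on maximal cellular families, and Lemma~\ref{l15} to write $G = Y \cdot U$ as a countable union of products of compact sets. The corollary is then just a matter of noting that its hypotheses are strictly stronger than those of the theorem.
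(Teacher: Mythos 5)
Your proposal is correct and follows exactly the route the paper intends: the corollary is stated without proof precisely because local compactness gives local $\sigma$-compactness and separability gives the Souslin property, so the preceding theorem applies directly, and $\sigma$-compactness trivially yields Lindel\"ofness. Your verification of these implications is accurate and complete.
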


\begin{corollary}
If $G$ is a locally Lindel\"{o}f and separable rectifiable space, then $G$ is Lindel\"{o}f.
\end{corollary}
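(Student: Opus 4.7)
The plan is to sidestep the Zorn's lemma machinery used in the preceding theorem and argue directly from the countable dense set that separability supplies. First I would fix a countable dense set $Y=\{y_{n}:n\in\mathbb{N}\}$ in $G$ and, invoking local Lindel\"{o}fness, pick an open neighborhood $U$ of the right neutral element $e$ such that $U$ is Lindel\"{o}f as a subspace of $G$.

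Next I would apply Lemma~\ref{l15} to this particular pair $(Y,U)$ to conclude $G=Y\cdot U=\bigcup_{n\in\mathbb{N}} y_{n}\cdot U$. The key observation is that for each fixed $y_{n}$ the map $f_{y_{n}}(z)=p(y_{n},z)$ is a homeomorphism of $G$ onto itself (as recorded immediately after Theorem~\ref{t9}), so $y_{n}\cdot U=f_{y_{n}}(U)$ is homeomorphic to $U$ and hence Lindel\"{o}f. A countable union of Lindel\"{o}f subspaces is Lindel\"{o}f, which finishes the argument.

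I do not expect a real obstacle here: unlike the $\sigma$-compact theorem, separability directly hands us the countable indexing set without any maximality argument, and the homogeneity built into rectifiability through the homeomorphisms $f_{x}$ makes the transfer of the Lindel\"{o}f property from $U$ to each translate $y_{n}\cdot U$ automatic. The only point worth double-checking is that ``locally Lindel\"{o}f'' is interpreted in the same sense as ``locally $\sigma$-compact'' in the preceding theorem (each point has an open neighborhood with the property), which is the natural reading and the one needed to extract the Lindel\"{o}f $U$ at the right neutral element.
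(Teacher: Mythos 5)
Your argument is correct and is essentially the derivation the paper intends: the corollary is stated as an immediate consequence of Lemma~\ref{l15}, obtained exactly as you do by writing $G=Y\cdot U=\bigcup_{n}f_{y_{n}}(U)$ with $Y$ a countable dense set and $U$ a Lindel\"{o}f open neighborhood of $e$, each translate being Lindel\"{o}f since $f_{y_{n}}$ is a homeomorphism of $G$. As you note, separability replaces the Souslin-property/Zorn's-lemma step of the preceding theorem, so no maximality argument is needed.
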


\smallskip

Let $A$ be a subspace of a rectifiable space $G$. Then $A$ is called {\it a rectifiable subspace of $G$} if we have $p(A, A)\subset A$ and $q(A, A)\subset A$.

\begin{proposition}\label{p1}
Let $G$ be a rectifiable space. If $H$ is a rectifiable subspace of $G$, then $\overline{H}$ is also a rectifiable subspace of $G$.
\end{proposition}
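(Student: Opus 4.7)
The plan is to use the operational characterization of rectifiable spaces from Theorem~\ref{t9}, which supplies continuous maps $p, q : G^{2} \to G$. The hypothesis that $H$ is a rectifiable subspace then reads simply as $p(H \times H) \subset H$ and $q(H \times H) \subset H$, and I want to conclude the analogous inclusions with $\overline{H}$ in place of $H$.

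The key observation is the standard fact that in a product space one has $\overline{H \times H} = \overline{H} \times \overline{H}$, together with the general continuity principle that a continuous map sends the closure of a set into the closure of its image. Applying this to the continuous map $p$ gives
\[
p(\overline{H} \times \overline{H}) = p(\overline{H \times H}) \subset \overline{p(H \times H)} \subset \overline{H},
\]
and the identical argument applied to $q$ yields $q(\overline{H} \times \overline{H}) \subset \overline{H}$. Together these are precisely the conditions needed to conclude that $\overline{H}$ is a rectifiable subspace of $G$.

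There is essentially no obstacle here: once Theorem~\ref{t9} is invoked to furnish the continuous operations $p$ and $q$, the argument is a one-line application of continuity plus the product-closure identity. The only thing worth being careful about is not conflating the rectification $\varphi$ itself (which is a homeomorphism of $G \times G$) with the coordinate-projected maps $p = \pi_{2}\circ \varphi^{-1}$ and $q = \pi_{2}\circ \varphi$, since it is the latter pair whose continuity directly gives the desired closure containment.
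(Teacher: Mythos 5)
Your proof is correct and is essentially the paper's own argument: the paper verifies the same two inclusions $p(\overline{H}\times\overline{H})\subset\overline{H}$ and $q(\overline{H}\times\overline{H})\subset\overline{H}$ pointwise, taking nets in $H$ converging to $x,y\in\overline{H}$ and using continuity of $p$ and $q$, which is just a pointwise rendering of your closure inclusion $p(\overline{H\times H})\subset\overline{p(H\times H)}$ combined with $\overline{H\times H}=\overline{H}\times\overline{H}$. Your packaging via the standard closure--image identity is slightly cleaner but not a different route.
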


\begin{proof}
Take two points $x, y\in\overline{H}$. Then we shall show that $p(x, y)\in\overline{H}$ and $q(x, y)\in\overline{H}$.

Since $x, y\in \overline{H}$, there exist two nets $\{x_\alpha\}, \{y_\beta\}$ in $H$ such that $x_\alpha \to x, y_\beta \to y$. Since $p$ is continuous, $p(x, y)$ is a cluster point of $\{p(x_\alpha, y_\beta\}\subset H$. Hence $p(x, y)\in \overline{H}$.

Similarly, we can show that $q(x, y)\in\overline{H}$.
\end{proof}

\begin{lemma}\label{l6}
Let $G$ be a rectifiable space. If $V$ is an open rectifiable subspace of $G$, then $V$ is closed in $G$.
\end{lemma}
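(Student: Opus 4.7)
The plan is to mirror the classical argument that an open subgroup of a topological group is automatically closed: I will show $\overline{V}\subseteq V$. First, observe that $V$ must contain the right neutral element $e$. Assuming $V\neq\varnothing$ (the empty case is vacuous), pick any $v\in V$; by the rectifiable-subspace hypothesis $q(V,V)\subseteq V$ together with the identity $q(v,v)=e$ from Theorem~\ref{t9}, one gets $e\in V$, so $V$ is an open neighborhood of $e$.

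Next, take an arbitrary $x\in\overline{V}$ and a net $\{x_\alpha\}\subseteq V$ converging to $x$. Since the map $q:G\times G\to G$ is (jointly) continuous by Theorem~\ref{t9}, the net $\{q(x_\alpha,x)\}$ converges to $q(x,x)=e$. Because $V$ is an open neighborhood of $e$, one can fix an index $\alpha_0$ with $q(x_{\alpha_0},x)\in V$. Together with $x_{\alpha_0}\in V$ and $p(V,V)\subseteq V$, this yields $p(x_{\alpha_0},q(x_{\alpha_0},x))\in V$. Applying the identity $p(a,q(a,b))=b$ from Theorem~\ref{t9} with $a=x_{\alpha_0}$ and $b=x$ collapses the left-hand side to $x$; hence $x\in V$, and so $\overline{V}\subseteq V$.

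I do not expect any serious obstacle. The main technical point is to apply $q$ with the convergent net $\{x_\alpha\}$ in the \emph{first} coordinate and the fixed limit $x$ in the \emph{second}: this is precisely the configuration for which joint continuity of $q$ at $(x,x)$ gives $q(x_\alpha,x)\to e$, and the rectification identity $p(a,q(a,b))=b$ then recovers $x$ as an element of $p(V,V)\subseteq V$. Without this choice of ordering one would be stuck, since rectifiable spaces do not provide a left-inverse-like operation on the second coordinate.
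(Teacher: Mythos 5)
Your proof is correct and is essentially the paper's argument: both exploit continuity of $q(\cdot,x)$ at $x$ so that some $b\in V$ close to $x$ has $q(b,x)\in V$, and then recover $x=p(b,q(b,x))\in p(V,V)\subseteq V$; you phrase it with nets and a direct inclusion $\overline{V}\subseteq V$, while the paper uses a neighborhood $x\cdot W$ and argues by contradiction. Your explicit check that $e\in V$ (via $q(v,v)=e$) is a small point the paper leaves implicit, but it is the same route.
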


\begin{proof}
Suppose that $V$ is non-closed in $G$. Then $\overline{V}\setminus V\neq\emptyset.$ Take a point $x\in\overline{V}\setminus V$. Since $q(x, x)=e\in V$ and the continuity of $q$, there exists an open neighborhood $W$ of $e$ such that $q(x\cdot W, x)\subset V$. Put $U=x\cdot W$. Then $U$ is an open neighborhood of $x$, and hence $U\cap V\neq\emptyset$ since $x\in\overline{V}$. Therefore, there exist $a\in W$ and $b\in V$ such that $x\cdot a=b.$ Then we have $$x=p(x\cdot a, q(x\cdot a, x))=p(b, q(x\cdot a, x))\subset p(V, V)=V,$$ where $p(V, V)=V$ since $V$ is a rectifiable subspace of $G$. However, the point $x\not\in V$, which is a contradiction.
\end{proof}

\begin{theorem}\label{t4}
If $H$ is a locally compact rectifiable subspace of a rectifiable space $G$, then $H$ is closed in $G$.
\end{theorem}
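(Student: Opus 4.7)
The plan is to show that $H=\overline{H}$ by first upgrading $\overline{H}$ to a rectifiable space containing $H$ as an open and dense rectifiable subspace, and then invoking Lemma~\ref{l6}. Without loss of generality assume $H\neq\emptyset$. By Proposition~\ref{p1}, $\overline{H}$ is a rectifiable subspace of $G$, and since $e=q(h,h)\in\overline{H}$ for any $h\in H$ while $p$ and $q$ carry $\overline{H}\times\overline{H}$ into itself, the restriction of the rectification $\varphi$ to $\overline{H}\times\overline{H}$ is a self-homeomorphism satisfying the identities of Theorem~\ref{t9}. Thus $\overline{H}$ is itself a rectifiable space with right neutral element $e$, and $H$ is a dense rectifiable subspace of it.

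The core step is to locate an open neighborhood of $e$ in $\overline{H}$ that is contained in $H$. Using local compactness of $H$, I pick an open $U\subset H$ with $e\in U$ whose closure $K$ in $H$ is compact, and write $U=W\cap H$ for some open $W\subset G$. Because $K$ is compact in the Hausdorff space $G$, it is closed in $G$. Now, any $x\in W\cap\overline{H}$ is the limit of a net in $H$; this net eventually lies in $W\cap H=U\subset K$, and closedness of $K$ in $G$ gives $x\in K\subset H$. Hence $V:=W\cap\overline{H}$ is an open neighborhood of $e$ in $\overline{H}$ satisfying $V\subset H$.

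With $V$ in hand, I spread it around by rectification. For every $h\in H$ the map $p(h,\cdot)$ is a self-homeomorphism of $\overline{H}$ sending $e$ to $h$, so $p(h,V)$ is an open neighborhood of $h$ in $\overline{H}$; since $V\subset H$ and $H$ is a rectifiable subspace, $p(h,V)\subset p(H,H)\subset H$. Therefore $H$ is open in $\overline{H}$. Applying Lemma~\ref{l6} to the open rectifiable subspace $H$ of the rectifiable space $\overline{H}$, we conclude that $H$ is closed in $\overline{H}$; combined with density this forces $H=\overline{H}$, so $H$ is closed in $G$.

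The expected main obstacle is the second step, namely extracting an open-in-$\overline{H}$ neighborhood of $e$ that lies in $H$ from mere local compactness of $H$. Once the Hausdorff ``compact implies closed'' argument together with convergence of nets from $H$ through $\overline{H}$ is set up correctly, the remaining work is a routine transfer of the classical topological-group argument, with the rectification playing the role of left translation in $\overline{H}$.
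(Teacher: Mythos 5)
Your proof is correct and takes essentially the same route as the paper: $\overline{H}$ is a rectifiable subspace by Proposition~\ref{p1}, $H$ is shown to be open in $\overline{H}$, and Lemma~\ref{l6} together with density yields $H=\overline{H}$. The only difference is that where the paper simply cites \cite[Theorem 3.3.9]{E1989} (a dense locally compact subspace of a Hausdorff space is open), you prove that openness by hand at $e$ and then translate it around with $p(h,\cdot)$, which is a valid substitute.
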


\begin{proof}
Let $K=\overline{H}$. Then $K$ is a rectifiable subspace of $G$ by Proposition~\ref{p1}. Since $H$ is a dense locally compact subspace of $K$, it follows from \cite[Theorem 3.3.9]{E1989} that $H$ is open in $K$. By Lemma~\ref{l6}, the set $H$ is closed, and hence $K=H$.
\end{proof}

The following Lemma maybe was proved somewhere.

\begin{lemma}\label{compact-character}
Let $F$ be a compact subset of a space $X$ and have a countable base $\{U_n\}$ with $\overline{U_{n+1}}\subset U_n$ in $X$, and let $H=\cap_nV_n$ ($V_{n+1}\subset V_n$ and each $V_n$ is open in $F$) is a compact $G_\delta$-set of $F$. For $n\in \mathbb{N}$, let $W_n$ be an open set in $X$ such that $V_n = W_n\cap F$, $W_n\subset U_n$, $\overline{W_{n+1}}\subset W_n$, then $\{W_n\}$ is a countable base at $H$ in $X$.

\end{lemma}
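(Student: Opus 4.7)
The plan is to prove the lemma by contradiction: assume some open neighborhood $O$ of $H$ in $X$ contains no $W_n$, then extract a cluster point that ought to lie in $H$ but, by construction, cannot.

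First, I would set up the contradiction by choosing, for each $n \in \mathbb{N}$, a point $x_n \in W_n \setminus O$. The next step is to argue that the sequence $\{x_n\}$ has a cluster point inside $F$. Suppose it did not; then for every $y \in F$ there would be an open neighborhood $O(y)$ in $X$ meeting only finitely many $x_n$. By compactness of $F$, finitely many such $O(y_1), \ldots, O(y_k)$ would cover $F$, and their union $O^*$ would be an open neighborhood of $F$ in $X$ meeting only finitely many $x_n$. Since $\{U_n\}$ is a countable base at $F$, some $U_N \subset O^*$, but $x_n \in W_n \subset U_n \subset U_N \subset O^*$ for all $n \geq N$, a contradiction. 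So there is a cluster point $y \in F$.

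Next I would argue that $y \in H$. Fix any $n$. For every $k > n$ we have $x_k \in W_k \subset W_{n+1}$, so $y$ is a cluster point of a set contained in $W_{n+1}$, whence $y \in \overline{W_{n+1}} \subset W_n$. Combined with $y \in F$, this gives $y \in W_n \cap F = V_n$. Since $n$ was arbitrary, $y \in \bigcap_n V_n = H$. Finally, because $O$ is open and each $x_n$ lies in the closed set $X \setminus O$, every cluster point of $\{x_n\}$ also lies in $X \setminus O$, so $y \notin O$, contradicting $H \subset O$.

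I do not expect a major obstacle here; the only subtle step is the cluster-point extraction, which needs the compactness of $F$ together with the fact that $\{U_n\}$ is a \emph{base at $F$} rather than merely a descending sequence of neighborhoods. The hypothesis $\overline{W_{n+1}} \subset W_n$ is exactly what lets the cluster point $y$ be pinned into each $W_n$ and hence into $H$, so one should make explicit use of it in the second paragraph of the write-up.
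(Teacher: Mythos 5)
Your proof is correct and follows essentially the same route as the paper: assume some open $O\supset H$ contains no $W_n$, pick $x_n\in W_n\setminus O$, use compactness of $F$ together with the fact that $\{U_n\}$ is a base at $F$ to produce a cluster point, and pin it into $H$ via $\overline{W_{n+1}}\subset W_n$, contradicting $H\subset O$. Your cluster-point extraction is in fact a bit cleaner than the paper's (which splits into cases according to whether $\{x_n\}\cap F$ is infinite and insists the $x_n$ be distinct), but the substance of the argument is the same.
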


\begin{proof}
$H=\cap_n W_n=\cap_n \overline{W_n}$. Suppose that $\{W_n\}$ is not a countable base at $H$, then there is an open subset $U$ of $X$ such that $H\subset U$ and $W_n\setminus U\neq \emptyset$. By induction, choose $x_n\in W_n\setminus U$ with $x_i\neq x_j$ if $i\neq j$. Since $x_n\in U_n$ for each $n\in \mathbb{N}$, then $\{x_n\}$ has a cluster point $x$. In fact, if $\{x_n\}\cap F$ is infinite, then $\{x_n\}$ has a cluster point in $F$ since $F$ is compact; if $\{x_n\}\cap F$ is finite, without loss generality, we assume  $\{x_n\}\cap F=\emptyset$. Since $F\subset X\setminus \{x_n\}$ which is open in $X$, there is $n_0\in\mathbb{N}$ such that $F\subset U_n\subset X\setminus \{x_n\}$ for $n>n_0$. This is a contradiction since $x_n\in U_n$. Therefore, we have $x\in \overline{W_n}$ for each $n$, then $x\in H\subset U$, and hence $U$ contains infinitely many $x_n's$, which is a contradiction.

\end{proof}

Next, we shall show that, for each locally compact rectifiable space, there exists a compact rectifiable subspace with a countable character.

\begin{lemma}\label{l17}
Let $G$ be a rectifiable space and $F$ be a compact subset of $G$ containing $e$ and having a countable base $\{U_{n}: n\in\mathbb{N}\}$ in $G$. Assume that a sequence $\zeta=\{V_{n}: n\in\mathbb{N}\}$ of open neighborhoods of $e$ in $G$ such that $\overline{V_{n+1}}\cdot\overline{V_{n+1}}\subset V_{n}\cap U_{n}$ and $q(V_{n+1}, V_{n+1})\subset V_{n}$. Then $H=\bigcap_{n\in\mathbb{N}}V_{n}$ is a compact rectifiable subspace of $G$, $H\subset F$ and $\zeta$ is a base for $G$ at $H$.
\end{lemma}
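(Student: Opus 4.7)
The plan is to establish the four claims ($H\subset F$, $H$ compact, $H$ a rectifiable subspace, and $\zeta$ a base at $H$) in that order, using the hypotheses essentially just by unwinding them, with the final assertion reduced to Lemma~\ref{compact-character}.

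First I would extract the basic nesting relations from the hypotheses. Since each $V_n$ is a neighborhood of $e$, we have $e\in V_{n+1}\subset \overline{V_{n+1}}$, so
\[
\overline{V_{n+1}}=\{e\}\cdot \overline{V_{n+1}}\subset \overline{V_{n+1}}\cdot\overline{V_{n+1}}\subset V_n\cap U_n,
\]
giving both $\overline{V_{n+1}}\subset V_n$ and $\overline{V_{n+1}}\subset U_n$. Passing to a subsequence of $\{U_n\}$ if necessary, I may also assume $\overline{U_{n+1}}\subset U_n$ and hence (by regularity and compactness of $F$) that $F=\bigcap_n U_n$. From $\overline{V_{n+1}}\subset U_n$ it then follows that $H=\bigcap_n V_n\subset\bigcap_n U_n=F$, proving $H\subset F$. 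Since $H=\bigcap_n\overline{V_n}$ is closed in $G$ and contained in the compact set $F$, $H$ is compact.

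Next I would verify that $H$ is a rectifiable subspace. Fix $x,y\in H$. For every $n$, $x,y\in V_{n+1}$, so $p(x,y)\in \overline{V_{n+1}}\cdot\overline{V_{n+1}}\subset V_n$ and $q(x,y)\in q(V_{n+1},V_{n+1})\subset V_n$. Intersecting over $n$ gives $p(x,y),q(x,y)\in H$, so $p(H,H)\subset H$ and $q(H,H)\subset H$.

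For the last assertion I would invoke Lemma~\ref{compact-character} after a harmless reindexing. Set $W_n:=V_{n+1}$ (open in $G$) and $\widetilde V_n:=V_{n+1}\cap F$ (open in $F$). Then $\widetilde V_{n+1}\subset\widetilde V_n$ because $V_{n+2}\subset\overline{V_{n+2}}\subset V_{n+1}$; $H=\bigcap_n(V_{n+1}\cap F)=\bigcap_n V_{n+1}$ (since $H\subset F$), so $H$ is a compact $G_\delta$ in $F$; and $W_n\cap F=\widetilde V_n$, $W_n=V_{n+1}\subset\overline{V_{n+1}}\subset U_n$, $\overline{W_{n+1}}=\overline{V_{n+2}}\subset V_{n+1}=W_n$. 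Thus all the hypotheses of Lemma~\ref{compact-character} are met, so $\{V_{n+1}\}_{n\in\mathbb{N}}$ is a countable base at $H$ in $G$; a fortiori $\zeta=\{V_n\}_{n\in\mathbb{N}}$ is.

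I do not expect a serious obstacle here: the multiplicative and inverse-type closure properties of $H$ follow immediately from the two chosen inclusions on $p$ and $q$, and compactness plus the ``base'' property are almost pure point-set topology once Lemma~\ref{compact-character} is available. The only care needed is the reindexing $W_n=V_{n+1}$ so that the $U_n$-containment in Lemma~\ref{compact-character} can be applied.
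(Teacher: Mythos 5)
Your argument is essentially the paper's proof: derive $\overline{V_{n+1}}\subset V_n\cap U_n$, conclude that $H=\bigcap_n V_n=\bigcap_n\overline{V_n}$ is closed and contained in $\bigcap_n U_n=F$ (hence compact), verify $p(H,H)\subset H$ and $q(H,H)\subset H$ directly from the two hypothesized inclusions, and then invoke Lemma~\ref{compact-character}; the paper does exactly this, without your explicit reindexing $W_n=V_{n+1}$. Two small points need fixing. First, you justify $\overline{V_{n+1}}\subset V_n\cap U_n$ by writing $\overline{V_{n+1}}=\{e\}\cdot\overline{V_{n+1}}$, which uses $e$ as a \emph{left} identity; in a rectifiable space $e$ is only a right neutral element ($x\cdot e=x$, while $p(e,y)=y$ may fail), so the correct version is $\overline{V_{n+1}}=\overline{V_{n+1}}\cdot e\subset\overline{V_{n+1}}\cdot\overline{V_{n+1}}\subset V_n\cap U_n$, using $e\in\overline{V_{n+1}}$ as the right factor. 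Second, ``passing to a subsequence of $\{U_n\}$'' to get $\overline{U_{n+1}}\subset U_n$ is shaky: re-enumerating the $U_n$ can destroy the containment $W_n=V_{n+1}\subset U_n$ that you feed into Lemma~\ref{compact-character} (and an increasing-index subsequence with nested closures need not exist). This is harmless — the paper glosses the same point — because the nestedness actually used in the proof of Lemma~\ref{compact-character} can be supplied by the $W_n$ themselves: for any open $U\supset F$ some $U_m\subset U$, and a putative sequence $x_n\in W_n\setminus U$ has its tail $\{x_n:n\ge m\}$ inside $W_m\subset U_m$, so the cluster-point argument goes through with the original, not necessarily nested, base $\{U_n\}$. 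With these repairs your proof is complete and coincides with the paper's.
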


\begin{proof}
Obviously, we have $\overline{V_{n+1}}\subset V_{n}$ for each $n\in \mathbb{N}$. We first claim that $H$ is a compact rectifiable subspace of $G$.

Indeed, it is easy to see that $H=\bigcap_{n\in\mathbb{N}}V_{n}=\bigcap_{n\in\mathbb{N}}\overline{V_{n}}$, and hence $H$ is closed in $G$. For each $x, y\in H$, we have $x, y\in V_{n}$ for each $n\in \mathbb{N}$. Then, for each $n\in \mathbb{N}$,  we have $x\cdot y\in V_{n}$ since $\overline{V_{n+1}}\cdot\overline{V_{n+1}}\subset V_{n}$. Therefore, $x\cdot y\in H$. Since $q(V_{n+1}, V_{n+1})\subset V_{n}$, we have $q(x, y)\in H$. Therefore, $H$ is a rectifiable closed subspace. Obviously, $H\subset \bigcap_{n\in\mathbb{N}}U_{n}=F$. Thus $H$ is compact. By Lemma ~\ref{compact-character}, $\zeta$ is a base for $G$ at $H$.

\end{proof}

\begin{proposition}\label{p2}
Let $G$ be a rectifiable space with point-countable type. If $O$ is an open neighborhood of $e$, then there exists a compact rectifiable subspace $H$ of countable character in $G$ satisfying $H\subset O$.
\end{proposition}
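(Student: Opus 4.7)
The strategy is to apply Lemma~\ref{l17} after trimming both the outer base of a suitable compact set containing $e$ and the sequence $\{V_n\}$ to fit inside $O$. First, since $G$ has point-countable type, I fix a compact set $F$ with $e\in F$ admitting a countable outer base in $G$, and then use the regularity of $G$ to refine it to a decreasing outer base $\{U_n:n\in\mathbb{N}\}$ of $F$ in $G$ with $\overline{U_{n+1}}\subset U_n$, which is the form demanded by the hypotheses of Lemmas~\ref{compact-character} and~\ref{l17}.

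Next, I build a sequence $\{V_n:n\in\mathbb{N}\}$ of open neighborhoods of $e$ by recursion. Set $V_1=U_1$, and suppose $V_n$ has been chosen. Since $p(e,e)=e\cdot e=e\in V_n\cap U_n$ and $q(e,e)=e\in V_n$, the joint continuity of $p$ and $q$ provides an open neighborhood $W$ of $e$ with $W\cdot W\subset V_n\cap U_n$ and $q(W,W)\subset V_n$. By the regularity of $G$ and the fact that $e\in W\cap O$, I select an open $V_{n+1}$ with $e\in V_{n+1}$ and $\overline{V_{n+1}}\subset W\cap O$. Because the image of a subset under $p$ (resp. $q$) is contained in the image of any larger set, one gets $\overline{V_{n+1}}\cdot\overline{V_{n+1}}\subset W\cdot W\subset V_n\cap U_n$ and $q(V_{n+1},V_{n+1})\subset q(W,W)\subset V_n$, exactly the hypotheses of Lemma~\ref{l17}.

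Finally, Lemma~\ref{l17} yields that $H:=\bigcap_{n\in\mathbb{N}}V_n$ is a compact rectifiable subspace of $G$ contained in $F$, with $\{V_n\}$ a base at $H$ in $G$; thus $H$ has countable character. Since $V_{n+1}\subset O$ for every $n\geq 1$, in particular $H\subset V_2\subset O$, which completes the argument. The only delicate point is arranging the three constraints on $V_{n+1}$ simultaneously at each step, and this is routinely handled by combining joint continuity of $p$ and $q$ at $(e,e)$ with regularity of $G$; I do not anticipate any serious obstacle.
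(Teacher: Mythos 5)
Your proposal is correct and follows essentially the same route as the paper: pick a compact $F\ni e$ of countable character (point-countable type plus homogeneity), recursively build open neighborhoods $V_n$ of $e$ with $\overline{V_{n+1}}\cdot\overline{V_{n+1}}\subset V_n\cap U_n$ and $q(V_{n+1},V_{n+1})\subset V_n$ via continuity of $p,q$ at $(e,e)$ and regularity, and apply Lemma~\ref{l17} to $H=\bigcap_n V_n$. The only cosmetic difference is that the paper imposes $V_1\subset O$ while you enforce $\overline{V_{n+1}}\subset O$ and conclude $H\subset V_2\subset O$, which is equally valid.
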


\begin{proof}
Since $G$ is of point-countable type, there exists a compact subset of $G$ having a countable base in $G$.
By the homogeneity of $G$, we may assume that $e\in F$. Let $\{U_{n}: n\in\mathbb{N}\}$ be a countable base for $G$ at $F$. We define by induction a sequence $\{V_{n}: n\in\mathbb{N}\}$ of open neighborhoods of $e$ in $G$ satisfying the following conditions:

(1) $V_{1}\subset O;$

(2) $\overline{V_{n+1}}\cdot\overline{V_{n+1}}\subset V_{n}\cap U_{n}$ for each $n\in \mathbb{N}$;

(3) $q(V_{n+1}, V_{n+1})\subset V_{n}$ for each $n\in \mathbb{N}$.

 Put $H=\bigcap_{n\in \mathbb{N}}V_{n}$. Then $H\subset O$ by (1). It follows from Lemma~\ref{l17} that $H$ is a compact rectifiable subspace of $G$ and that $\{V_{n}: n\in\mathbb{N}\}$ is a base of $G$ at $H$.
\end{proof}

Since each locally compact space is of point-countable type, we have the following corollary.

\begin{corollary}
Let $G$ be a locally compact rectifiable space. If $O$ is an open neighborhood of $e$, then there exists a compact rectifiable subspace $H$ of countable character in $G$ satisfying $H\subset O$.
\end{corollary}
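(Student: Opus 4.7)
The plan is to derive this corollary directly from Proposition~\ref{p2}: once one knows that a locally compact Hausdorff space $G$ is of point-countable type, Proposition~\ref{p2} applied to $G$ with the given neighborhood $O$ of $e$ immediately produces the required compact rectifiable subspace $H \subset O$ of countable character. So the only task is to verify the parenthetical remark preceding the corollary, namely that every locally compact Hausdorff space is of point-countable type.

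To check this, I would fix the point $e$ (the choice is irrelevant by homogeneity of $G$) and use local compactness together with regularity to build inductively a decreasing sequence $U_{1}\supset U_{2}\supset\cdots$ of open neighborhoods of $e$ such that each closure $\overline{U_{n}}$ is compact and $\overline{U_{n+1}}\subset U_{n}$: start with any $U_{1}$ having compact closure (existence by local compactness), and for each $n$, use regularity to find $U_{n+1}$ with $e\in U_{n+1}\subset\overline{U_{n+1}}\subset U_{n}$. Set $F=\bigcap_{n}\overline{U_{n}}$; this is a compact subset of $\overline{U_{1}}$ containing $e$.

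The key claim to verify is that $\{U_{n}\}$ is a neighborhood base of $F$ in $G$. For any open $W\supset F$, the closed sets $\overline{U_{n}}\setminus W$ form a decreasing family inside the compact space $\overline{U_{1}}$ with empty intersection (since $F\setminus W=\emptyset$), so by the finite intersection property some $\overline{U_{n}}\setminus W$ is already empty, whence $U_{n}\subset\overline{U_{n}}\subset W$. This exhibits $F$ as a compact subset containing $e$ with countable character in $G$, so $G$ is of point-countable type and Proposition~\ref{p2} finishes the argument. I anticipate no substantive obstacle, since all of the real work has been packaged into Lemma~\ref{l17} and Proposition~\ref{p2}.
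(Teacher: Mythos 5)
Your proposal is correct and follows exactly the paper's route: the paper deduces the corollary from Proposition~\ref{p2} by invoking the standard fact that every locally compact (Hausdorff) space is of point-countable type. The only difference is that you spell out the verification of that standard fact (the nested sequence $U_{n+1}$ with $\overline{U_{n+1}}\subset U_{n}$, $\overline{U_{1}}$ compact, and the finite-intersection-property argument showing $\{U_{n}\}$ is an outer base for $F=\bigcap_{n}\overline{U_{n}}$), which the paper simply cites without proof; your verification is sound.
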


\begin{definition}
Let $X$ be a topological space. For $i=1, 4$ we say that $X$ is an {\it $\alpha_{i}$-space} if for each countable family $\{S_{n}: n\in\mathbb{N}\}$ of sequences converging to some point $x\in X$ there is a sequence $S$ converging to $x$ such that:

($\alpha_{1}$) $S_{n}\setminus S$ is finite for all $n\in \mathbb{N}$;

($\alpha_{4}$) $S_{n}\cap S\neq\emptyset$ for infinitely many $n\in \mathbb{N}$.
\end{definition}

Obviously, we have $\alpha_{1}\Rightarrow\alpha_{4}$.

Let $\omega^{\omega}$ denote the family of all functions from $\mathbb{N}$ into $\mathbb{N}$. For $f, g\in\omega^{\omega}$ we write $f<^{\ast}g$ if $f(n)<g(n)$ for all but finitely many $n\in \mathbb{N}$. A family $\mathscr{F}$ is {\it bounded} if there is a $g\in\omega^{\omega}$ such that $f<^{\ast}g$ for all $f\in\mathscr{F}$, and is {\it unbounded} otherwise. We denote by $\flat$ the smallest cardinality of an unbounded family in $\omega^{\omega}$. It is easy to see that $\omega <\flat\leq \mathrm{c}$, where $\mathrm{c}$ denotes the cardinality of the continuum.

\begin{lemma}\label{l16}\cite{NT}
For $i\in\{1, 4\}$, $D^{\tau}$ is an $\alpha_{i}$-space if and only if $\tau <\flat$, where $D$ is the discrete two-points space $\{0, 1\}$.
\end{lemma}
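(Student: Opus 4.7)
The plan is to exploit the combinatorial description of convergent sequences in $D^\tau$: a sequence $\{x_n\}$ converges to $x$ in $D^\tau$ if and only if for every coordinate $\alpha < \tau$ there is some index after which $x_n(\alpha) = x(\alpha)$. Since $D^\tau$ is a topological group, we may take the limit to be the origin $0$, and then a sequence $\{x_n\} \to 0$ is encoded by its \emph{support-height} function $f \in \omega^\tau$ defined by $f(\alpha) = \min\{n : x_m(\alpha) = 0 \text{ for all } m \geq n\}$. More generally, a countable set $S \subseteq D^\tau$ admits an enumeration converging to $0$ iff for each $\alpha < \tau$ only finitely many members of $S$ are nonzero at $\alpha$. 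This translation drives both directions.

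For $\tau < \flat \Rightarrow \alpha_1$ (which implies $\alpha_4$): given sequences $\{S_k\}_{k \in \omega}$ converging to $0$ with support-height functions $f_k$, transpose to obtain for each $\alpha$ a function $g_\alpha \in \omega^\omega$ with $g_\alpha(k) = f_k(\alpha)$. Since $|\{g_\alpha\}| \leq \tau < \flat$, the family is bounded: choose $h \in \omega^\omega$ with $g_\alpha <^{\ast} h$ for every $\alpha$. Put $S = \bigcup_k \{x_{k,n} \in S_k : n \geq h(k)\}$; then $S_k \setminus S$ is finite for every $k$, and $S$ converges to $0$ because an element $x_{k,n} \in S$ can be nonzero at $\alpha$ only when $h(k) \leq n < f_k(\alpha) = g_\alpha(k)$, forcing $h(k) \leq g_\alpha(k)$, which happens for at most finitely many $k$ (and finitely many $n$ for each such $k$).

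For the contrapositive of the hard direction, assume $\tau \geq \flat$ and exhibit failure of $\alpha_4$. Fix an unbounded family $\{f_\alpha\}_{\alpha < \flat} \subseteq \omega^\omega$ and replace each $f_\alpha$ by $\hat f_\alpha(n) = n + \max_{i \leq n} f_\alpha(i)$; the new family is still unbounded (any $<^{\ast}$-dominator of $\{\hat f_\alpha\}$ dominates $\{f_\alpha\}$) and consists of strictly increasing functions. Since $\flat \leq \tau$, choose $\flat$ many coordinates of $D^\tau$ indexed by $\{f_\alpha\}$ and a further countable block of coordinates indexed by $\omega \times \omega$. Define $x_{k,n} \in D^\tau$ to be $1$ at coordinate $\alpha$ iff $n \leq \hat f_\alpha(k)$, to be $1$ at coordinate $(k,n)$, and $0$ elsewhere; this makes the $x_{k,n}$ pairwise distinct, and each $S_k = \{x_{k,n}\}_n$ converges to $0$.

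Now suppose toward contradiction that $\alpha_4$ provides $S = \{y_m\} \to 0$ intersecting $S_k$ for every $k$ in some infinite $J \subseteq \omega$, via $y_{m_k} = x_{k,n_k}$. Convergence of $S$ at coordinate $\alpha$ forces $\{k \in J : n_k \leq \hat f_\alpha(k)\}$ to be finite, i.e.\ $\hat f_\alpha(k) < n_k$ for cofinitely many $k \in J$. The main obstacle is lifting this bound from $J$ to all of $\omega$, and this is exactly where strict monotonicity of $\hat f_\alpha$ pays off: enumerating $J = \{j_0 < j_1 < \cdots\}$, define $h : \omega \to \omega$ by $h(k) = n_{j_i}$ for the least $j_i \geq k$. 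For each $\alpha$, all but finitely many $k$ satisfy $\hat f_\alpha(k) \leq \hat f_\alpha(j_i) < n_{j_i} = h(k)$, so $\hat f_\alpha <^{\ast} h$ for every $\alpha < \flat$, contradicting unboundedness of $\{\hat f_\alpha\}$. Together with $\alpha_1 \Rightarrow \alpha_4$, the two equivalences follow.
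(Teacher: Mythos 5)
The paper does not prove this lemma at all -- it is quoted from [NT] (Nogura--Shakhmatov) -- so there is no in-paper argument to compare against; judged on its own, your proof is correct and is essentially the standard argument behind the cited result. The translation of convergence in $D^{\tau}$ into eventual-domination data is accurate in both directions: for $\tau<\flat$ the transposed family $\{g_{\alpha}\}_{\alpha<\tau}$ is bounded because $\flat$ is the least size of an unbounded family, and your tail-set $S$ meets each coordinate $\alpha$ in only finitely many nonzero points since $h(k)\le n<g_{\alpha}(k)$ can occur for only finitely many $k$; for $\tau\ge\flat$ the counterexample works because the points $x_{k,n}$ are made pairwise distinct by the auxiliary $\omega\times\omega$ block, each $S_{k}$ converges to $0$, and an $\alpha_{4}$-witness would yield, via the monotonized functions $\hat f_{\alpha}$ and your function $h$ built from the selection $k\mapsto n_{k}$ on the infinite set $J$, a single $h$ with $\hat f_{\alpha}<^{\ast}h$ for all $\alpha<\flat$, contradicting unboundedness (the monotonization is exactly what lets you pass from domination along $J$ to domination on all of $\omega$). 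The only points you leave implicit are routine: converting the countable set $S$ into an injectively enumerated sequence (and the degenerate case where $S$ is finite), extracting from the $\alpha_{4}$-witness the convergent set of intersection points $\{x_{k,n_{k}}:k\in J\}$, and the observation that it suffices to run the counterexample on a subfamily of $\flat+\omega\le\tau$ coordinates; none of these affects correctness.
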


\begin{theorem} The following conditions are equivalent:
\begin{enumerate}
\item Every compact rectifiable space with the $\alpha_{1}$-property is metrizable;

\item Every  locally compact rectifiable space with the $\alpha_{4}$-property is metrizable;

\item $\flat=\omega_{1}$.
\end{enumerate}
\end{theorem}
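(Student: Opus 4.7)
The implication (2) $\Rightarrow$ (1) is immediate, since every compact space is locally compact and the $\alpha_{1}$-property trivially implies the $\alpha_{4}$-property (if $S_{n}\setminus S$ is finite for each $n$, then in particular $S_{n}\cap S\neq\emptyset$ for every $n$). For (1) $\Rightarrow$ (3), from $\omega<\flat\leq\mathrm{c}$ it suffices to rule out $\flat>\omega_{1}$. If this failed, Lemma~\ref{l16} would make $D^{\omega_{1}}$ an $\alpha_{1}$-space; being a compact topological group it is in particular a compact rectifiable space, so (1) would force its metrizability. But $D^{\omega_{1}}$ has weight $\omega_{1}$, so is not metrizable, a contradiction. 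Hence $\flat=\omega_{1}$.

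The substantive direction is (3) $\Rightarrow$ (2). Assume $\flat=\omega_{1}$ and let $G$ be a locally compact rectifiable $\alpha_{4}$-space. Since locally compact spaces have point-countable type, Proposition~\ref{p2} yields a compact rectifiable subspace $H$ containing $e$ and having countable character in $G$. The $\alpha_{4}$-property passes to the closed subset $H$: given sequences $\{S_{n}\}\subset H$ converging to $x\in H$ (which we may take to be pairwise disjoint and missing $x$), the $\alpha_{4}$-property of $G$ produces a sequence $S$ converging to $x$ which meets infinitely many $S_{n}$; then $S\cap H$ is an infinite subsequence of $S$ lying in $H$, still converges to $x$, and witnesses $\alpha_{4}$ inside $H$. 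Thus $H$ is a compact rectifiable $\alpha_{4}$-space, and the main task is to deduce that $H$ is metrizable.

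Granting the metrizability of $H$, I transfer it back to $G$ as follows. Let $\{U_{n}\}$ be a decreasing local base at $e$ in $H$ and let $\{W_{n}\}$ be a countable $G$-neighborhood base of $H$ obtained from Proposition~\ref{p2}. Choose open $\widetilde{U}_{n}\subset G$ with $\widetilde{U}_{n}\cap H=U_{n}$, and set $Y_{n}=\widetilde{U}_{n}\cap W_{n}$; then $\bigcap_{n}Y_{n}=(\bigcap_{n}\widetilde{U}_{n})\cap H=\bigcap_{n}U_{n}=\{e\}$, so $\{e\}$ is a $G_{\delta}$ in $G$. Together with local compactness this upgrades to countable character at $e$ (pseudocharacter equals character at any point of a compact neighborhood), and homogeneity then makes $G$ first-countable. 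A first-countable locally compact rectifiable space is metrizable by a Birkhoff--Kakutani-style construction: iterate $p$ and $q$ from Theorem~\ref{t9} to build symmetric neighborhoods $V_{n}$ of $e$ with $V_{n+1}\cdot V_{n+1}\subset V_{n}$ and $q(V_{n+1},V_{n+1})\subset V_{n}$, and then manufacture a compatible left-invariant metric.

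The step I expect to be the main obstacle is the remaining claim: under $\flat=\omega_{1}$, a compact rectifiable $\alpha_{4}$-space $H$ must be metrizable. My plan is the contrapositive: if $H$ is non-metrizable, then $w(H)\geq\omega_{1}$, and I aim to embed $D^{\omega_{1}}$ as a closed subset of $H$. In the classical compact-group setting such an embedding is standard, but here the lack of associativity for $p$ means I cannot copy that argument directly; instead, I plan a transfinite induction of length $\omega_{1}$ that uses Theorem~\ref{t9} together with repeated applications of Proposition~\ref{p2} inside successively smaller compact rectifiable subspaces, carefully bookkeeping both $p$ and $q$ to split neighborhoods of $e$ into dyadic pairs of clopen sets, with non-metrizability ensuring the construction cannot stabilize at any countable stage. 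Once a closed copy of $D^{\omega_{1}}$ is produced in $H$, Lemma~\ref{l16} with $\tau=\omega_{1}=\flat$ says $D^{\omega_{1}}$ is not an $\alpha_{4}$-space, and the heredity established above transfers this failure to $H$, delivering the required contradiction.
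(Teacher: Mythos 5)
Your treatment of $(2)\Rightarrow(1)$ and $(1)\Rightarrow(3)$ matches the paper and is fine, and your reduction of $(3)\Rightarrow(2)$ to the compact subspace $H$ given by Proposition~\ref{p2}, plus the transfer of first countability back to $G$, is essentially sound. But the step you yourself flag as ``the main obstacle'' --- that under $\flat=\omega_{1}$ a non-metrizable compact rectifiable space must contain a copy of $D^{\omega_{1}}$ --- is exactly where your proposal has a genuine gap: you offer only a plan (a transfinite induction of length $\omega_{1}$ ``splitting neighborhoods of $e$ into dyadic pairs of clopen sets''), not an argument, and this plan cannot work as stated. A compact rectifiable space may be connected (the $7$-sphere $S_{7}$ is the standard example), so there are no nontrivial clopen sets to split into, and no amount of bookkeeping with $p$ and $q$ will produce them. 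The missing idea, which is the heart of the paper's proof, is Uspenskij's theorem that compact rectifiable spaces are \emph{dyadic} (\cite{V1989, V1990}); combined with the classical fact that every non-metrizable dyadic compactum contains a topological copy of $D^{\omega_{1}}$, this gives the embedding at once, and then Lemma~\ref{l16} together with the hereditarity of the $\alpha_{4}$-property yields the contradiction with $\flat=\omega_{1}$, exactly as you intended to finish.

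Two smaller remarks. Your transfer of metrizability from $H$ to $G$ (showing $\{e\}$ is $G_{\delta}$, upgrading pseudocharacter to character via local compactness, then using homogeneity) is a workable variant of the paper's argument, which instead verifies directly via Lemma~\ref{compact-character} that suitable open sets $W_{n}$ form a base at $e$. However, your closing step ``a first-countable locally compact rectifiable space is metrizable by a Birkhoff--Kakutani-style construction with a left-invariant metric'' should not be improvised: rectifiable spaces are not groups and the construction is not routine; the correct move is to invoke Gul$'$ko's theorem \cite{G1996} that a rectifiable space of countable ($\pi$-)character is metrizable, which is what the paper relies on.
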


\begin{proof}
The implication $(2)\Rightarrow (1)$ is trivial.

$(1)\Rightarrow (3)$. Since $D^{\omega_{1}}$ is a nonmetirzable compact group, so it cannot be an $\alpha_{1}$-space by (1). It follows from Lemma~\ref{l16} that $\flat\leq\omega_{1}$. Since $\flat >\omega$, it follows that $\flat=\omega_{1}$.

$(3)\Rightarrow (2)$. Suppose that $\flat=\omega_{1}$, and that $G$ is a locally compact $\alpha_{4}$-rectifiable space. Next, we shall prove that $G$ is metrizable. By Proposition~\ref{p2}, there exists a compact rectifiable subspace $F$ of $G$ which has a countable character at $F$ in $G$. We claim that $F$ is metrizable. If not, as proved V.V. Uspenskij in \cite{V1989, V1990}, compact rectifiable spaces are dyadic, and hence the space $F$ contains a subspace homeomorphic to $D^{\omega_{1}}$. Since a subspace of an $\alpha_{4}$-space is an $\alpha_{4}$-space, the subspace $D^{\omega_{1}}$ is an $\alpha_{4}$-space. Then, it follows from Lemma~\ref{l16} that $\omega_{1}<\flat$, which is a contradiction. Therefore, the space $F$ is metrizable.

Let $\{U_{n}: n\in\mathbb{N}\}$ be a countable base of $G$ at $F$, where $\overline{U_{n+1}}\subset U_{n}$ for each $n\in \mathbb{N}$. Let $\{V_{n}: n\in\mathbb{N}\}$ be a countable neighborhoods base at the point $e$ in $F$, where the closure $\mbox{cl}_{F}V_{n+1}\subset V_{n}$ for each $n\in \mathbb{N}$. For each $n\in \mathbb{N}$, there exists an open subset $W_{n}$ of $G$ such that $V_{n}=W_{n}\cap F$, $W_{n}\subset \overline{U_{n}}$ and $\overline{W_{n+1}}\subset W_{n}$. Put $\gamma=\{W_{n}: n\in \mathbb{N}\}$.
By Lemma~\ref{compact-character}, the family $\gamma$ is a neighborhood base in $G$ at point $e$.
hence the space $G$ is first-countable, and therefore, it is metrizable.
\end{proof}

\begin{corollary}\cite{NT}
The following conditions are equivalent:
\begin{enumerate}
\item Every compact topological group with the $\alpha_{1}$-property is metrizable;

\item Every  locally compact topological group with the $\alpha_{4}$-property is metrizable;

\item $\flat=\omega_{1}$.
\end{enumerate}
\end{corollary}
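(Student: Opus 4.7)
The plan is to deduce the corollary directly from the preceding theorem, using the observation (recorded in the Introduction) that every topological group $G$ is a rectifiable space under the rectification $\varphi(x, y) = (x, x^{-1}y)$. Consequently every compact (resp.\ locally compact) topological group is in particular a compact (resp.\ locally compact) rectifiable space, so conditions (1) and (2) of the theorem formally imply the corresponding conditions of the corollary. What remains is to verify the three equivalences within the restricted class of topological groups.

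The implication (2) $\Rightarrow$ (1) is trivial, since compact spaces are locally compact and the $\alpha_1$-property implies the $\alpha_4$-property. For (1) $\Rightarrow$ (3), I would repeat the dyadic-cube argument from the proof of the theorem: the Cantor cube $D^{\omega_1}$ is a nonmetrizable compact topological group, so under hypothesis (1) it cannot be an $\alpha_1$-space. Lemma~\ref{l16} then forces $\omega_1 \not< \flat$, i.e., $\flat \leq \omega_1$; combined with $\flat > \omega$ (noted immediately after the definition of $\flat$) this gives $\flat = \omega_1$. For (3) $\Rightarrow$ (2), given a locally compact topological group $G$ satisfying the $\alpha_4$-property, view $G$ as a rectifiable space and apply the implication (3) $\Rightarrow$ (2) of the preceding theorem to conclude that $G$ is metrizable.

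There is essentially no obstacle beyond what was already handled for the theorem: Uspenskij's dyadicity theorem for compact rectifiable spaces, the construction of a compact rectifiable subspace of countable character via Proposition~\ref{p2}, and the cardinal-invariant Lemma~\ref{l16}. The corollary is simply the specialization of the theorem to the subclass of topological groups, which is presumably why it is recorded here --- it exhibits the result of \cite{NT} as a particular case of the new theorem for rectifiable spaces.
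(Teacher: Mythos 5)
Your proposal is correct and follows exactly the route the paper intends: the corollary is stated without proof as the specialization of the preceding theorem to topological groups, which become rectifiable spaces via $\varphi(x,y)=(x,x^{-1}y)$, and your cycle $(2)\Rightarrow(1)\Rightarrow(3)\Rightarrow(2)$ (with $D^{\omega_{1}}$ and Lemma~\ref{l16} for $(1)\Rightarrow(3)$, and the theorem's $(3)\Rightarrow(2)$ for the last step) is precisely the implicit argument.
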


\begin{question}
Let $G$ be a locally compact rectifiable $\alpha_{4}$-space. Is the space $G$ an $\alpha_{1}$-space in ZFC?
\end{question}

\bigskip

\section{$\alpha_{4}$-rectifiable spaces}
In this section, we first give a new proof of the properties of
Fr$\acute{e}$chet-Urysohn and strongly Fr$\acute{e}$chet-Urysohn are
coincide in rectifiable spaces, which was proved in \cite{LFC2009}.

First, we recall a concept.

($AS$) For any family $\{a_{m, n}: (m, n)\in\mathbb{N}\times\mathbb{N}\}\subset X$ with $\lim_{n}a_{m, n}=a\in X$ for each $m\in\mathbb{N}$, it is possible to choose two strictly increasing sequences $\{i_{l}\}_{l\in \mathbb{N}}\subset \mathbb{N}$ and $\{j_{l}\}_{l\in \mathbb{N}}\subset \mathbb{N}$ such that $\lim_{l}a_{i_{l}, j_{l}}=a$. Obviously, a space with $AS$-property is an $\alpha_{4}$-space.

It is well known that a topological space $X$ is a strongly Fr$\acute{e}$chet-Urysohn space if and only if it is  Fr$\acute{e}$chet-Urysohn and has the double sequence property ($\alpha_{4}$). Therefore, it is sufficient to show that a Fr$\acute{e}$chet-Urysohn rectifiable space has the double sequence property ($\alpha_{4}$). Indeed, we have the following result.

\begin{lemma}\label{t10}
A Fr$\acute{e}$chet-Urysohn Hausdorff rectifiabe space $G$ satisfies $AS$ and hence $\alpha_{4}$ as well.
\end{lemma}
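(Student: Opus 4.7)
The plan is to verify the $AS$ property, since the paper has already noted that $AS$ implies $\alpha_{4}$. Throughout, let $\{a_{m,n}\}$ be a double sequence in $G$ with $\lim_{n} a_{m,n} = a$ for each $m$.

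First I would reduce to the case $a = e$ using the homogeneity supplied by the rectifiable structure. Setting $b_{m,n} = q(a, a_{m,n})$, continuity of $q$ together with the identity $q(a, a) = e$ gives $\lim_{n} b_{m,n} = e$ for each $m$. Conversely, the rectification identity $p(a, q(a, y)) = y$ yields $a_{m,n} = p(a, b_{m,n})$, and $p(a, e) = p(a, q(a, a)) = a$, so continuity of $p$ converts any staircase $(b_{i_{l}, j_{l}})$ converging to $e$ into a staircase $(a_{i_{l}, j_{l}})$ converging to $a$. From now on assume $a = e$.

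Next I would apply the Fr\'echet--Urysohn hypothesis to the triangular union $A = \bigcup_{m} \{a_{m,n} : n \geq m\}$. The first row witnesses $e \in \overline{A}$, and after discarding any entries equal to $e$ we may assume $e \notin A$. Fr\'echet--Urysohn produces a sequence $a_{\mu(k), \nu(k)} \in A$ with $a_{\mu(k), \nu(k)} \to e$ and $\nu(k) \geq \mu(k)$. In the favourable case $\{\mu(k)\}$ is unbounded: pass to a subsequence along which $\mu$ is strictly increasing; the constraint $\nu(k) \geq \mu(k) \to \infty$ then allows a further subsequence with $\nu$ strictly increasing, producing the required staircase.

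The main obstacle is the complementary case, when $\{\mu(k)\}$ is bounded: after pigeonholing we only know $a_{m^{*}, \nu(k)} \to e$ along a single row $m^{*}$, and we must manufacture a staircase from this row-convergence. This is where the rectifiable operations $p$ and $q$ have to do essential work beyond mere homogeneity. My plan is a second application of Fr\'echet--Urysohn to an auxiliary set built by algebraically mixing the row sequence $\{a_{m^{*}, \nu(k)}\}$ with the row-convergent sequences $\{a_{m, n}\}$ for $m > m^{*}$ via $p$ and $q$; continuity together with $a_{m^{*}, \nu(k)} \to e$ and $a_{m, n} \to e$ makes the resulting set accumulate at $e$, and the rectification identities $p(x, q(x, y)) = q(x, p(x, y)) = y$ let us transport the convergent subsequence extracted in the auxiliary set back to a genuine staircase in $(a_{m,n})$. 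The subtlest point is to arrange the algebraic combination so that the second Fr\'echet--Urysohn extraction cannot again collapse onto a single row or column; the flexibility of the $p, q$-operations is what forces both coordinates to increase and finishes the proof.
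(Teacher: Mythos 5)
Your reduction to $a=e$ and your treatment of the favourable case (row indices unbounded) are fine, but the proof has a genuine gap exactly at the step you yourself flag as "the subtlest point." When the Fr\'echet--Urysohn extraction from the triangular set collapses into a single row $m^{*}$, you have learned nothing new (each row already converges to $e$), and your plan for this case -- "a second application of Fr\'echet--Urysohn to an auxiliary set built by algebraically mixing" the rows via $p$ and $q$ -- is not an argument: no auxiliary set is specified, no recovery of a staircase in the original array is carried out, and, crucially, nothing is said about why the second extraction cannot collapse onto a single row again. Iterating the same extraction faces the same obstruction, so the whole content of the lemma is left unproved.

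For comparison, the paper avoids the collapse problem by \emph{never} applying Fr\'echet--Urysohn to the raw array. Assuming $G$ non-discrete (the discrete case is trivial), it first fixes a sequence $s_{m}\to e$ with $s_{m}\neq e$, and replaces the triangular entries by $z_{m,k}=q(s_{m},a_{m,k+m})$ (taking $z_{m,k}=s_{m}$ when this value is $e$), so that the set $M=\{z_{m,k}\}$ misses $e$ but still clusters at $e$. A single extraction $z_{m_{l},k_{l}}\to e$ then cannot degenerate: if the $k_{l}$ are bounded, the points with bounded indices form a finite subset of $G\setminus\{e\}$, so $m_{l}\to\infty$ and $a_{m_{l},k_{l}+m_{l}}=p(s_{m_{l}},z_{m_{l},k_{l}})\to p(e,e)=e$ gives the staircase; if the $k_{l}$ are unbounded and the extraction sat on one row $t$, then $a_{t,t+k_{l}}=p(s_{t},z_{t,k_{l}})\to s_{t}$ while also $a_{t,t+k_{l}}\to e$, forcing $s_{t}=e$, a contradiction. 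Some such concrete perturbation device (the fixed nontrivial null sequence $s_{m}$ entering through $q$, with $p(s_{m},\cdot)$ used to return to the original entries) is what your outline is missing; without it the bounded case is not handled. A smaller point: after "discarding entries equal to $e$" you should also note that if infinitely many rows are eventually equal to $e$ the staircase can be chosen among those entries directly, since otherwise $e\in\overline{A}$ may fail for the pruned set.
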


\begin{proof}
Assume that $G$ is a non-discrete space. Let $\{a_{m, n}: (m, n)\in\mathbb{N}\times\mathbb{N}\}\subset X$ with $\lim_{n}a_{m, n}=e$ for each $m\in\mathbb{N}$. Since $G$ is a Fr$\acute{e}$chet-Urysohn non-discrete space, there exists a sequence $\{s_{m}\}m\in \mathbb{N}\subset G$ with $\lim_{m}s_{m}=e$ such that $s_{m}\neq e$ for each $m\in \mathbb{N}$.

Put $z_{m, k}=q(s_{m}, a_{m, k+m})$ if $q(s_{m}, a_{m, k+m})\neq e$, and $z_{m, k}=s_{m}$ if $q(s_{m}, a_{m, k+m})=e$.
Let $M=\{z_{m, k}: (m, k)\in\mathbb{N}\times\mathbb{N}\}$. Obviously, we have $e\not\in M$ since $s_{m}\neq e$ for each $m\in \mathbb{N}$. However, we have $e\in\overline{M}$. Indeed, if $M\cap \{s_{m}: m\in\mathbb{N}\}$ is infinite, then it is easy to see that $e\in\overline{M}$. Therefore, suppose that $M\cap \{s_{m}: m\in\mathbb{N}\}$ is finite. Then there is an open neighborhood $U$ of $e$ such that $U\cap M\cap \{s_{m}: m\in\mathbb{N}\}=\emptyset$. Let $V$ be any open neighborhood of $e$ with $V\subset U$. Hence there is an open neighborhood $W$ of $e$ such that $q(W, W)\subset V$. It follows from $\lim_{m}s_{m}=e$ that there exists an $m\in\mathbb{N}$ such that $s_{m}\in W$. Since $\lim_{n}a_{m, n}=e$, there exists a $k\in \mathbb{N}$ such that $a_{m, k+m}\in W$. Therefore, we have $q(s_{m}, a_{m, k+m})=z_{m, k}\in q(W, W)\subset V\subset U$.

Since $e\in\overline{M}$ and $G$ is Fr$\acute{e}$chet-Urysohn, we can find a sequence $\{(m_{l}, k_{l})\}_{l\in \mathbb{N}}$ in $G$ such that $\lim_{l}z_{m_{l}, k_{l}}=e$.

{\bf Case 1:} The sequence $\{k_{l}\}_{l\in\mathbb{N}}$ is bounded.

Without loss of generality, we may assume that $k_{l}=r$ for each $l\in \mathbb{N}$ for some $r\in \mathbb{N}$.
Since $\lim_{l}z_{m_{l}, k_{l}}=\lim_{l}z_{m_{l}, r}=e$ and $z_{m_{l}, r}\neq e$ for each $l\in \mathbb{N}$, we have $\lim_{l}m_{l}=\infty$. Without loss of generality, suppose that $m_{l}<m_{l+1}$ for each $l\in \mathbb{N}$. Let $N_{1}=\{l\in\mathbb{N}: z_{m_{l}, r}=s_{m_{l}}\}$.

Subcase 1.1: The set $N_{1}$ is infinite.

We denote $N_{1}$ by $\{p_{i}: i\in\mathbb{N}\}$, where $p_{i}<p_{i+1}$ for each $i\in \mathbb{N}$. Then it is easy to see that $q(s_{m_{p_{l}}}, a_{m_{p_{l}}, r+m_{p_{l}}})=e$ for each $l\in \mathbb{N}$. Since $\lim_{l}s_{m_{p_{l}}}=e$, we have $$a_{m_{p_{l}}, r+m_{p_{l}}}=p(s_{m_{p_{l}}}, q(s_{m_{p_{l}}}, a_{m_{p_{l}}, r+m_{p_{l}}}))=p(s_{m_{p_{l}}}, e)=s_{m_{p_{l}}}\rightarrow e\ \mbox{as}\ l\rightarrow\infty.$$ Therefore, we can set $i_{l}=m_{p_{l}}$ and $j_{l}=r+m_{p_{l}}$ for each $l\in \mathbb{N}$. Then we get the strictly increasing sequences $\{i_{l}\}_{l\in \mathbb{N}}$ and $\{j_{l}\}_{l\in \mathbb{N}}$ such that $\lim_{l}a_{i_{l}, j_{l}}=e$.

Subcase 1.2: The set $N_{1}$ is finite.

Let $N_{2}=\{l\in\mathbb{N}: z_{m_{l}, r}\neq s_{m_{l}}\}$. Then $N_{2}$ is infinite. We may denote $N_{2}$ by $\{q_{i}: i\in\mathbb{N}\}$, where $q_{i}<q_{i+1}$ for each $i\in \mathbb{N}$. It follows that $$z_{m_{q_{l}}, k_{q_{l}}}=q(s_{m_{q_{l}}}, a_{m_{q_{l}}, r+m_{q_{l}}})\ \mbox{for each}\ l\in \mathbb{N}.$$ Since $\lim_{l}z_{m_{q_{l}}, k_{q_{l}}}=e$ and $\lim_{l}s_{q_{l}}=e$, we have $$a_{m_{q_{l}}, r+m_{q_{l}}}=p(s_{q_{l}}, q(s_{m_{q_{l}}}, a_{m_{q_{l}}, r+m_{q_{l}}}))=p(s_{q_{l}}, z_{m_{q_{l}}, k_{q_{l}}})\rightarrow p(e, e)=e\ \mbox{as}\ l\rightarrow\infty.$$Therefore, we can set $i_{l}=m_{q_{l}}$ and $j_{l}=r+m_{q_{l}}$ for each $l\in \mathbb{N}$. Then we get the strictly increasing sequences $\{i_{l}\}_{l\in \mathbb{N}}$ and $\{j_{l}\}_{l\in \mathbb{N}}$ such that $\lim_{l}a_{i_{l}, j_{l}}=e$.

{\bf Case 2:} The sequence $\{k_{l}\}_{l\in\mathbb{N}}$ is unbounded.

Without loss of generality, we may assume that $\{k_{l}\}_{l\in \mathbb{N}}$ is a strictly increasing sequence.

Claim: $\lim_{l}m_{l}=\infty$.

If not, we may assume that, for each $l\in \mathbb{N}$, $m_{l}=t$ for some $t\in \mathbb{N}$. Since $\{k_{l}\}_{l\in \mathbb{N}}$ is strictly increasing, we have $\lim_{l}a_{t, t+k_{l}}=e$. It follows from $\lim_{l}z_{t, k_{l}}=e$ that $$a_{t, t+k_{l}}=a_{m_{l}, m_{l}+k_{l}}=p(s_{m_{l}}, z_{m_{l}+k_{l}})=p(s_{t}, z_{t, k_{l}})\rightarrow p(s_{t}, e)=s_{t}\ \mbox{as}\ l\rightarrow\infty.$$ However, $a_{t, t+k_{l}}\rightarrow e$ as $l\rightarrow\infty$. Hence $s_{t}=e$, which is a contradiction.

It follows from Claim that there exists a strictly increasing sequence $\{n_{l}\}_{l\in \mathbb{N}}\subset \mathbb{N}$ such that $m_{n_{i}}<m_{n_{i+1}}$ for each $i\in \mathbb{N}$. Therefore, we can set $i_{l}=n_{l}$ and $j_{l}=m_{n_{l}}+k_{n_{l}}$ for each $l\in \mathbb{N}$. Then we get the strictly increasing sequences $\{i_{l}\}_{l\in \mathbb{N}}$ and $\{j_{l}\}_{l\in \mathbb{N}}$ such that $\lim_{l}a_{i_{l}, j_{l}}=e$.
\end{proof}

It follows from Lemma~\ref{t10}, we have the following theorem, which was proved in \cite{LFC2009}.

\begin{corollary}\label{t15}
A rectifiable space $G$ is Fr$\acute{e}$chet-Urysohn if and only if it is
strongly Fr$\acute{e}$chet-Urysohn.
\end{corollary}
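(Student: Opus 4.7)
The corollary follows almost immediately from Lemma \ref{t10} together with the standard characterization recalled just before that lemma, so the proof plan is short. The forward direction, strongly Fr\'echet-Urysohn $\Rightarrow$ Fr\'echet-Urysohn, is immediate from the definitions: given $x\in\overline{A}$, apply the (SFU) condition to the constant family $A_n=A$ to extract a sequence in $A$ converging to $x$.

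For the nontrivial direction, the plan is to invoke the characterization stated in the paragraph preceding Lemma \ref{t10}, namely that a topological space is strongly Fr\'echet-Urysohn iff it is Fr\'echet-Urysohn and satisfies $\alpha_{4}$. Since $G$ is assumed Fr\'echet-Urysohn and rectifiable (hence Hausdorff in the convention of this paper, where all spaces are $T_{1}$ and regular), Lemma \ref{t10} directly yields that $G$ has the $AS$-property and consequently the $\alpha_{4}$-property. Combining $\alpha_{4}$ with the Fr\'echet-Urysohn hypothesis via the cited characterization gives that $G$ is strongly Fr\'echet-Urysohn.

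There is essentially no obstacle: all the real work has been pushed into Lemma \ref{t10}, where the rectification maps $p$ and $q$ together with a carefully chosen auxiliary sequence $s_{m}\to e$ were used to diagonalize through a doubly-indexed family. The one small point worth mentioning is that Lemma \ref{t10} was formulated with convergence to the distinguished point $e$, but since rectifiable spaces are homogeneous, the $\alpha_{4}$-property at $e$ transfers to every point of $G$, so the appeal to the characterization is legitimate. Thus the corollary follows in two lines once Lemma \ref{t10} is in hand.
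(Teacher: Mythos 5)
Your proposal is correct and follows exactly the paper's route: the paper also derives the corollary from Lemma \ref{t10} combined with the standard characterization (strongly Fr\'echet-Urysohn $=$ Fr\'echet-Urysohn $+$ $\alpha_{4}$) stated just before that lemma, with homogeneity implicitly handling the localization at $e$. Nothing is missing.
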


\begin{lemma}\label{l7}
Let $G$ be an $\alpha_{4}$-rectifiable space. If $G$ is a sequential space then $G$ is strongly Fr$\acute{e}$chet-Urysohn.
\end{lemma}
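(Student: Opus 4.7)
By Corollary~\ref{t15}, it suffices to show that $G$ is Fr\'echet-Urysohn, which (since $G$ is sequential) amounts to verifying $\overline{A}=[A]^{seq}$ for every $A\subseteq G$. A routine transfinite induction on $\alpha$ reduces this to proving the single implication
\[ [A]_{2}\subseteq[A]^{seq}\qquad\text{for every }A\subseteq G, \]
the successor stage applying this implication with $[A]_{\beta}$ in place of $A$, and the limit stage being immediate as a union. Thus the whole argument comes down to pulling one-step sequential limits of sequential limits back into $A$ itself.

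Fix $A\subseteq G$ and $x\in[A]_{2}$. Since $y\mapsto q(x,y)$ is a homeomorphism of $G$ sending $x$ to $e$, we may replace $A$ by $q(x,A)$ and assume $x=e$. Then $e$ lies in the sequential closure of $[A]^{seq}$, so there is a sequence $(x_{n})$ in $[A]^{seq}\setminus\{e\}$ with $x_{n}\to e$; for each $n$, choose $(a_{n,k})_{k\in\mathbb{N}}\subseteq A$ with $a_{n,k}\to x_{n}$ as $k\to\infty$. The decisive move is a ``$q$-then-$p$'' trick: put $b_{n,k}=q(x_{n},a_{n,k})$. Continuity of $q$ and $q(x_{n},x_{n})=e$ give $b_{n,k}\to e$ as $k\to\infty$ for every fixed $n$, so the countable family of sequences $S_{n}=\{b_{n,k}:k\in\mathbb{N}\}$ all converge to the \emph{common} point $e$. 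The $\alpha_{4}$-property then delivers a sequence $(c_{l})$ converging to $e$ with $\{c_{l}\}\cap S_{n}\neq\emptyset$ for infinitely many $n$.

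Choose a strictly increasing sequence $(n_{l})$ and indices $(k_{l})$ with $b_{n_{l},k_{l}}\in\{c_{l'}:l'\in\mathbb{N}\}$; being an infinite subfamily of the convergent sequence $(c_{l})$, the $b_{n_{l},k_{l}}$'s still converge to $e$. Inverting via $p(x,q(x,y))=y$ yields $a_{n_{l},k_{l}}=p(x_{n_{l}},b_{n_{l},k_{l}})$, and joint continuity of $p$ together with $x_{n_{l}}\to e$, $b_{n_{l},k_{l}}\to e$, and $p(e,e)=e$ forces $a_{n_{l},k_{l}}\to e$. Since $\{a_{n_{l},k_{l}}\}\subseteq A$, this gives $e\in[A]^{seq}$, as required. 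The only mild technicality is extracting a subsequence of $(c_{l})$ that meets distinct $S_{n}$'s, but this is automatic; the real content of the proof is the observation that $q(x_{n},\cdot)$ converts each sequence converging to $x_{n}$ into a sequence converging to $e$, so that the $\alpha_{4}$-property, which only speaks about sequences with a common limit, can be brought to bear, and that the resulting ``diagonal'' sequence can then be transported back to $A$ by continuity of $p$.
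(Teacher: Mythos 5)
Your argument is correct and is essentially the paper's own proof: after invoking Corollary~\ref{t15}, both reduce (using sequentiality) the problem to showing the one-step sequential closure is idempotent, and then use the identical ``$q$-then-$p$'' trick --- apply $\alpha_{4}$ to the sequences $q(x_{n},a_{n,k})\rightarrow e$ and transport the resulting diagonal back into $A$ via $a_{n_{l},k_{l}}=p(x_{n_{l}},q(x_{n_{l}},a_{n_{l},k_{l}}))\rightarrow p(e,e)=e$; the paper merely phrases the reduction as a contradiction with $\hat{\hat{A}}\setminus\hat{A}\neq\emptyset$ instead of your transfinite induction. The one point you call ``automatic'' (the $\alpha_{4}$ witness might meet infinitely many $S_{n}$ only in a single repeated point, so extracting a convergent diagonal really needs a small preliminary step, e.g.\ first pruning the $S_{n}$ to be pairwise disjoint) is glossed over in exactly the same way in the paper's proof, so it does not separate your argument from theirs.
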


\begin{proof}
It follows from Corollary~\ref{t15} that it suffices to show that $G$ is Fr$\acute{e}$chet-Urysohn. Suppose that $G$ is non-Fr$\acute{e}$chet-Urysohn. Then there exists a subset $A$ of $G$ such that $\hat{\hat{A}}\setminus\hat{A}\neq\emptyset$, where the set $\hat{A}$ is all the limit points of convergent sequences in $A.$ Take a point $x\in\hat{\hat{A}}\setminus\hat{A}$. Without loss of generality, we may assume the $x=e.$

Since $e\in\hat{\hat{A}}$, there exists a sequence $\{x_{n}\}_{n=1}^{\infty}\subset \hat{A}$ such that the sequence
$\{x_{n}\}_{n=1}^{\infty}$ converges to $e$. For each $n\in \mathbb{N}$, there exists a sequence $\{x_{nj}\}_{j=1}^{\infty}\subset A$ such that the sequence
$\{x_{nj}\}_{j=1}^{\infty}$ converges to $x_{n}$. Since $G$ is a rectifiable space, the sequence $\{q(x_{n}, x_{nj})\}_{j=1}^{\infty}$ converges to $q(x_{n}, x_{n})=e$ as $j\rightarrow\infty$. Moreover, since $G$ is an $\alpha_{4}$-rectifiable space, there are an increasing sequence $\{n_{k}\}_{k=1}^{\infty}$ and a sequence $\{j(n_{k})\}_{k=1}^{\infty}$ such that $\{q(x_{n_{k}}, x_{n_{k}j(n_{k})})\}_{k=1}^{\infty}$ converges to $e$. Then we have $$x_{n_{k}j(n_{k})}=p(x_{n_{k}}, q(x_{n_{k}}, x_{n_{k}j(n_{k})}))\rightarrow p(e, e)=e\ \mbox{as}\ k\rightarrow\infty.$$ However, we have $e\not\in\hat{A}$, which is a contradiction.
\end{proof}

It follows from Lemmas~\ref{t10},~\ref{l7} and Corollary~\ref{t15} that we have the following theorem.

\begin{theorem}\label{t12}
Let $G$ be a sequential rectifiable space. Then the following conditions are equivalent:
\begin{enumerate}
\item The space $G$ is an $\alpha_{4}$-space;

\item The space $G$ is an $AS$-space;

\item The space $G$ is Fr$\acute{e}$chet-Urysohn;

\item The space $G$ is strongly Fr$\acute{e}$chet-Urysohn.
\end{enumerate}
\end{theorem}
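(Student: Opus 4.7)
The plan is to close a cycle of implications $(1)\Rightarrow(4)\Rightarrow(3)\Rightarrow(2)\Rightarrow(1)$, assembling the lemmas already proved in this section. Almost none of the work is new; the theorem is really a packaging of Lemma~\ref{t10}, Lemma~\ref{l7}, Corollary~\ref{t15} and the trivial remark preceding Lemma~\ref{t10} that $AS\Rightarrow\alpha_{4}$.

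First I would handle $(1)\Rightarrow(4)$: since $G$ is sequential and has the $\alpha_{4}$-property, Lemma~\ref{l7} applies directly and gives that $G$ is strongly Fr\'echet-Urysohn. Next, $(4)\Rightarrow(3)$ is immediate because every strongly Fr\'echet-Urysohn space is Fr\'echet-Urysohn by definition (or alternatively via Corollary~\ref{t15}, which gives the reverse implication in the rectifiable setting and makes (3) and (4) equivalent outright).

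Then I would do $(3)\Rightarrow(2)$: a Fr\'echet-Urysohn rectifiable space satisfies the $AS$-property by Lemma~\ref{t10} (note that $G$ is Hausdorff since all spaces in the paper are $T_{1}$ and regular, so the hypothesis of Lemma~\ref{t10} is met). Finally, $(2)\Rightarrow(1)$ is the observation already recorded just after the definition of $AS$, namely that any space with the $AS$-property is an $\alpha_{4}$-space; this follows by specialising the defining choice of strictly increasing sequences $\{i_{l}\}$, $\{j_{l}\}$ in $AS$ to a countable family of sequences converging to the same point.

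There is no real obstacle here; the only thing to be careful about is to check that the hypotheses of each cited lemma are actually available. In particular Lemma~\ref{l7} needs \emph{sequential} (supplied by hypothesis) and $\alpha_{4}$ (condition (1)); Lemma~\ref{t10} needs Fr\'echet-Urysohn and Hausdorff (supplied by the standing $T_{1}$-regular assumption); Corollary~\ref{t15} needs only that $G$ be rectifiable. The cycle therefore closes with no extra hypothesis beyond those in the statement, yielding the equivalence of all four conditions.
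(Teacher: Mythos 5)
Your proposal is correct and follows essentially the same route as the paper, which simply assembles Lemma~\ref{t10}, Lemma~\ref{l7}, Corollary~\ref{t15} and the remark that $AS\Rightarrow\alpha_{4}$; your explicit cycle $(1)\Rightarrow(4)\Rightarrow(3)\Rightarrow(2)\Rightarrow(1)$ is just a careful write-up of that assembly.
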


\begin{corollary}\label{t3}\cite{G1996}
If $G$ is a weakly first-countable rectifiable space, then $G$ is first-countable and hence it is metrizable.
\end{corollary}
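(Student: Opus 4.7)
\emph{Plan.} My plan is to deduce that $G$ is first-countable, and then invoke the classical metrizability theorem for first-countable rectifiable spaces from~\cite{G1996}. The route from weakly first-countable to first-countable has three steps: (i) $G$ is sequential, and each weak base element $P\in\mathscr P_x$ is a sequential neighborhood of $x$; (ii) $G$ is an $\alpha_4$-space, hence Fr\'echet--Urysohn by Theorem~\ref{t12}; (iii) in a Fr\'echet--Urysohn space every sequential neighborhood of $x$ contains $x$ in its interior, so the weak base descends to a countable local base at each point.

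For (i), fix a sequentially closed $A\subset G$ and $x\notin A$. Using axioms (a) and (b) of the weak base I refine $\mathscr P_x$ to a decreasing chain $\{Q_n\}$ in $\mathscr P_x$. If every $Q_n$ met $A$, then choosing $y_n\in Q_n\cap A$ and applying the network property would yield $y_n\to x$, contradicting the sequential closedness of $A$. So some $Q_n\subset G\setminus A$, and the weak base axiom makes $G\setminus A$ open at $x$; running this for every $x\notin A$ shows $A$ is closed, so $G$ is sequential. To see that a fixed $P\in\mathscr P_x$ is a sequential neighborhood of $x$, suppose for contradiction there is $y_k\to x$ with $y_k\notin P$ for all $k$, and let $F=\{y_k:k\in\mathbb N\}$. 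A short case analysis shows that every convergent sequence in $F\cup\{x\}$ has its limit in $F\cup\{x\}$, so $F\cup\{x\}$ is sequentially closed, hence closed by the previous step; thus $\overline F=F\cup\{x\}$. Then for any $z\in G\setminus F$ with $z\neq x$, the point $z$ lies in the open set $G\setminus\overline F$, so the weak base axiom produces $Q\in\mathscr P_z$ with $Q\cap F=\emptyset$; at $z=x$ the element $P$ itself is disjoint from $F$. Hence $G\setminus F$ is open, contradicting $x\in\overline F$.

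For (ii), given a countable family $\{S_m\}_{m\in\mathbb N}$ of sequences converging to $e$ and a decreasing weak base $\{P_m\}$ at $e$, part (i) makes each $P_m$ a sequential neighborhood of $e$, so I can pick $s^m_{k_m}\in S_m\cap P_m$ for every $m$. The diagonal sequence $\{s^m_{k_m}\}$ converges to $e$ by the network property of $\{P_m\}$ and meets every $S_m$, so $G$ is an $\alpha_4$-space. Since $G$ is also sequential, Theorem~\ref{t12} upgrades this to the Fr\'echet--Urysohn property. For (iii), in a Fr\'echet--Urysohn space any sequential neighborhood of $x$ must contain $x$ in its interior --- otherwise a sequence in the complement would converge to $x$ and contradict the sequential-neighborhood condition. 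Thus $\{\mathrm{int}(P_m)\}$ is a decreasing countable local base at $e$, and by homogeneity of rectifiable spaces $G$ is first-countable at every point.

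The main obstacle is the sequential-neighborhood lemma in~(i); once it is in hand, the rest reduces to a chain of routine verifications through Theorem~\ref{t12} and the standard Fr\'echet--Urysohn upgrade, terminating with the cited metrizability result~\cite{G1996}.
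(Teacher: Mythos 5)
Your proposal is correct and takes essentially the same route as the paper: weakly first-countable gives a sequential $\alpha_{4}$-space, Lemma~\ref{l7} (equivalently Theorem~\ref{t12}) upgrades this to Fr\'echet--Urysohn, and then a Fr\'echet--Urysohn weakly first-countable space is first-countable, whence metrizability follows from \cite{G1996}. The only difference is that you verify in detail the facts the paper cites as well known (sequentiality, the diagonal $\alpha_{4}$ argument, and the passage from sequential neighborhoods to genuine neighborhoods), which is harmless.
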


\begin{proof}
It is well known that a weakly first-countable space is a sequential $\alpha_4$-space, by Lemma~\ref{l7}, $G$ is a Fr\'echet-Urysohn space, then $G$ is first-countable since a Fr\'echet-Urysohn weakly first-countable space is first-countable. Hence $G$ is metrizable.
\end{proof}
\bigskip

\section{Metrizabilities of rectifiable spaces}
In \cite{LFC2009}, F.C. Lin and R.X. Shen posed the following question:

\begin{question}\cite{LFC2009}\label{q0}
Is every sequential rectifiable space with a point-countable
$k$-network\footnote{ Let $\mathscr{P}$ be a family of subsets of
a space $X$.
The family $\mathscr{P}$ is called a {\it $k$-network} \cite{PO} if
whenever $K$ is a compact subset of $X$ and $K\subset U\in \tau
(X)$, there is a finite subfamily $\mathscr{P}^{\prime}\subset
\mathscr{P}$ such that $K\subset \cup\mathscr{P}^{\prime}\subset U$.} a paracompact space?
\end{question}

In this section, we shall give a partial answer for Question~\ref{q0}. Moreover, we also discuss the metrizability of rectifiable spaces.

Let $X$ be a space and $x\in X.$ The space $X$ has property $P(x, U)$ \cite{SA}
if $U\subset X$, $\{x_{i}: i\in \mathbb{N}\}\subset U$,
$x_{i}\rightarrow x$ as $i\rightarrow\infty$ and $x_{i}\neq x_{j}$
if $i\neq j$, then there is $\Gamma =\{x(n, k): n, k\in
\mathbb{N}\}\subset U$ such that $x(n, k)\rightarrow x_{n}$ as
$k\rightarrow\infty$, $t: \mathbb{N}^{2}\rightarrow \Gamma$ is a
bijection, where $t(n, k)=x(n, k)$ and $\Gamma\cup\{x_{i}: i\in
\mathbb{N}\}\cup\{x\}$ is a closed subset of $X$ homeomorphic to
$S_{2}$.

\begin{lemma}\cite{SA}\label{l0}
A sequential non-Fr$\acute{e}$chet-Urysohn space with a
point-countable $k$-network contains a closed copy of $S_{2}$.
\end{lemma}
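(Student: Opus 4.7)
The plan is to extract a ``double sequence'' skeleton from the failure of Fr$\acute{e}$chet-Urysohn, and then refine it using the point-countable $k$-network so that it closes up to a copy of the Arens space $S_{2}$.

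Since $X$ is sequential and not Fr$\acute{e}$chet-Urysohn, there is $A\subset X$ whose sequential closure $\hat{A}$ is not itself closed. Pick $x\in\hat{\hat{A}}\setminus\hat{A}$: this produces a sequence $x_{n}\to x$ with $x_{n}\in\hat{A}$, and for each $n$ a sequence $\{x_{n,k}\}_{k}\subset A$ with $x_{n,k}\to x_{n}$ as $k\to\infty$, while $x$ itself is the limit of no sequence from $A$. After standard tail-and-subsequence surgery, I may assume that $x$, the $x_{n}$, and the $x_{n,k}$ are pairwise distinct.

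Next I use the point-countable $k$-network $\mathcal{P}$ to refine the columns $\{x_{n,k}\}_{k}$ so that the candidate set $S=\{x\}\cup\{x_{n}:n\in\mathbb{N}\}\cup\{x_{n,k}:n,k\in\mathbb{N}\}$ becomes a closed copy of $S_{2}$. Let $\mathcal{P}_{x}:=\{P\in\mathcal{P}:x\in P\}=\{P_{j}:j\in\mathbb{N}\}$, which is countable by point-countability. The key observation is that no ``diagonal'' $\{x_{n_{i},k_{i}}\}$ with strictly increasing $\{n_{i}\}$ can converge to $x$: such a diagonal would be a sequence in $A$ with limit $x$, contradicting $x\notin\hat{A}$. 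Combining this with the $k$-network property applied to the compact set $K=\{x\}\cup\{x_{n}:n\in\mathbb{N}\}$, I thin each column so that, for every $j\in\mathbb{N}$, only finitely many columns have a tail contained in $P_{j}$. A standard argument then shows that no diagonal $\{x_{n_{i},k_{i}}\}$ with $n_{i}\to\infty$ can converge to any point of $X$ at all.

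With that refinement in place, each $x_{n,k}$ is isolated in $S$, basic $S$-neighborhoods of $x_{n}$ are $\{x_{n}\}\cup\{x_{n,k}:k\geq m\}$, and basic $S$-neighborhoods of $x$ are $\{x\}\cup\bigcup_{n\geq m}V_{n}$ with $V_{n}$ a neighborhood of $x_{n}$ in $S$; hence $S$ is homeomorphic to the Arens space $S_{2}$. Closedness of $S$ in $X$ reduces, by sequentiality, to sequential closedness: any convergent sequence in $S$ either has bounded $n$-range (limit in $S$) or is a diagonal with $n_{i}\to\infty$, which has been excluded. The principal obstacle is the thinning step: the precise combinatorial argument that simultaneously kills all stray diagonal limits while preserving the row convergences $x_{n,k}\to x_{n}$ is the real content of the point-countable $k$-network hypothesis; once the thinning is achieved, reading off the Arens topology on $S$ is routine.
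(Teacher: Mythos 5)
You should note at the outset that the paper does not prove this lemma at all: it is quoted verbatim from Shibakov's paper \cite{SA}, so the only possible comparison is with that argument. Your skeleton is the natural one and surely matches the known line of proof: extract $x\in\hat{\hat{A}}\setminus\hat{A}$, a sequence $x_{n}\to x$ and columns $x_{n,k}\to x_{n}$ inside $A$, make all points distinct, and then observe that the candidate set $S$ is a closed copy of $S_{2}$ as soon as no ``diagonal'' sequence meeting infinitely many columns converges in $X$ (closedness then follows from sequentiality, and the identification of the subspace topology with the Arens topology is an iterated sequential-closure computation). Those reductions are fine, if left somewhat implicit.

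The genuine gap is that the single step in which the point-countable $k$-network is actually used --- producing the thinning for which no diagonal converges to \emph{any} point of $X$ --- is exactly the step you do not carry out, and the mechanism you sketch is aimed at the wrong target. Working only with $\mathcal{P}_{x}=\{P\in\mathcal{P}:x\in P\}$ and applying the $k$-network property to the compact set $K=\{x\}\cup\{x_{n}:n\in\mathbb{N}\}$ gives no control over a diagonal converging to an arbitrary point $y\in X$, possibly outside $S$; note that diagonals converging to $x$ itself are already impossible with no thinning whatsoever, since they lie in $A$ and $x\notin\hat{A}$, so your criterion attacks the harmless case. The hypothesis has to be exploited over the whole countable candidate set $D=\{x\}\cup\{x_{n}\}\cup\{x_{n,k}\}$: if a diagonal $(d_{i})$ of selected points converged to some $y$, then $\{y\}\cup\{d_{i}:i\in\mathbb{N}\}$ is compact, so some single member $P\in\mathcal{P}$ contains infinitely many $d_{i}$; such a $P$ meets $D$ and hence belongs to the countable family $\{P\in\mathcal{P}:P\cap D\neq\emptyset\}$, and the selection must be constructed inductively against this family. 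Moreover the condition you propose (``for each $j$ only finitely many columns have a tail contained in $P_{j}$'') is too weak --- what is needed is that each relevant $P$ contain selected points from only finitely many columns --- and it cannot be achieved by naive avoidance, because $k$-network members typically do contain whole tails of the columns; handling this dichotomy is precisely the combinatorial content of Shibakov's construction (the $P(x,U)$/$Q(\alpha,x,U)$ machinery quoted later in this paper). Declaring that step ``a standard argument'' and ``the real content'' leaves the proof incomplete at its only nontrivial point.
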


\begin{lemma}\cite{LFC2009}\label{l1}
Let $G$ be a rectifiable space. Then $G$ contains a (closed) copy of
$S_{\omega}$ if and only if $G$ has a (closed) copy of $S_{2}$.
\end{lemma}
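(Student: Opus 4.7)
The plan is to exploit the rectification maps $p$ and $q$ together with the homogeneity of $G$ to convert between closed copies of $S_\omega$ and $S_2$. Neither space embeds algebraically into the other, but both directions become natural once we apply the homeomorphisms $p(s,\cdot)$ and $q(x,\cdot)$ to shift the limit points of the constituent sequences.

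For the direction $(\Leftarrow)$: suppose $A=\{\infty\}\cup\{x_n\}\cup\{x_n(m)\}$ is a (closed) copy of $S_2$ in $G$. Using the homogeneity of $G$ I may assume $\infty=e$. For each $n$, the map $g_{x_n}=q(x_n,\cdot)$ is a self-homeomorphism of $G$ sending $x_n$ to $e$, so the sequence $t_{n,m}=q(x_n,x_n(m))$ converges to $e$ as $m\to\infty$ for every fixed $n$. I will then verify that $M=\{e\}\cup\{t_{n,m}:n,m\in\mathbb{N}\}$, after discarding finitely many terms from each row to ensure the points are distinct and different from $e$, realizes a copy of $S_\omega$.

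For the direction $(\Rightarrow)$: suppose $B=\{y_0\}\cup\{y_{n,m}\}$ is a (closed) copy of $S_\omega$ in $G$ with $y_0=e$. I pick an auxiliary sequence $\{s_n\}\to e$ with $s_n\neq e$, which can be extracted from $B$ itself (for instance, by choosing one point from each row far enough out). Set $z_n=s_n$ and $z_{n,m}=p(s_n,y_{n,m})$; continuity of $p$ gives $z_{n,m}\to p(s_n,e)=s_n$ as $m\to\infty$, and $s_n\to e$. The crucial step is to check that no diagonal $\{z_{n,f(n)}\}$ converges to $e$: if one did, then $y_{n,f(n)}=q(s_n,z_{n,f(n)})\to q(e,e)=e$ in $G$, and hence in $B$, contradicting the standard fact that in the direct-limit topology of $S_\omega$ no diagonal selection from the defining sequences converges to the apex (given any $f$, the open neighborhood $\{y_0\}\cup\bigcup_n\{y_{n,m}:m>f(n)\}$ of $y_0$ misses every $y_{n,f(n)}$).

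The main obstacle in both directions is verifying that the subspace topology inherited from $G$ on the candidate set truly agrees with the intrinsic $S_\omega$ (respectively $S_2$) topology, and, for the closed-copy version, that the constructed set is closed in $G$. I expect to handle this by pulling back neighborhoods of $e$ in the original closed copy through the self-homeomorphisms $q(x_n,\cdot)$ or $p(s_n,\cdot)$: since each of these maps is a homeomorphism of $G$ onto itself preserving $e$ as the designated limit, any neighborhood base at $\infty$ in $A$ (or at $y_0$ in $B$) transfers to a matching base at $e$ in the candidate subspace, and closedness survives the finite combinatorics involved in the necessary thinning. The $\alpha_4$-style diagonal obstruction noted above is what ultimately forces the constructed subspace in the $(\Rightarrow)$ direction to have the non-Fr\'echet--Urysohn $S_2$ topology, rather than merely an $S_\omega$ topology.
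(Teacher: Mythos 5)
Your constructions are the right ones (indeed the device $z_{n,m}=p(s_n,y_{n,m})$ is exactly what the paper itself uses in the proof of Lemma~\ref{l3}; for Lemma~\ref{l1} the paper gives no proof, citing \cite{LFC2009}), but the plan for the decisive step --- verifying that the candidate set carries the $S_\omega$ (resp.\ $S_2$) topology and is closed --- does not work as you describe it. You propose to ``pull back neighborhood bases at $\infty$ (resp.\ $y_0$) through the self-homeomorphisms $q(x_n,\cdot)$, $p(s_n,\cdot)$''; but these are \emph{different} homeomorphisms for different $n$, and none of them fixes $e$, so there is no single map transferring the neighborhood filter of the old limit point to the new one, while the fan/Arens topology at $e$ is a condition on all rows simultaneously. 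What is actually needed is a joint-continuity argument played against the closedness of the given copy. For instance, in the direction $S_2\Rightarrow S_\omega$ the whole content is that $e\notin\overline{\{t_{n,m}: m\le f(n)\}}$ for every $f:\mathbb{N}\to\mathbb{N}$ (otherwise the subspace is only a coarser fan, not $S_\omega$): one argues that a net $t_{n_\alpha,m_\alpha}\to e$ from this set may be assumed to have $n_\alpha\to\infty$, whence $x_{n_\alpha}(m_\alpha)=p(x_{n_\alpha},t_{n_\alpha,m_\alpha})\to p(e,e)=e$, contradicting that $\{x_n(m): m\le f(n)\}$ is closed in $G$ (it is closed in the closed copy $A$ and misses $\infty$). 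You never address this condition in that direction at all.

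In the direction $S_\omega\Rightarrow S_2$ you do isolate the diagonal obstruction, but you verify it only for convergent sequences. Since $G$ is an arbitrary rectifiable space --- not assumed sequential --- showing that no sequence $\{z_{n,f(n)}\}$ converges to $e$ does not yield $e\notin\overline{\{z_{n,m}: m\le f(n)\}}$, which is what the $S_2$ subspace topology at $e$ requires; likewise closedness of the constructed set, and the fact that each $s_n$ has a relative neighborhood meeting only its own row, must be checked against arbitrary nets. The repair is the same kind of argument as above (if $z_{n_\alpha,m_\alpha}\to e$ with $n_\alpha\to\infty$, then $y_{n_\alpha,m_\alpha}=q(s_{n_\alpha},z_{n_\alpha,m_\alpha})\to q(e,e)=e$, contradicting that $\{y_{n,m}: m\le f(n)\}$ is closed in $G$), and it uses the closedness of the original copy $B$ in an essential way --- your outline never indicates where that hypothesis enters, and the non-closed variant of the statement needs separate treatment. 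Finally, ``discarding finitely many terms from each row'' cannot produce distinctness \emph{across} rows: two rows $\{q(x_n,x_n(m))\}_m$ may share infinitely many points, so you need either a genuine selection argument or an argument excluding degenerate collapses (e.g.\ if all $t_{n,m}$ lay in finitely many convergent sequences, then $A=p(\{x_n\}\cup\{e\},\cdot)$ of a compact set would force the closed copy of $S_2$ into a compact set, which is impossible); as written this is waved away.
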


\begin{lemma}\label{l3}
Let $G$ be a non-Fr$\acute{e}$chet-Urysohn sequential rectifiable
space with point-countable $k$-network. Then for any $x\in G$ and
any open $U\subset G$, $G$ has the property $P(x, U)$.
\end{lemma}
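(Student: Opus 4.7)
The plan is to combine Lemmas~\ref{l0} and~\ref{l1} with the homogeneity of rectifiable spaces to produce a closed copy of $S_\omega$ whose apex is $x$, then use the rectification to ``shift'' the $n$-th column so that it converges to the prescribed $x_n$, and finally to prune using the point-countable $k$-network so that the resulting double-indexed family $\Gamma$ lies in $U$ and $\Gamma \cup \{x_i\} \cup \{x\}$ is a closed copy of $S_2$ in $G$.

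To begin, assume (as in the definition of property $P(x, U)$) that $\{x_i : i \in \mathbb{N}\} \subset U$ converges to $x$ with all $x_i$ distinct. Since $G$ is sequential, non-Fr$\acute{e}$chet-Urysohn, and has a point-countable $k$-network, Lemma~\ref{l0} supplies a closed copy of $S_2$ in $G$, and Lemma~\ref{l1} then upgrades this to a closed copy of $S_\omega$. Because rectifiable spaces are homogeneous, a self-homeomorphism of $G$ carries the apex of this $S_\omega$-copy to $x$, and we may therefore write the transported copy as $\{x\} \cup \{a(n, k) : n, k \in \mathbb{N}\}$ with $a(n, k) \to x$ as $k \to \infty$ for each $n$. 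I would then set
$$b(n, k) = p(x_n, q(x, a(n, k))).$$
Continuity of $p$ and $q$, together with $q(x, x) = e$ and $p(x_n, e) = x_n$, yield $b(n, k) \to x_n$ as $k \to \infty$ for each $n$. Because $x_n \in U$ and $U$ is open, discarding finitely many initial terms from each column places all $b(n, k)$ inside $U$; Hausdorffness then permits a further refinement so that the $b(n, k)$ are pairwise distinct and disjoint from $\{x\} \cup \{x_i\}$, making the natural map $t : (n, k) \mapsto b(n, k)$ a bijection onto $\Gamma := \{b(n, k) : n, k \in \mathbb{N}\}$.

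The main obstacle is the last step: guaranteeing that $\Gamma \cup \{x_i\} \cup \{x\}$ is \emph{closed} in $G$ and carries exactly the Arens topology of $S_2$. A priori a ``diagonal'' subsequence $\{b(n_i, k_i)\}$ with $n_i \to \infty$ could accumulate at some stray point $y \notin \{x\} \cup \{x_n\}$, which would ruin both closedness and the $S_2$-homeomorphism type. To exclude such accumulations I would fix a point-countable $k$-network $\mathscr{P}$ of $G$ and diagonally thin each column: inductively in $n$, select a subsequence of $\{b(n, k) : k\}$ so that no single $P \in \mathscr{P}$ contains $b(n_i, k_i)$ for infinitely many $(n_i, k_i)$ with the $n_i$'s strictly increasing. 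Since every compact set (in particular, the closure of any convergent sequence in $G$) is covered by finitely many members of $\mathscr{P}$ and each point of $\Gamma$ lies in only countably many of them, such a refinement exists; it forces every convergent subsequence of $\Gamma$ to eventually stabilize in a single column and hence to converge to some $x_n$, or else to diagonalize through the $x_n$'s and converge to $x$. Closedness of $\Gamma \cup \{x_i\} \cup \{x\}$ and the required $S_2$-topology then follow from the sequentiality of $G$ together with the built-in convergences $b(n, k) \to x_n$ and $x_n \to x$.
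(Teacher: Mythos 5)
Your setup coincides with the paper's: use Lemmas~\ref{l0} and~\ref{l1} to get a closed copy of $S_{\omega}$, translate its $n$-th column by the rectification so that it converges to $x_{n}$, and drop finitely many terms per column to get inside $U$ and to make the points distinct. The gap is exactly at the step you yourself identify as the main obstacle. The proposed pruning --- choosing columnwise subsequences so that no single member $P$ of a point-countable $k$-network $\mathscr{P}$ contains infinitely many points $b(n_{i}, k_{i})$ with strictly increasing $n_{i}$ --- cannot be carried out in general: point-countability bounds how many members of $\mathscr{P}$ contain a given point, not how many points a member may contain, and nothing prevents some $P\in\mathscr{P}$ (for instance $P=G$, which can be adjoined to any $k$-network without destroying point-countability) from containing a tail of every column; then every columnwise thinning still leaves an infinite ``diagonal'' inside $P$. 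Moreover, no pruning argument that uses only sequentiality and a point-countable $k$-network can succeed, since in a metrizable space one has the same data $b(n,k)\rightarrow x_{n}\rightarrow x$ and yet some diagonal sequence always converges to $x$; the rectifiable structure and the closedness of the $S_{\omega}$-copy must enter. Your closing sentence even leaves open the possibility of a diagonal converging to $x$, which would already ruin the homeomorphism with $S_{2}$, since in Arens' space no sequence meeting infinitely many columns converges.

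The paper closes this gap with the rectification itself, and your construction is already positioned for the same punchline. Suppose a sequence $\{b(n_{i}, k_{i})\}$ with pairwise distinct $n_{i}$ converges to some $y\in G$. From $b(n,k)=p(x_{n}, q(x, a(n,k)))$ and the identities of Theorem~\ref{t9} one gets $q(x_{n}, b(n,k))=q(x, a(n,k))$, so joint continuity of $q$ together with $x_{n_{i}}\rightarrow x$ gives $q(x, a(n_{i}, k_{i}))\rightarrow q(x, y)$, and hence $a(n_{i}, k_{i})=p(x, q(x, a(n_{i}, k_{i})))\rightarrow p(x, q(x, y))=y$. But a subset of the closed copy of $S_{\omega}$ that misses the apex and meets each column in at most one point is closed and discrete in that copy, hence in $G$, so it cannot contain a nontrivial convergent sequence of distinct points --- a contradiction. (The paper runs this with the copy anchored at $e$ and $x(i,k)=p(x_{i}, y(i,k))$, obtaining $y(i_{j}, k_{j})=q(x_{i_{j}}, x(i_{j}, k_{j}))\rightarrow q(x, b)$.) With this argument in place of the $k$-network pruning, the remainder of your outline (columns plus their limits are compact, and sequentiality reduces both closedness and the $S_{2}$-topology to excluding convergent diagonals) goes through.
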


\begin{proof}
By Lemma~\ref{l0}, there exists a closed subset of $G$ homeomorphic
to $S_{2}$. It follows from Lemma~\ref{l1}, $G$ contains a closed
subset homeomorphic to $S_{\omega}$. Let $S_{\omega}=\{y(n, k): n,
k\in \mathbb{N}\}\cup\{e\}$, where $y(n, k)\rightarrow e$ as
$k\rightarrow\infty$ and $y(n, k)\neq y(l, m)$ if $(n, k)\neq (l,
m)$. We may assume that $S_{\omega}$ is a closed subset of $G$. Let
$U$ be open in $G$, $\{x_{i}: i\in \mathbb{N}\}\subset U$,
$x_{i}\rightarrow x$ as $i\rightarrow\infty$ and $x_{i}\neq x_{j}$
if $i\neq j$. For any $i, k\in \mathbb{N}$, put $x(i, k)=p(x_{i},
y(i, k))$. Since $y(i, k)\rightarrow e$ as $k\rightarrow\infty$, it follows that
$x(i, k)\rightarrow p(x_{i}, e)=x_{i}$ as $k\rightarrow\infty$. For
every $i\in \mathbb{N}$, we can choose a $k_{i}\in \mathbb{N}$ such
that $\{x(i, k): i\in \mathbb{N}, k\geq k_{i}\}\subset U$ and $x(i,
k)\neq x(i^{\prime}, k^{\prime})$ if $(i, k)\neq (i^{\prime},
k^{\prime})$ and $k\geq k_{i}, k^{\prime}\geq k_{i^{\prime}}$. Then
$\{x_{i}: i\in \mathbb{N}\}\cup\{x(i, k): i\in \mathbb{N}, k\geq
k_{i}\}\cup\{x\}$ is a closed subset in $G$ and homeomorphic to
$S_{2}$. If not, there exists a sequence $\{x(i_{j}, k_{j})\}_{j=1}^{\infty}$
converging to some point $b\in G$ such that $i_{j}\neq
i_{j^{\prime}}$ if $j\neq j^{\prime}$. Therefore, we have $$y(i_{j},
k_{j})=q(x_{i_{j}}, p(x_{i_{j}},
y(i_{j}, k_{j})))=q(x_{i_{j}}, x(i_{j}, k_{j}))\rightarrow q(x, b)\ \mbox{as}\
j\rightarrow\infty.$$ However, the set $\{y(i_{j},
k_{j}): j\in \mathbb{N}\}$ is closed and discrete in $G$, which is a contradiction.
\end{proof}

\begin{lemma}\cite{SA}\label{l2}
Let $X$ be a sequential space with a point-countable
$k$-network such that for any $x\in X$ and $U\subset X$ the property
$P(x, U)$ holds. Then for any $\alpha <\omega_{1}, x\in X, U\subset
X$ open in $X$ the following property holds:\\
$Q(\alpha , x, U)$: If $\{x_{i}: i\in \mathbb{N}\}\subset U$,
$x_{i}\rightarrow x$ as $i\rightarrow\infty$ then there is $Q\subset
U$ such that $\overline{Q}$ is countable,
$\overline{Q}\setminus\{x\}=U$, $x\in [Q]_{\alpha}$, $x\not\in
[Q]_{\beta}$ for each $\beta <\alpha$.
\end{lemma}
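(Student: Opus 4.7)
\textbf{Proposal for proof of Lemma~\ref{l2}.} The plan is to proceed by transfinite induction on $\alpha<\omega_{1}$, using property $P(x,U)$ as the engine that converts a single convergent sequence into a two-dimensional "$S_{2}$-fan" that can be refined one sequential level at a time. The desired set $Q$ will be built as a countable union of sets obtained from the inductive hypothesis applied along the columns of such a fan.

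For the base case $\alpha=1$, simply take $Q=\{x_{i}:i\in\mathbb{N}\}$ after discarding any $x_{i}$ equal to $x$; then $\overline{Q}=Q\cup\{x\}$ is countable, $\overline{Q}\setminus\{x\}\subset U$, $x\in[Q]_{1}$, and $x\notin[Q]_{0}$. For a successor ordinal $\alpha+1$, given $\{x_{i}\}\subset U$ with $x_{i}\to x$, I would invoke $P(x,U)$ to extract $\Gamma=\{x(n,k):n,k\in\mathbb{N}\}\subset U$ with $x(n,k)\to x_{n}$ as $k\to\infty$, together with the fact that $\Gamma\cup\{x_{i}\}\cup\{x\}$ is a closed copy of $S_{2}$. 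Choose pairwise disjoint open neighborhoods $U_{n}\subset U$ of $x_{n}$ containing tails of $\{x(n,k)\}_{k}$ and avoiding $x$; then apply the inductive hypothesis $Q(\alpha,x_{n},U_{n})$ to the sequence $\{x(n,k)\}_{k}$ to obtain $Q_{n}\subset U_{n}$ with $\overline{Q_{n}}$ countable, $\overline{Q_{n}}\setminus\{x_{n}\}\subset U_{n}$, $x_{n}\in[Q_{n}]_{\alpha}$, and $x_{n}\notin[Q_{n}]_{\beta}$ for all $\beta<\alpha$. Set $Q=\bigcup_{n}Q_{n}$. The containments $Q\subset U$ and $\overline{Q}\setminus\{x\}\subset U$, together with countability of $\overline{Q}$, follow from the closed $S_{2}$ structure, which forces every convergent sequence in $\Gamma$ either to lie eventually in a single column or to accumulate to $x$ via a subsequence of $\{x_{n}\}$. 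The relation $x\in[Q]_{\alpha+1}$ is immediate, since $x_{n}\in[Q_{n}]_{\alpha}\subset[Q]_{\alpha}$ and $x_{n}\to x$.

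For a limit ordinal $\alpha$, I would fix an increasing sequence $\alpha_{n}<\alpha$ with $\sup\alpha_{n}=\alpha$, again apply $P(x,U)$, and then invoke $Q(\alpha_{n},x_{n},U_{n})$ along the $n$-th column of the fan to produce $Q_{n}\subset U_{n}$ with $x_{n}\in[Q_{n}]_{\alpha_{n}}$ but $x_{n}\notin[Q_{n}]_{\beta}$ for $\beta<\alpha_{n}$. With $Q=\bigcup Q_{n}$, the inclusion $x_{n}\in[Q]_{\alpha_{n}}$ combined with $x_{n}\to x$ places $x$ in $[Q]_{\alpha}$; for any $\beta<\alpha$, only finitely many columns contribute to $[Q]_{\beta}$, so the remaining $x_{n}$ are not approached from $Q$ at level $\beta$.

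The main obstacle, in both the successor and limit steps, will be the negative assertion $x\notin[Q]_{\beta}$ for $\beta<\alpha$ (respectively $\beta\leq\alpha$ in the successor step). The positive statements flow straight from the inductive hypothesis and sequential continuity, but the negative ones require ruling out any "shortcut" sequence from $Q$ to $x$. This is exactly where both hypotheses of the lemma must be used: the closed $S_{2}$ structure produced by $P(x,U)$ confines convergent sequences in $Q$ into the disjoint neighborhoods $U_{n}$ except for finitely many terms, while the point-countable $k$-network on the sequential space $X$ prevents any unexpected compact set from concentrating a low-level iterated sequential closure near $x$. Combining the inductive bound $x_{n}\notin[Q_{n}]_{\beta}$ with this confinement will show that any putative witness sequence for $x\in[Q]_{\beta}$ must itself descend to a convergent sequence in $\{x_{n}\}$ sitting at an iterated level strictly below $\alpha+1$ (respectively $\alpha$), contradicting the choice of the levels in the induction.
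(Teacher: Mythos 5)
The paper gives no proof of this lemma at all — it is quoted from Shibakov \cite{SA} — so your argument has to stand on its own. Its overall shape (transfinite induction on $\alpha$, using $P(x,U)$ to produce an $S_{2}$-fan over the given sequence and applying the inductive hypothesis column-by-column) is the natural one, and you are right to read the misprint ``$\overline{Q}\setminus\{x\}=U$'' as ``$\subset U$''. But there is a genuine gap exactly at what you yourself call the main obstacle, and the justification you sketch for it does not work. The closed copy of $S_{2}$ supplied by $P(x,U)$ is the set $\Gamma\cup\{x_{i}\}\cup\{x\}$; it controls convergent sequences inside $\Gamma$ only, whereas your $Q=\bigcup_{n}Q_{n}$ is not contained in $\Gamma$: each $Q_{n}$ is an arbitrary countable subset of $U_{n}$ handed to you by the inductive hypothesis. (Moreover, inside $S_{2}$ a set meeting infinitely many columns in finitely many points each is closed and discrete; it does not ``accumulate to $x$ via a subsequence of $\{x_{n}\}$''.) Consequently nothing in your construction rules out a diagonal sequence $q_{j}\in Q_{n_{j}}$ with distinct $n_{j}$ converging to $x$ — which would put $x\in[Q]_{1}$ and destroy $x\notin[Q]_{\beta}$ for $1\le\beta\le\alpha$ — or converging to points outside $U$, which would destroy both $\overline{Q}\setminus\{x\}\subset U$ and the countability of $\overline{Q}$.

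Note also that this cannot be repaired by shrinking the $U_{n}$: since $x_{n}\in U_{n}$ and $x_{n}\to x$, every neighborhood of $x$ meets all but finitely many $U_{n}$, so the family $\{U_{n}\}$ can never be made locally finite at $x$. The confinement must therefore come from controlling the actual points of the $Q_{n}$'s (for instance by strengthening the inductive statement or selecting columns and pieces so that low-level diagonal limits are excluded), and this is precisely where the point-countable $k$-network has to do real work; in your write-up it appears only in an unverifiable sentence (``prevents any unexpected compact set from concentrating\ldots''), not in any concrete step. The same unproved confinement is hidden in your limit-step claim that ``for any $\beta<\alpha$, only finitely many columns contribute to $[Q]_{\beta}$''. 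As it stands, the proposal establishes the base case and the easy positive half ($x\in[Q]_{\alpha}$), but not the negative clauses, the countability of $\overline{Q}$, or $\overline{Q}\setminus\{x\}\subset U$, which are the substance of the lemma.
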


\begin{lemma}\cite{LFC2009}\label{l4}
Let $G$ be a sequential rectifiable space. If $G$ has a
point-countable $k$-network, then $G$ is metrizable if and only if
$G$ contains no closed copy of $S_{2}$.
\end{lemma}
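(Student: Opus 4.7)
The ``only if'' direction is immediate: a metrizable space is first-countable, yet the point $\infty \in S_{2}$ has uncountable character (a straightforward diagonalization against basic neighborhoods of $\infty$), so no closed copy of $S_{2}$ can embed in a metrizable space.

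For the converse, suppose $G$ is a sequential rectifiable space with a point-countable $k$-network $\mathscr{P}$ and contains no closed copy of $S_{2}$. The plan is to reduce metrizability to weak first countability and then invoke Corollary~\ref{t3}. First, the contrapositive of Lemma~\ref{l0} forces $G$ to be Fr\'echet-Urysohn (otherwise a closed copy of $S_{2}$ would embed), and Corollary~\ref{t15} then upgrades this to strongly Fr\'echet-Urysohn. Because rectifiable spaces are homogeneous, it suffices to exhibit a countable weak base at the right neutral element $e$ and transport it via the rectification $p$ to every other point.

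The candidate weak base at $e$ is the countable family $\mathscr{F}$ consisting of all finite unions of elements of $\mathscr{P}_{e}:=\{P\in\mathscr{P}:e\in P\}$; this is countable because $\mathscr{P}$ is point-countable. Two things must be checked. Applying the $k$-network property to the compact singleton $\{e\}$ inside an arbitrary open $V\ni e$ immediately yields some $P\in\mathscr{P}_{e}$ with $P\subset V$, so $\mathscr{F}$ is a network at $e$. The crucial step is the weak base property: a set $U$ with $e\in U$ must be a neighborhood of $e$ whenever some $P\in\mathscr{F}$ satisfies $P\subset U$ (and likewise at other points using the $p$-translates of $\mathscr{F}$). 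I would argue by contradiction: a failure would, via the Fr\'echet-Urysohn property, produce a sequence $x_{n}\to e$ with $x_{n}\notin U$; then applying the $k$-network to the compact set $\{x_{n}:n\in\mathbb{N}\}\cup\{e\}$ inside a suitable open enlargement, and using strong Fr\'echet-Urysohn to pass to a tail that is captured by a single element of $\mathscr{F}$ sitting inside $U$, would contradict $x_{n}\notin U$.

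The principal obstacle is exactly this weak-base verification: one must cleanly combine the $k$-network property, strong Fr\'echet-Urysohn convergence, and the rectifiable multiplication (to move the base at $e$ around by left translations through $p$) into a single contradiction, since $\mathscr{F}$ is only \emph{a priori} a $cs$-network and the jump from $cs$-network to weak base in a sequential setting is where Fr\'echet-Urysohn convergence must be used essentially. Once the countable weak base at $e$ is in hand, homogeneity yields weak first countability of $G$, and Corollary~\ref{t3} delivers metrizability.
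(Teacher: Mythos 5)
Your opening moves are fine and are the standard ones: the ``only if'' direction is indeed immediate, and in the converse the use of Lemma~\ref{l0} to force the Fr\'echet-Urysohn property and of Corollary~\ref{t15} to upgrade it to strongly Fr\'echet-Urysohn is exactly how one begins. (Note that this paper does not prove the lemma at all; it quotes it from \cite{LFC2009}, so there is no in-paper argument to compare with beyond these reductions.) The genuine gap is your weak-base step. The family $\mathscr{F}$ of finite unions of members of $\mathscr{P}_{e}$ is a countable network at $e$, but it is not in general a weak base at $e$, and the contradiction you sketch cannot be reached: a member of a $k$-network containing a point need not be a sequential neighborhood of that point, so knowing that some $F\in\mathscr{F}$ lies inside $U$ gives you no control over tails of sequences converging to $e$. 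Concretely, take $G=\mathbb{R}$ (a metrizable topological group, hence rectifiable) with the countable $k$-network $\mathscr{P}$ of all closed intervals with rational endpoints, and transport $\mathscr{F}_{0}$ to other points by translation as you propose. The set $W=[0,1)$ then contains, at each of its points $x$, a member of the corresponding family (for instance $[0,q]\subset W$ at $x=0$, and $[x,x+q]\subset W$ at $x\in(0,1)$ with $q$ rational), yet $W$ is not open. So the very criterion you intend to verify is false for this candidate family even in the simplest metrizable case, which means the verification must break down; no combination of the $k$-network property with strong Fr\'echet-Urysohn convergence can rescue this particular choice of $\mathscr{F}$. (Your $\mathscr{F}$ also fails condition (a) of the weak-base definition, but that is a minor point by comparison.)

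What is needed after the reduction to the strongly Fr\'echet-Urysohn case is a genuinely different ingredient, not a direct weak-base construction from the sets of $\mathscr{P}$ containing $e$. The known routes are: (i) apply the Gruenhage--Michael--Tanaka theory of point-countable covers, by which a strongly Fr\'echet-Urysohn space with a point-countable $k$-network has a point-countable base and is therefore first-countable, and then conclude metrizability from Gul'ko's theorem for rectifiable spaces (the first-countable, indeed weakly first-countable, case is Corollary~\ref{t3}); or (ii) extract from the $k$-network a countable $cs^{\ast}$-network (equivalently, build an $sn$-network at $e$ from traces of $\mathscr{P}$ on convergent sequences, not from the members that happen to contain $e$) and invoke the metrization results for homogeneous spaces of countable $cs^{\ast}$-character in the spirit of \cite{BT1}. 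As written, your argument establishes only the easy direction and the Fr\'echet-Urysohn reduction; the passage from ``strongly Fr\'echet-Urysohn with a point-countable $k$-network'' to ``weakly first-countable'' is precisely the nontrivial content of the lemma and remains unproved.
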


\begin{theorem}
Let $G$ be a sequential rectifiable space with a point-countable $k$-network.
If $so(G)<\omega_{1}$, then $G$ is metrizable.
\end{theorem}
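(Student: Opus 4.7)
The plan is to argue by contradiction: suppose $G$ is not metrizable. Lemma~\ref{l4} then forces $G$ to contain a closed copy of $S_{2}$. Since $S_{2}$ is not Fr$\acute{e}$chet-Urysohn (the apex $\infty$ lies in the closure of the isolated points $\{x_{n}(m)\}$, but no sequence from that set converges to $\infty$), and a closed subspace of a Fr$\acute{e}$chet-Urysohn space is Fr$\acute{e}$chet-Urysohn, the whole space $G$ must fail to be Fr$\acute{e}$chet-Urysohn.

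With this in hand, $G$ is a sequential, non-Fr$\acute{e}$chet-Urysohn rectifiable space with a point-countable $k$-network, so Lemma~\ref{l3} applies and yields property $P(x, U)$ for every $x \in G$ and every open $U \subset G$. This is precisely the input required by Lemma~\ref{l2}, which upgrades the conclusion to property $Q(\alpha, x, U)$ for every countable ordinal $\alpha$, every $x \in G$, and every open $U \subset G$.

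Set $\alpha_{0} = so(G)$; by hypothesis $\alpha_{0} < \omega_{1}$, so $\alpha_{0}+1$ is still a countable ordinal. Pick a non-isolated point $x \in G$ (one exists inside the embedded $S_{2}$), take $U = G$, and choose any faithfully indexed sequence $\{x_{i}\}_{i \in \mathbb{N}} \subset G$ converging to $x$. Applying $Q(\alpha_{0}+1, x, G)$ produces $Q \subset G$ with $x \in [Q]_{\alpha_{0}+1}$ and $x \notin [Q]_{\beta}$ for every $\beta < \alpha_{0}+1$; in particular $x \notin [Q]_{\alpha_{0}}$. On the other hand $[Q]_{\alpha_{0}+1} \subset \overline{Q}$ holds unconditionally (sequential closure iterates always lie inside the topological closure), and $\overline{Q} = [Q]_{\alpha_{0}}$ by the definition of sequential order. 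Combined, these two inclusions yield $x \in [Q]_{\alpha_{0}}$, contradicting the conclusion of $Q(\alpha_{0}+1, x, G)$. Therefore $G$ is metrizable.

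The only real obstacle is verifying the hypotheses of Lemmas~\ref{l3} and~\ref{l2}, which is exactly what the reduction to non-Fr$\acute{e}$chet-Urysohn via the embedded $S_{2}$ unlocks. Once that is in place the argument is pure ordinal bookkeeping: Lemma~\ref{l2} manufactures, for every countable $\alpha$, a set of ``sequential depth'' exactly $\alpha$ at $x$, whereas $so(G) < \omega_{1}$ asserts a single countable depth simultaneously works for every subset of $G$. These two statements are flatly incompatible, and the contradiction closes the proof.
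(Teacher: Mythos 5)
Your proof is correct and follows essentially the same route as the paper: both arguments rest on Lemmas~\ref{l3}, \ref{l2} and \ref{l4}, deriving a contradiction between property $Q(so(G)+1, x, G)$ and the definition of sequential order. The only difference is organizational -- you argue by contradiction from non-metrizability via the closed copy of $S_{2}$, while the paper first proves the claim that $G$ is Fr\'echet--Urysohn and then invokes Lemma~\ref{l4}; the mathematical content is the same.
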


\begin{proof}
Suppose that $so(G)=\alpha$.

Claim: The space $G$ is Fr$\acute{e}$chet-Urysohn.

Suppose not, it follows from Lemmas~\ref{l3} and~\ref{l2} that $G$
has property $Q(\alpha +1, e, G)$. Clearly, since $G$ has the
property $Q(\alpha +1, e, G)$, we have $so(G)\geq \alpha +1>\alpha$, which
is a contradiction.

It follows from the claim that $G$ is a Fr$\acute{e}$chet-Urysohn rectifiable
space, and hence $G$ contains no closed copy of $S_{2}$. Since $G$
is a Fr$\acute{e}$chet-Urysohn rectifiable space with a point-countable
$k$-network, the space $G$ is metrizable by Lemma~\ref{l4}.
\end{proof}

\begin{proposition}
Let $\mathcal{P}$ be a topological property that is productive and preserved by continuous maps. Then
the following are equivalent for a rectifiable space $G$.

(i)Every subset with the property $\mathcal{P}$ of $G$ has countable pseudocharacter.

(ii)Every subset with the property $\mathcal{P}$ of $G$ has regular $G_\delta$-diagonal\footnote{A space $X$ is said to have a {\it regular $G_{\delta}$-diagonal}
if the diagonal $\Delta=\{(x, x): x\in X\}$ can be represented as
the intersection of the closures of a countable family of open
neighborhoods of $\Delta$ in $X\times X$.}.
\end{proposition}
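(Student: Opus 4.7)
My plan is to treat (ii)$\Rightarrow$(i) as essentially formal and concentrate on (i)$\Rightarrow$(ii), which I would prove by exploiting the structural map $q$ from Theorem~\ref{t9}. For (ii)$\Rightarrow$(i): if $H\subset G$ has $\mathcal P$ and $\Delta_H=\bigcap_n\overline{W_n}$ for open neighborhoods $W_n\supset\Delta_H$ in $H\times H$, then the inclusions $\Delta_H\subset W_n\subset\overline{W_n}$ force $\Delta_H=\bigcap_n W_n$, so $\Delta_H$ is a $G_\delta$; slicing by a fixed first coordinate then writes each singleton $\{x\}\subset H$ as a countable intersection of open subsets of $H$, which is exactly countable pseudocharacter.

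For (i)$\Rightarrow$(ii), the key observation is $q^{-1}(e)=\Delta_G$: since the rectification $\varphi(x,y)=(x,q(x,y))$ is a bijection with $\varphi(x,x)=(x,e)$, the equation $q(x,y)=e$ forces $y=x$. Given $H\subset G$ with $H\in\mathcal P$, productivity gives $H\times H\in\mathcal P$, and preservation under continuous images makes $Y:=q(H\times H)$, viewed as a subspace of $G$, another $\mathcal P$-subset. Hypothesis (i) then says $Y$ has countable pseudocharacter, so the point $e=q(x,x)\in Y$ can be written as $\{e\}=\bigcap_n U_n$ with $U_n$ open in $Y$. Regularity of $Y$ (inherited from $G$) lets me shrink further to open sets $V_n\subset Y$ with $e\in V_n$ and $\mathrm{cl}_Y V_n\subset U_n$, so that $\bigcap_n\mathrm{cl}_Y V_n=\{e\}$.

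I would finish by pulling this regular $G_\delta$ representation of $e$ in $Y$ back through the restriction $\tilde q:=q|_{H\times H}\colon H\times H\to Y$. Set $W_n:=\tilde q^{-1}(V_n)$; each $W_n$ is open in $H\times H$ and contains $\Delta_H$ because $\tilde q(\Delta_H)=\{e\}\subset V_n$. Continuity of $\tilde q$ yields $\mathrm{cl}_{H\times H}W_n\subset\tilde q^{-1}(\mathrm{cl}_Y V_n)$, and intersecting over $n$ collapses the right-hand side to $\tilde q^{-1}(\{e\})=\Delta_H$; combined with $\Delta_H\subset W_n\subset\mathrm{cl}_{H\times H}W_n$ this gives $\Delta_H=\bigcap_n\mathrm{cl}_{H\times H}W_n$, i.e.\ $H$ has a regular $G_\delta$-diagonal.

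I do not anticipate any serious obstacle: the only delicate bookkeeping is ensuring that closures are computed inside the subspaces $Y\subset G$ and $H\times H\subset G\times G$ rather than in the ambient spaces, and once this convention is adopted the continuity-based pullback of closures is clean and no extra assumption such as closedness of $H$ in $G$ is needed.
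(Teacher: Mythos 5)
Your proof is correct and takes essentially the same route as the paper: both arguments apply productivity and preservation under continuous maps to $q(H\times H)$, use (i) to write $\{e\}$ as a (regular) $G_\delta$ in that subspace, and pull this back along $q$ via the identity $q^{-1}(e)\cap (H\times H)=\Delta_H$ to obtain the regular $G_\delta$-diagonal. Your explicit use of regularity to shrink the $U_n$ and your care with relative closures only make precise what the paper's proof leaves implicit.
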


\begin{proof}
(ii) $\to$ (i) obvious.

(i) $\to$ (ii). Let $A$ be a subset of $G$ and have the property $\mathcal{P}$. Since $q: G\times G\to G$ is continuous and the property $\mathcal{P}$ is productive and preserved by continuous maps, then $q(A\times A)=B$ is a subset of $G$ and $B$ has the property $\mathcal{P}$. Then $e\in B$ since $q(x, x)=e$. Therefore, $e$ is a $G_{\delta}$-set of $B$, let $\{U_n: n\in \mathbb{N}\}$ be a family of countable open subsets with $e\in U_n$ and $\overline{U_{n+1}}\subset U_n$. Then $\Delta=\{(x, x): x\in A\}=\bigcap_{n\in\mathbb{N}}q^{-1}(U_n)=\bigcap_{n\in\mathbb{N}}\overline{q^{-1}(U_n)}$. In fact, let $(x, y)\in \bigcap_{n\in\mathbb{N}}q^{-1}(U_n)$. For each $n\in \mathbb{N}$, we have $(x, y)\in q^{-1}(U_n)$, and hence $q(x, y)\in U_n$, which follows that $\pi_2(\varphi(x, y))\in U_n$, $\pi_2(x, y')\in U_n$, where $\varphi(x, y)=(x, y')$ and $y'\in U_n$ for each $n\in \mathbb{N}$. Therefore, $y'=e$. Since $\varphi(x, x)=e$, $\varphi(x, y)=e$ and $\varphi$ is one-to-one, we have $x=y$. Therefore $\Delta=\bigcap_{n\in\mathbb{N}}q^{-1}(U_n)$. Since $\overline{q^{-1}(U_{n+1})}\subset q^{-1}(\overline{U_{n+1}})\subset q^{-1}(U_n)$, then we can see that $A$ has a regular $G_\delta$-diagonal.

\end{proof}

It is well known that a countably compact (compact) space with a $G_\delta$-diagonal is metrizable, we have the following.
\begin{corollary}
The following are equivalent for a rectifiable space $G$.

(i)Every compact (countably compact) subset is first-countable.

(ii)Every compact (countably compact) subset is metrizable.
\end{corollary}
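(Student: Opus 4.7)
The plan is to deduce this corollary directly from the preceding Proposition by specializing $\mathcal{P}$ to compactness (respectively, countable compactness). Compactness is productive by Tychonoff's theorem and is plainly preserved by continuous maps, so it meets the Proposition's hypotheses verbatim; for the countably compact version the only use of productivity in the Proposition is to pass from $A$ to the square $A\times A$ and its continuous image $q(A\times A)$, so one only needs countable compactness to be inherited in that single step.

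The implication (ii)$\Rightarrow$(i) is immediate, since every metrizable space is first-countable.

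For (i)$\Rightarrow$(ii), I would first observe that a first-countable $T_1$ space automatically has countable pseudocharacter: each point equals the intersection of the countably many members of its local base. Thus, under hypothesis (i), every compact (countably compact) subset $A$ of $G$ has countable pseudocharacter, and the Proposition then upgrades this to a regular $G_\delta$-diagonal on each such $A$. The conclusion now follows from the classical theorem of Chaber that every countably compact Hausdorff space with a $G_\delta$-diagonal is metrizable (in particular, the same holds for compact Hausdorff spaces).

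The only non-routine step is the Chaber invocation, which we treat as a black box. The mild obstacle is to make sure the Proposition really applies in the countably compact case, since countable compactness is not productive in general; however, as noted above, the Proposition only needs $A\times A$ and $q(A\times A)$ to have property $\mathcal{P}$, so this reduces either to a direct verification in the rectifiable setting or --- more cheaply --- to noting that one may restrict attention to the compact case for the productivity-sensitive step and then lift to countably compact subsets using that the closure in $G$ of a countably compact subset inherits the same $G_\delta$ behaviour through $q$.
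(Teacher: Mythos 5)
Your route is exactly the paper's own (implicit) one: the corollary is stated right after the Proposition with only the remark that a countably compact (compact) space with a $G_\delta$-diagonal is metrizable, so the intended argument is precisely your chain -- (ii)$\Rightarrow$(i) trivially, and (i)$\Rightarrow$(ii) by observing that first-countability gives countable pseudocharacter, feeding this into the Proposition to get a regular $G_\delta$-diagonal on each compact (countably compact) subset, and then invoking Chaber's theorem. For the compact case this is complete, since compactness is productive and preserved by continuous maps, so the Proposition applies verbatim.

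The loose end is the countably compact case, which you correctly flag but do not actually close, and of your two suggested patches the second one fails: the closure in $G$ of a countably compact subset need not be compact or countably compact, and hypothesis (i) gives no information about such closures, so there is nothing to ``lift'' through $q$. The clean fix is to make your first suggestion explicit: under (i) a countably compact subset $A$ is first-countable, hence sequentially compact; the square of a sequentially compact space is sequentially compact and in particular countably compact, so $B=q(A\times A)$ is countably compact, (i) applies to $B$, and the computation in the Proposition's proof then yields the regular $G_\delta$-diagonal on $A$, after which Chaber's theorem finishes. (The paper glosses over this same point -- countable compactness is not productive -- so on this step you are more careful than the source, just not quite done.)
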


\begin{corollary}
Let $G$ be a rectifiable space of countable pseudocharacter. Then $G$ has a regular $G_\delta$-diagonal.
\end{corollary}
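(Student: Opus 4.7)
The plan is to apply the preceding Proposition with $\mathcal{P}$ taken to be the trivial property ``is a topological space,'' which is vacuously productive and preserved by continuous maps. With this choice, condition (i) of the Proposition reads ``every subset of $G$ has countable pseudocharacter,'' and I would verify this by the routine observation that if $\{x\} = \bigcap_{n} U_{n}$ in $G$ and $x \in A \subset G$, then $\{x\} = \bigcap_{n}(U_{n} \cap A)$ is a countable intersection of relatively open sets in $A$. The Proposition's equivalence then delivers condition (ii), and in particular $G$ itself (viewed as a subset of $G$) has a regular $G_{\delta}$-diagonal.

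Alternatively, and perhaps more transparently, one can reproduce the core of the Proposition's argument directly on $G$. Using the countable pseudocharacter together with the regularity convention of the paper, choose open neighborhoods $\{U_{n}:n\in\mathbb{N}\}$ of the right neutral element $e$ with $\overline{U_{n+1}} \subset U_{n}$ and $\bigcap_{n} U_{n} = \{e\}$. The decisive identity is $q^{-1}(e) = \Delta$: the inclusion $\Delta \subset q^{-1}(e)$ is just $q(x,x)=e$ from Theorem~\ref{t9}, while injectivity of the rectification $\varphi$ forces the reverse, since $\varphi(x,y) = (x, q(x,y)) = (x,e) = \varphi(x,x)$ implies $x=y$. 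It then follows that $\Delta = \bigcap_{n} q^{-1}(U_{n})$, and the nesting $\overline{q^{-1}(U_{n+1})} \subset q^{-1}(\overline{U_{n+1}}) \subset q^{-1}(U_{n})$ yields $\Delta = \bigcap_{n} \overline{q^{-1}(U_{n})}$, exhibiting $\Delta$ as a regular $G_{\delta}$-set in $G \times G$.

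I do not anticipate any real obstacle: the corollary is a direct specialization of the Proposition, and the only delicate point is the identity $q^{-1}(e) = \Delta$, which was already extracted from the injectivity of $\varphi$ inside the Proposition's proof (applied there to $B = q(A \times A)$) and now is used verbatim with $A = G$.
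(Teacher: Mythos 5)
Your proposal is correct and matches the paper's intent: the corollary is stated as a direct specialization of the preceding Proposition, and both your routes (trivial $\mathcal{P}$ plus hereditariness of countable pseudocharacter, or running the Proposition's argument with $A=G$ via $q^{-1}(e)=\Delta$ and $\overline{q^{-1}(U_{n+1})}\subset q^{-1}(\overline{U_{n+1}})\subset q^{-1}(U_n)$) are exactly the argument the paper relies on. No gaps.
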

\bigskip

\section{Compactifications of rectifiable spaces}
In this section, we assume that all spaces are Tychonoff.

 Note that a rectifiable space is metrizable if its $\pi$-character is countable \cite{G1996},  by the same proof of \cite[Lemma 2]{Liu2009}, we can prove the following.

\begin{lemma}\label{lemma-1}
Let $G$ be a non-locally compact rectifiable space. If for each $y\in Y=bG\setminus G$, there exists an open neighborhood $U(y)$ of $y$ such that every countably compact subset of $U(y)$ is metrizable and $\pi\chi(U(y))\leq \omega$, then $G$ is metrizable and locally separable.
\end{lemma}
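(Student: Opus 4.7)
The plan splits cleanly into two halves: first I establish metrizability of $G$ by transferring the countable $\pi$-character of $U(y)$ into $G$ and invoking the theorem cited from \cite{G1996} at the opening of this section; second, I establish local separability by cushioning a point of $G$ inside a compact metrizable piece of $U(y)$. In both halves the engine is the homogeneity of rectifiable spaces, which turns a statement at one point into the corresponding global statement.

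For the first half, I would fix any $y \in Y$; such a $y$ exists because $G$ non-locally compact forces $G$ non-compact, hence $Y \neq \emptyset$. Since $G$ is dense in $bG$ and $U(y)$ is open in $bG$, the set $V = U(y) \cap G$ is a non-empty open subspace of $G$. The $\pi$-character at a point depends only on any open neighborhood of that point, so for every $x \in V$ one has
$$\pi\chi(x, G) = \pi\chi(x, U(y)) \leq \pi\chi(U(y)) \leq \omega.$$
Homogeneity of $G$ then promotes this to $\pi\chi(G) \leq \omega$, and the cited result yields metrizability of $G$.

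For the second half, by homogeneity it suffices to produce a single separable open neighborhood in $G$. I would pick any $x \in V$ and use the normality of the compact Hausdorff space $bG$ to choose an open $W \subset bG$ with $x \in W$ and $\overline{W}^{bG} \subset U(y)$. Being closed in $bG$, $\overline{W}^{bG}$ is compact, and so is a countably compact subset of $U(y)$; by hypothesis it is metrizable, and a compact metrizable space is second countable. Since second countability is hereditary, the open neighborhood $W \cap G$ of $x$ in $G$ is second countable, hence separable. A last appeal to the homogeneity of $G$ then delivers local separability.

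I do not anticipate a serious obstacle. The one step worth flagging is the transfer of countable $\pi$-character from $U(y)$ into $G$: it rests on the routine but crucial fact that $\pi$-character at a point agrees in any open neighborhood of that point, so that the bound in $U(y)$ becomes the bound at a common point of $G$. Everything else is compact-Hausdorff bookkeeping, the hereditariness of second countability, and two applications of the homogeneity of rectifiable spaces.
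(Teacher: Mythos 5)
Your argument rests on reading the hypothesis as saying that $U(y)$ is open in the compactification $bG$; both halves of your proof need $U(y)\cap G\neq\emptyset$ (the set $V=U(y)\cap G$ in the first half, and the compact set $\overline{W}\subset U(y)$ meeting $G$ in the second). That is not the intended reading. The lemma is the rectifiable analogue of \cite[Lemma 2]{Liu2009}, and the neighborhoods $U(y)$ are neighborhoods of $y$ \emph{in the remainder} $Y=bG\setminus G$: this is the only reading under which the lemma can be applied later in the section. In the proof of Proposition~\ref{proposition-1} the hypothesis of the lemma is verified by establishing that $Y$ is first-countable (Case 1) or that $\pi\chi(Y)\leq\omega$ (Case 2) --- statements purely about the subspace $Y$ --- and in the subsequent theorem and corollaries the $U(y)$'s come from local properties of $Y$ alone (locally quasi-$G_\delta$-diagonal, locally point-countable base, locally $\gamma$-space). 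Under your reading, countable $\pi$-character of a $bG$-open set already presupposes control of the $\pi$-character at points of $G$, which is exactly what the lemma is supposed to produce, so the lemma would be unusable for these applications; under the correct reading, $U(y)\cap G=\emptyset$ and your central step fails outright, taking both the metrizability and the local separability arguments with it.

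The real content of the lemma is a transfer of properties across the boundary from the remainder to $G$, and that requires different tools, none of which appear in your proposal. Since rectifiable spaces are homogeneous, a non-locally compact $G$ is nowhere locally compact, hence $Y$ is dense in $bG$; density of $Y$ upgrades the hypothesis to $\pi\chi(y,bG)=\pi\chi(y,Y)\leq\omega$ for every $y\in Y$. One then follows the proof of \cite[Lemma 2]{Liu2009} (which relies on Arhangel'ski\v\i's remainder machinery from \cite{A2005,A2007}), with the single change indicated by the paper: the metrization criterion for topological groups is replaced by Gul'ko's theorem \cite{G1996} that a rectifiable space of countable $\pi$-character is metrizable. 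Local separability must likewise be extracted from the metrizability of countably compact subsets of the remainder, not from compact subsets of $bG$ lying inside $U(y)$. So the gap is not a missing technicality but the main point: your proof never crosses from $Y$ into $G$, because it implicitly assumes the hypothesis already lives on $G$.
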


A space $X$ is called having the property (*): if the cardinality of $X$ is Ulam non-measure, then $X$ is weakly HN-complete\footnote{A space $X$ is {\it weakly HN-complete} if the remainder $Z$ of $X$ in the \v{C}ech-Stone compactification $\beta X$ of $X$ is a space of point-countable type.}. A paracompact space has the property (*) since a paracompact space with Ulam non-measurable cardinality is HN-complete \cite{AP1984, E1989}, and hence it is weakly HN-complete.

\begin{proposition}\label{proposition-1}
Let $G$ be a non-locally compact rectifiable space with property (*). If for each $y\in Y=bG\setminus G$, there exists an open neighborhood $U(y)$ of $y$ such that (i) every compact subset of $U(y)$ is a $G_\delta$-subset of $U(y)$; (ii) every countably compact or Lindel\"of $p$-subspace of $U(y)$ is metrizable. Then $G, bG$ are separable and metrizable.
\end{proposition}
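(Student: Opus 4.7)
The plan is to apply Lemma~\ref{lemma-1} after shrinking each $U(y)$ to an open subset of $bG$ with compact metrizable closure, and then to bootstrap the conclusion of that lemma to the stronger assertion that both $G$ and $bG$ are separable and metrizable, using property $(*)$ and the homogeneity provided by the rectification.

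First, since $bG$ is compact Hausdorff and hence locally compact, for each $y\in Y$ I can choose an open neighborhood $V(y)$ of $y$ in $bG$ with $\overline{V(y)}\subset U(y)$ and $\overline{V(y)}$ compact. Then $\overline{V(y)}$ is a compact, hence Lindel\"of $p$-subspace of $U(y)$, so hypothesis (ii) forces $\overline{V(y)}$ to be metrizable. Consequently $V(y)$ is an open metrizable subspace of $bG$, so in particular $\pi\chi(V(y))\leq\omega$, and every countably compact subset of $V(y)$ is a countably compact subset of $U(y)$ and is therefore metrizable by (ii). Lemma~\ref{lemma-1} then applies with $V(y)$ in place of $U(y)$, yielding that $G$ is metrizable and locally separable.

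Second, to upgrade local separability of $G$ to global separability, I would use property $(*)$. Under the standing Ulam non-measurability assumption on $|G|$, $G$ is weakly HN-complete, so the remainder $\beta G\setminus G$ is of point-countable type. Since $G$ is rectifiable (and hence homogeneous) and metrizable, it decomposes as a topological sum of open separable metrizable clopen pieces; weak HN-completeness, combined with the homogeneity from the rectification and the Souslin-type arguments of Section~4, should force this sum to be countable, giving that $G$ is separable and metrizable, in particular second countable and Lindel\"of.

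Third, for the separable metrizability of $bG$, I combine the local metrizability of $bG$ at each point of $Y$ (via the compact metrizable neighborhood $\overline{V(y)}$ produced in the first step) with the fact that $G$ is now second countable. Invoking classical Arhangel'ski\v{i}-type remainder theorems of the kind used in \cite{A2005, A2007, A2009, Liu2009}, I would conclude that $Y$ is Lindel\"of and metrizable, and hence that $bG$ has countable weight; for a compact Hausdorff space this is equivalent to metrizability, so $bG$ is separable and metrizable. The principal obstacle is the second step: a generic locally separable metrizable homogeneous space --- for instance an uncountable discrete rectifiable space --- need not be separable, so the argument must make essential use of property $(*)$ and the rectification, most naturally via the canonical continuous surjection $\beta G\to bG$, which is perfect and restricts to a perfect map $\beta G\setminus G\to Y$ allowing information to be transferred between the two remainders.
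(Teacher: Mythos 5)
Your first step misreads the hypothesis, and this derails the whole plan. The neighbourhoods $U(y)$ in the statement are open neighbourhoods of $y$ in the remainder $Y=bG\setminus G$, not in $bG$ (this is how they arise in the intended applications, where $Y$ locally has a quasi-$G_\delta$-diagonal, a point-countable base, etc.). Since $G$ is dense in $bG$, no nonempty open subset of $bG$ lies inside $Y$, so you cannot choose $V(y)$ open in $bG$ with $\overline{V(y)}\subset U(y)$; and $Y$ itself is not locally compact --- indeed the paper's proof starts by noting that (ii) forces $Y$ not to be even locally countably compact (otherwise $G$ would be closed in $bG$, hence compact). Consequently your compact metrizable neighbourhood $\overline{V(y)}$, and with it the cheap verification of $\pi\chi\le\omega$ needed for Lemma~\ref{lemma-1}, is unavailable. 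Earning countable $\pi$-character at points of $Y$ is precisely the first substantive step of the paper's proof: by \cite{A2009}, $Y$ is pseudocompact or Lindel\"of; the pseudocompact case is killed using (i) (points of $Y$ are $G_\delta$, so $Y$ is first countable, then Henriksen--Isbell and countable type force $Y$ compact, contradicting non-local compactness of $G$); in the Lindel\"of case, countable pseudocharacter of $Y$ gives Ulam non-measurability of $|Y|$ and hence of $|G|$ (as $G$ is a remainder of $Y$), property (*) then yields weak HN-completeness of $G$, and Arhangel'ski\v{\i}'s theorem that $G_\delta$-points of the remainder are points of bisequentiality gives $\pi\chi(Y)\le\omega$, after which Lemma~\ref{lemma-1} applies. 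None of this machinery appears in your proposal.

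The second gap is the one you flag yourself: passing from ``$G$ locally separable metrizable'' to ``$G$ separable''. This is the core counting argument of the paper, and gesturing at ``Souslin-type arguments'' and a perfect map $\beta G\to bG$ does not supply it. The paper writes $G=\bigoplus_{\alpha}G_\alpha$ with each $G_\alpha$ separable metrizable, lets $F\subset Y$ be the compact set where this family is not locally finite, covers $F$ by finitely many neighbourhoods satisfying (ii) to see $F$ is separable metrizable, and uses (i) to see $F$ is a $G_\delta$ in $Y$; for $y\in Y\setminus F$ it chooses $O_y$ with $\overline{O_y}\cap F=\emptyset$, notes $\overline{O_y}$ meets only finitely many $G_\alpha$, so $Cl_Y(O_y)$ lies in $\overline{L}\setminus L$ for a separable metrizable $L$, hence is a Lindel\"of $p$-space, hence metrizable by (ii). This makes $Y$ the union of $F$ and countably many Lindel\"of, locally separable metrizable closed pieces, so $Y$ has a countable network, whence $c(Y)\le\omega$, $c(G)\le\omega$, and a metrizable space with countable Souslin number is separable; finally $bG=G\cup Y$ has a countable network and, being compact, is metrizable. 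Your proposal neither verifies the entry hypotheses of Lemma~\ref{lemma-1} nor carries out this decomposition, so it does not constitute a proof.
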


\begin{proof}
From condition (ii), we can see that $Y$ is not locally countably compact, otherwise $G$ is closed in $bG$ and is compact.

By \cite[Theorem 3.1]{A2009}, $Y$ is pseudocompact or Lindel\"of.

{\bf Case 1.} The space $Y$ is pseudocompact. Then $Y$ is first-countable since each singleton of $Y$ is a $G_\delta$-set. Since $Y$ is not locally countably compact, the rectifiable space $G$ is locally separable and metrizable by Lemma ~\ref{lemma-1}. $Y$ is Lindel\"of \cite{HI1958} since $G$ is of countable type. Therefore, $Y$ is compact,  and hence $G$ is locally compact, which is a contradiction.

{\bf Case 2.} The space $Y$ is Lindel\"of. Since $Y$ is a space of countable pseudocharacter, it follows that the cardinality of $Y$ is Ulam non-measurable \cite{AP1984}. The space $G$ is not locally compact, then $G$ is nonwhere locally compact since $G$ is homogeneous. It follows that $G$ is a remainder of $Y$, so the cardinality of $G$ is also Ulam non-measurable \cite{AP1984}. Then $G$ is weakly HN-complete. By \cite[Theorem 4]{A20091}, each $G_\delta$-point of $Y$ is a point of bisequentiality of $Y$, it follows that $\pi \chi(Y)\leq \omega$. Therefore, $G$ is locally separable and metrizable by Lemma ~\ref{lemma-1}. We write $G=\bigoplus_{\alpha\in A}G_{\alpha}$, where $G_\alpha$ is a
separable metrizable subset for each $\alpha \in A$. Let
$\eta=\{G_\alpha: \alpha\in A\}$, and let $F$ be the set of all
points of $bG$ at which $\eta$ is not locally finite. Since $\eta$
is discrete in $X$. Then $F\subset bG\setminus G$. It is easy to
see that $F$ is compact, we can find finitely many closed
neighborhoods that satisfy (ii) to cover $F$, hence $F$ is separable
and metrizable, thus $F$ has a countable network. Put
$M=Y\setminus F$. For each point $y\in M$, there is an open neighborhood
$O_y$ satisfying (ii) in $bG$ such that $\overline{O_y}\cap F=\emptyset$. Since
$\eta$ is discrete, the set $\overline{O_y}$ meets at most finitely many
$G_\alpha$. Let $L=\cup\{G_\alpha: G_\alpha\cap \overline{O_y}\neq
\emptyset\}$. Then $L$ is separable metrizable. It follows that $\overline{L}\setminus L$ is a remainder of $L$, and hence it is a Lindel\"of $p$-space by \cite[Theorem 2.1]{A2005}.
$Cl_Y(O_y)\subset \overline{L}\setminus L$, then $Cl_Y(O_y)$ is a Lindel\"of $p$-space, hence it is separable and metrizable and $Y\setminus F$ is locally separable metrizable. Since $F$ is compact, there are finite many $\{U(y_i): i\leq k\}$ that satisfy (i) and cover $F$. Moreover, since each compact subset of $U(y_i) (i\leq k)$ is a $G_\delta$-set, the set $F$ is a $G_\delta$-set in $\cup\{U(y_i): i\leq k\}$. We write $F=\cap V_n$ with $V_n$ open in $Y$ and $Cl_Y(V_{n+1})\subset V_n$. Let $K_1=Y\setminus V_1, K_n=Cl(V_{n-1})\setminus V_n (n>1)$. Since $Y$ is Lindel\"of and $K_n$ is closed in $Y$, each $K_n$ is Lindel\"of and locally separable metrizable. Therefore, $K_n$ has a countable base for each $n$. Since $Y=F\cup (\cup\{K_n: n\in \mathbb{N}\})$, it follows that $Y$ has a countable network. Then $c(Y)\leq \omega$, hence $c(G)\leq \omega$. Since $G$ is a metrizable space with countable Souslin number, the space $G$ is separable and metrizable. It follows that $bG$ is separable and metrizable since $G$ and $Y$ both have countable networks.
\end{proof}

Recall that the space $X$ has a {\it quasi-$G_\delta$-diagonal} provided there is a sequence $\{\mathcal{G}(n): n\in \mathbb{N}\}$ of collections of open sets with property that, given distinct points $x, y\in X$, there is some $n$ with $x\in st(x, \mathcal{G}(n))\subset X\setminus \{y\}$. Obviously, ``$X$ has a $G_\delta$-diagonal" implies ``$X$ has a quasi-$G_\delta$-diagonal".

\smallskip

\begin{theorem}
Let $G$ be a non-locally compact, paracompact rectifiable space, and
$Y=bG\setminus G$ have locally quasi-$G_\delta$-diagonal. Then $G$ and $bG$ are
separable and metrizable.
\end{theorem}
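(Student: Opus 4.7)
My strategy is to apply Proposition~\ref{proposition-1}: it suffices to verify that $G$ has property (*) and that at each $y\in Y$ there is an open neighborhood $U(y)$ in $bG$ satisfying the local conditions (i) and (ii) of that proposition. Property (*) is free because $G$ is paracompact, as noted in the remark just before Proposition~\ref{proposition-1}, so the real work concerns (i) and (ii).

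Fix $y\in Y$. By the local quasi-$G_\delta$-diagonal hypothesis, choose an open neighborhood $V_y$ of $y$ in $Y$ such that the subspace $V_y$ has a quasi-$G_\delta$-diagonal, and pick an open set $U(y)\subset bG$ with $U(y)\cap Y=V_y$. The main facts I plan to invoke about quasi-$G_\delta$-diagonals are: a compact Hausdorff space with a quasi-$G_\delta$-diagonal is metrizable; a Lindel\"of $p$-space with a quasi-$G_\delta$-diagonal is metrizable; and in any regular space with a quasi-$G_\delta$-diagonal every compact subset is a $G_\delta$-set of the ambient space. These are standard strengthenings of the classical $G_\delta$-diagonal theorems to the quasi-$G_\delta$-diagonal setting, and a quasi-$G_\delta$-diagonal is inherited by subspaces, so each of the three conclusions applies directly to all subsets of $V_y$.

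To pass from subsets of $V_y$ to subsets of the larger $U(y)$, I would use that $G$, being non-locally compact and homogeneous (rectifiable spaces are homogeneous), is nowhere locally compact; in particular, the $G$-part of any compact $K\subset U(y)$ has empty interior in $U(y)$. Combined with the density of $G$ in $bG$, this lets me localize compact, countably compact, or Lindel\"of $p$-subsets of $U(y)$ so that the essential information sits inside $V_y=U(y)\cap Y$, where the quasi-$G_\delta$-diagonal then provides the metrizability demanded by (ii) and the $G_\delta$-structure demanded by (i).

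Once (i) and (ii) are verified, Proposition~\ref{proposition-1} immediately yields that $G$ and $bG$ are separable and metrizable. The principal technical obstacle I anticipate is exactly this reduction: transferring the quasi-$G_\delta$-diagonal from $V_y\subset Y$ to usable conclusions about the ambient neighborhood $U(y)\subset bG$, so that compact, countably compact, and Lindel\"of $p$-subspaces of $U(y)$ (not merely those entirely contained in $Y$) can be handled uniformly.
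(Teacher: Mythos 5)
Your overall route is the paper's: paracompactness gives property (*), and the conclusion is to come from Proposition~\ref{proposition-1} once (i) and (ii) are checked, using exactly the facts the paper cites, namely that a space with a quasi-$G_\delta$-diagonal has all compact subsets $G_\delta$ and all countably compact subsets (compact and) metrizable (\cite[Proposition 2.3]{BBL2006}), and that a Lindel\"of $p$-space with a quasi-$G_\delta$-diagonal is metrizable (\cite[Corollary 3.6]{Ho1974}). The problem is the step you yourself flag as the principal obstacle: as written it is a genuine gap. You insist on producing $U(y)$ open in $bG$ and then ``transferring'' the quasi-$G_\delta$-diagonal information from $V_y=U(y)\cap Y$ to all of $U(y)$. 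The sketch offered (the $G$-part of a compact $K\subset U(y)$ has empty interior, plus density of $G$) does not achieve this: a compact, countably compact, or Lindel\"of $p$-subspace of $U(y)$ may lie entirely inside $G$, and nowhere local compactness of $G$ only says such a set has empty interior in $bG$; it yields neither metrizability nor a $G_\delta$-representation, since the quasi-$G_\delta$-diagonal lives on a piece of $Y$ disjoint from it. So conditions (i) and (ii) for $bG$-neighborhoods are not verified by your argument.

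The gap disappears once you read Proposition~\ref{proposition-1} (and Lemma~\ref{lemma-1}, which is modelled on \cite[Lemma 2]{Liu2009}) the way the paper applies it: $U(y)$ is an open neighborhood of $y$ in the remainder $Y$, not in $bG$; this is consistent with how (i) and (ii) are actually used in the proof of the proposition, where they are invoked only for subsets of $Y$ (singletons of $Y$ being $G_\delta$, the compact set $F\subset Y$ covered by finitely many such neighborhoods, $Cl_Y(O_y)$ a Lindel\"of $p$-subspace, etc.). With that reading, simply take $U(y)\subset Y$ to be the neighborhood with a quasi-$G_\delta$-diagonal furnished by the hypothesis; since a quasi-$G_\delta$-diagonal is hereditary, \cite[Proposition 2.3]{BBL2006} and \cite[Corollary 3.6]{Ho1974} give (i) and (ii) immediately, no transfer is needed, and Proposition~\ref{proposition-1} together with property (*) from paracompactness finishes the proof --- which is exactly the paper's short argument.
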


\begin{proof}
 By \cite[Proposition 2.3]{BBL2006}, for $y\in Y$, there exists an open neighborhood $U(y)$ such that each compact subset of $U(y)$ is a $G_\delta$-set and every countably compact subset of $U(y)$ is metrizable. Moreover, every Lindel\"of $p$-subspace of $U(y)$ is metrizable by \cite[Corollary 3.6]{Ho1974}. Then $G$ and $bG$ are separable and metrizable by Proposition ~\ref{proposition-1}
\end{proof}

\begin{corollary}\cite{A2009}
Let $G$ be a non-locally compact, paracompact rectifiable space, and
$Y=bG\setminus G$ have a $G_\delta$-diagonal. Then $G$ and $bG$ are
separable and metrizable.
\end{corollary}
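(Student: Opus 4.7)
The plan is to deduce this corollary directly from the preceding theorem by checking that having a $G_\delta$-diagonal is a strictly stronger hypothesis than having a locally quasi-$G_\delta$-diagonal. In particular, no new topological machinery is required: the corollary is essentially just an ``unpacking'' of the theorem under the more familiar hypothesis.

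First I would recall the implication noted just before the theorem, namely that any space with a $G_\delta$-diagonal automatically has a quasi-$G_\delta$-diagonal: if $\{\mathcal{G}(n)\}$ is a sequence of open covers witnessing the $G_\delta$-diagonal (so $\bigcap_n\operatorname{st}(x,\mathcal{G}(n))=\{x\}$ for every $x$), then for any two distinct points $x,y$ there is an $n$ with $y\notin\operatorname{st}(x,\mathcal{G}(n))$, which is exactly the quasi-$G_\delta$-diagonal property.

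Next I would observe that the $G_\delta$-diagonal property is hereditary: if $Y$ has a $G_\delta$-diagonal, then every subspace of $Y$ does as well, since the restriction of any open cover of $Y$ to a subspace remains an open cover in the subspace topology and the diagonal of the subspace is the intersection of the diagonal of $Y$ with the product of the subspace with itself. Consequently, for every $y\in Y$ and every open neighborhood $U(y)$ of $y$ in $Y$, the subspace $U(y)$ has a $G_\delta$-diagonal, and hence by the previous paragraph a quasi-$G_\delta$-diagonal. This shows that $Y$ locally has a quasi-$G_\delta$-diagonal in the sense used by the preceding theorem.

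Finally, I would apply the preceding theorem to $G$ and $bG$: the hypotheses ``$G$ is a non-locally compact, paracompact rectifiable space'' are preserved verbatim, and we have just verified the locally-quasi-$G_\delta$-diagonal assumption on $Y=bG\setminus G$. The theorem's conclusion is precisely that $G$ and $bG$ are separable and metrizable, which is what the corollary asserts. There is no real obstacle here; the only thing to watch out for is to explicitly invoke hereditariness of the $G_\delta$-diagonal so that the ``local'' version of the weaker property is genuinely available on a neighborhood base of every point of $Y$.
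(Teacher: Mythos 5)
Your proposal is correct and matches the paper's intended argument: the corollary is deduced immediately from the preceding theorem via the remark that a $G_\delta$-diagonal implies a quasi-$G_\delta$-diagonal (your extra observation about hereditariness just makes the ``locally'' hypothesis explicit). Nothing further is needed.
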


\begin{proposition}\label{proposition-2}
Let $G$ be a non-locally compact rectifiable space. If for each $y\in Y=bG\setminus G$, there exists an open neighborhood $U(y)$ of $y$ such that (i) $\pi\chi (U(y))\leq \omega$; (ii) every countably compact or Lindel\"of $p$-subspace of $U(y)$ is metrizable; (iii) every compact subset of $U(y)$ is a $G_\delta$-set of $U(y)$. Then $G, bG$ are separable and metrizable.
\end{proposition}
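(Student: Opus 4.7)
My plan is to adapt the proof of Proposition~\ref{proposition-1} almost verbatim, with hypothesis (i) replacing the detour through property~(*) and \cite[Theorem 4]{A20091} that was used there to secure countable $\pi$-character of $Y$. The upshot is that hypotheses (i) and (ii) of the present statement are precisely the hypotheses of Lemma~\ref{lemma-1}, so that lemma can be applied directly without any case distinction.

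First I would argue, exactly as in Proposition~\ref{proposition-1}, that $Y$ is not locally countably compact: a countably compact neighborhood in $Y$ would by (ii) be metrizable and hence compact, which together with the density of $G$ in $bG$ would force $G$ to be locally compact, contradicting the hypothesis. Applying \cite[Theorem 3.1]{A2009} then splits the remaining work into two cases---$Y$ pseudocompact or $Y$ Lindel\"of---and in either case Lemma~\ref{lemma-1} already yields that $G$ is metrizable and locally separable.

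The pseudocompact case is quickly ruled out: since $G$ is metrizable it is of countable type, so by \cite{HI1958} the remainder $Y$ is Lindel\"of; a Lindel\"of pseudocompact space is compact, making $G$ locally compact, a contradiction. In the Lindel\"of case I would write $G = \bigoplus_{\alpha\in A} G_{\alpha}$ as a topological sum of separable metrizable summands and reproduce the closing argument of Proposition~\ref{proposition-1}: let $F \subset Y$ be the compact set of points at which $\{G_{\alpha}\}$ fails to be locally finite; cover $F$ by finitely many $U(y_i)$ so that by (iii) $F$ is a $G_\delta$-subset of their union; and decompose $Y\setminus F = \bigcup_n K_n$ into closed Lindel\"of locally-separable-metrizable pieces, each with a countable network. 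The set $F$ inherits a countable network from (ii) applied to the chosen finite cover, so $Y$, and therefore $bG$, have countable networks; hence $c(G)\leq c(bG)\leq \omega$, the metrizable $G$ is separable, and $bG$ is separable and metrizable.

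I expect no genuinely new obstacle beyond the bookkeeping for $F$ and $Y\setminus F$, which is inherited unchanged from Proposition~\ref{proposition-1}; the substantive simplification provided by hypothesis (i) is exactly that Lemma~\ref{lemma-1} applies on the nose, eliminating the weakly-HN-complete and property-(*) preliminaries needed in that earlier proof.
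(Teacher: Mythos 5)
Your proposal is correct and is essentially the paper's own proof: the paper simply invokes Lemma~\ref{lemma-1} (whose hypotheses are exactly (i) and (ii)) to get that $G$ is metrizable and locally separable, and then repeats the closing argument of Proposition~\ref{proposition-1}, just as you do.
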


\begin{proof}
By Lemma ~\ref{lemma-1}, $G$ is metrizable and locally separable. Similar to the proof of Proposition ~\ref{proposition-1}, $G$ and $bG$ are separable and metrizable.
\end{proof}

A space with point-countable base satisfies (i), (ii) \cite[Corollary 7.11(ii)]{Gr1984} and (iii) in Proposition~\ref{proposition-2}.

\begin{corollary}
Let $G$ be a non-locally compact rectifiable space, and
$Y=bG\setminus G$ have locally point-countable base. Then $G$ and $bG$ are
separable and metrizable.
\end{corollary}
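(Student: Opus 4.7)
The plan is a direct reduction to Proposition~\ref{proposition-2}. Given any $y\in Y$, the hypothesis that $Y$ has a locally point-countable base lets me pick an open neighborhood $U(y)$ of $y$ in $Y$ that itself admits a point-countable base; I would then verify that such a $U(y)$ satisfies each of the conditions (i)--(iii) required by Proposition~\ref{proposition-2}, and invoke that proposition to conclude that $G$ and $bG$ are separable and metrizable. This is exactly the strategy flagged in the remark preceding the statement, where it is asserted that a space with a point-countable base satisfies (i), (ii), and (iii).

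The verification of (i) is elementary: if $\mathcal{B}$ is a point-countable base of $U(y)$, then for each $x\in U(y)$ the subfamily $\{B\in\mathcal{B}:x\in B\}$ is a countable local base, hence a countable local $\pi$-base at $x$, so $\pi\chi(U(y))\leq\omega$. For (iii), I would invoke Mi\v{s}\v{c}enko's classical theorem: every compact subset $F$ of a space with a point-countable base admits a countable outer base, hence is a $G_\delta$-subset. For (ii), the same theorem shows that every countably compact subspace of $U(y)$ is in fact compact and metrizable, while a Lindel\"of $p$-subspace with a point-countable base is metrizable by \cite[Corollary 7.11(ii)]{Gr1984}, as cited in the remark immediately above the statement.

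With (i)--(iii) established for each $U(y)$, Proposition~\ref{proposition-2} applies directly to $G$ and yields that both $G$ and $bG$ are separable and metrizable. There is no real obstacle here; the only point requiring any care is (ii), which decomposes into a countably compact case and a Lindel\"of $p$ case, each handled by a distinct classical consequence of having a point-countable base. All ingredients are standard, and the corollary is essentially a bookkeeping consequence of Proposition~\ref{proposition-2} together with the cited base-theoretic facts.
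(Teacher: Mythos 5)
Your proposal is correct and follows exactly the paper's route: the corollary is stated as an immediate consequence of Proposition~\ref{proposition-2} together with the preceding remark that a space with a point-countable base satisfies (i)--(iii), and you have simply supplied the standard justifications (countable local bases give $\pi\chi\leq\omega$, Mi\v{s}\v{c}enko's lemma gives countable outer bases for compact sets and metrizability of countably compact subspaces, and \cite[Corollary 7.11(ii)]{Gr1984} handles Lindel\"of $p$-subspaces). Nothing further is needed.
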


By \cite[Proposition 2.1]{BBL2006} and \cite[Corollary 8.3(ii)]{Gr1984}, a space with a $\delta\theta$-base\footnote{a {\it $\delta\theta$-base} for a space $X$ is a base $\mathcal{B}=\cup\{\mathcal{B}(n): n\geq 1\}$ with the additonal property that $U$ is open and $x\in U$, then there is some $n=n(x, U)$ with properties that (a) some $B\in \mathcal{B}(n)$ has $x\in B\subset U$, and (b) $ord(x, \mathcal{B}(n))\leq \omega$.} satisfies (i), (ii) and (iii) in Proposition~\ref{proposition-2}.

\begin{corollary}
Let $G$ be a non-locally compact rectifiable space, and
$Y=bG\setminus G$ have locally $\delta\theta$-base. Then $G$ and $bG$ are
separable and metrizable.
\end{corollary}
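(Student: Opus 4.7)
The plan is to deduce this corollary directly from Proposition~\ref{proposition-2} by verifying that its three local hypotheses (i), (ii), (iii) are implied by the assumption that $Y=bG\setminus G$ has a locally $\delta\theta$-base. This is exactly the reduction pattern already used to derive the previous corollary about locally point-countable bases from the same proposition, so the main work is bookkeeping with the cited references rather than a new construction.

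First I would fix an arbitrary $y\in Y$ and, using the local hypothesis, pick an open neighborhood $U(y)$ of $y$ (in $Y$) which carries a $\delta\theta$-base. It is enough to check that $U(y)$ satisfies the three properties required by Proposition~\ref{proposition-2}, since the assumption that $G$ is non-locally compact is already in both statements. By \cite[Proposition 2.1]{BBL2006}, a space with a $\delta\theta$-base has countable $\pi$-character and the property that every compact subset is a $G_\delta$-set; this gives (i) and (iii) immediately for $U(y)$. By \cite[Corollary 8.3(ii)]{Gr1984}, a Lindel\"of $p$-space with a $\delta\theta$-base is metrizable, and analogously a countably compact subspace of a space with a $\delta\theta$-base is metrizable (a countably compact space with a $\delta\theta$-base has a $G_\delta$-diagonal, hence is metrizable). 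Since the restriction of a $\delta\theta$-base to a subspace is again a $\delta\theta$-base, this yields (ii) for $U(y)$.

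With (i), (ii), (iii) verified, I would invoke Proposition~\ref{proposition-2} directly to conclude that $G$ and $bG$ are separable and metrizable, which is the claim.

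I do not expect a real obstacle here: the corollary is a straightforward specialization, and the only minor care needed is to make sure the two cited lemmas from \cite{BBL2006} and \cite{Gr1984} really do cover both ``countably compact'' and ``Lindel\"of $p$'' in (ii); if the reference only covers Lindel\"of $p$-spaces explicitly, I would supplement it with the standard observation that a $\delta\theta$-base yields a $G_\delta$-diagonal, so that countably compact subspaces are metrizable by Chaber's theorem. Once that is said, the proof reduces to a single sentence of the form ``apply Proposition~\ref{proposition-2}.''
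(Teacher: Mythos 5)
Your proposal is correct and follows essentially the same route as the paper: the paper's (implicit) proof is precisely the remark that, by \cite[Proposition 2.1]{BBL2006} and \cite[Corollary 8.3(ii)]{Gr1984}, a space with a $\delta\theta$-base satisfies conditions (i), (ii), (iii), so the corollary is just Proposition~\ref{proposition-2} applied to the neighborhoods $U(y)$ carrying $\delta\theta$-bases. Your contingency remark is unnecessary (and ``$\delta\theta$-base implies $G_\delta$-diagonal'' is doubtful in general), since the cited result of \cite{Gr1984} already gives metrizability of countably compact subspaces with a $\delta\theta$-base.
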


By \cite[Corollary 10.7(ii), Theorem 10.6]{Gr1984}, a $\gamma$-space\footnote{A space $(X, \tau)$ is a {\it $\gamma$-space} if there exists a
function $g:\omega\times X\to \tau$ such that (i) $\{g(n, x): n\in
\omega\}$ is a base at $x$; (ii) for each $n\in \omega$ and $x\in
X$, there exists $m\in \omega$ such that $y\in g(m, x)$ implies
$g(m, y)\subset g(n, x)$.} satisfies (i), (ii) and (iii) in Proposition~\ref{proposition-2}.

\begin{corollary}
Let $G$ be a non-locally compact rectifiable space, and
$Y=bG\setminus G$ be a locally $\gamma$-space. Then $G$ and $bG$ are
separable and metrizable.
\end{corollary}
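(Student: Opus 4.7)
The plan is to reduce the statement to Proposition~\ref{proposition-2} by verifying, via the reference \cite[Corollary 10.7(ii), Theorem 10.6]{Gr1984} cited just before the corollary, that every $\gamma$-space satisfies the three local hypotheses (i)--(iii) on $U(y)$. Since $G$ is a non-locally compact rectifiable space, the only thing left to check is the local structure of $Y=bG\setminus G$ near each point $y\in Y$.

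First, I would fix $y\in Y$ and use the assumption that $Y$ is locally a $\gamma$-space to choose an open neighborhood $U(y)$ of $y$ in $Y$ that is itself a $\gamma$-space. Since the property of being a $\gamma$-space is hereditary with respect to open subspaces (via the restriction of the $\gamma$-function $g:\omega\times X\to\tau$), every open subset of $U(y)$ is again a $\gamma$-space; in particular one may shrink $U(y)$ freely if desired.

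Next, I would invoke the two results from Gruenhage's handbook chapter cited in the paragraph preceding the corollary: \cite[Corollary 10.7(ii)]{Gr1984} and \cite[Theorem 10.6]{Gr1984}. These tell us that a $\gamma$-space has countable $\pi$-character at every point, that every countably compact (and every Lindel\"of $p$-) subspace of a $\gamma$-space is metrizable, and that every compact subset of a $\gamma$-space is a $G_\delta$-set. Applied to $U(y)$, these three consequences are precisely conditions (i), (ii) and (iii) in the hypothesis of Proposition~\ref{proposition-2}. Thus the neighborhood $U(y)$ meets all the requirements of that proposition.

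Finally, with hypotheses (i)--(iii) verified at every $y\in Y$, Proposition~\ref{proposition-2} applies directly and yields that both $G$ and $bG$ are separable and metrizable. There is no real obstacle here: the only nontrivial content lies in knowing that a $\gamma$-space satisfies (i)--(iii), and that is exactly what was quoted from \cite{Gr1984} just before the statement. So the proof is essentially a one-line application of Proposition~\ref{proposition-2} once the Gruenhage citations are invoked locally at each point of the remainder.
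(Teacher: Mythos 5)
Your proposal is correct and matches the paper's own treatment: the paper simply records (citing \cite[Corollary 10.7(ii), Theorem 10.6]{Gr1984}) that a $\gamma$-space satisfies conditions (i)--(iii) of Proposition~\ref{proposition-2} and then derives the corollary immediately from that proposition, exactly as you do by choosing at each $y\in Y$ an open $\gamma$-space neighborhood $U(y)$ and applying the proposition.
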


{\bf Acknowledgements}. We wish to thank
the reviewers for the detailed list of corrections, suggestions to the paper, and all her/his efforts
in order to improve the paper.

\bigskip

\end{document}